\documentclass[a4paper]{article}
\usepackage{amsmath, amssymb, amsthm, stackrel, graphicx, enumerate,etoolbox,endfloat}
\usepackage{wrapfig, tikz, spath3,pgf}
\usepackage{tikz-cd,units}
\usetikzlibrary{arrows,knots}
%

\newtheorem{thma}{Theorem}

\newtheorem{thm}{Theorem}
\newtheorem{cor}[thm]{Corollary}
\newtheorem{lem}[thm]{Lemma}
\newtheorem{prop}[thm]{Proposition}
\theoremstyle{definition}

\newtheorem{df}[thm]{Definition}
\newtheorem{rmk}[thm]{Remark}

\newtheorem{notation}[thm]{Notation}
\newtheorem{dfa}[thma]{Definition}
\newtheorem{convent}[thm]{Convention}

\makeatletter
\newcommand\xleftrightarrow[2][]{%
	\ext@arrow 9999{\longleftrightarrowfill@}{#1}{#2}}
\newcommand\longleftrightarrowfill@{%
	\arrowfill@\leftarrow\relbar\rightarrow}
\makeatother

\newcommand{\rar}{\rightarrow}

\newcommand{\C}{\mathbb{C}}
\newcommand{\cat}[1]{\mathcal{#1}}

\newcommand{\id}{\textnormal{id}}

\newcommand{\End}{\textnormal{End}}

\newcommand{\ev}{\textnormal{ev}}
\newcommand{\coev}{\tn{coev}}
\newcommand{\Vect}{\mathbf{Vect}}

\newcommand{\tn}[1]{\textnormal{#1}}

\newcommand{\svec}{\mathbf{sVect}}

\newcommand{\Rep}{\textnormal{Rep}}

\newcommand{\dcentcat}[1]{\cat{Z}(\cat{#1})}

\newcommand{\SA}{\cat{O}(\cat{A})}

\title{The Symmetric Tensor Product on the Drinfeld Centre of a Symmetric Fusion Category}
\author{Thomas A. Wasserman}

\begin{document}

\maketitle

\begin{abstract}
	We define a symmetric tensor product on the Drinfeld centre of a symmetric fusion category, in addition to its usual tensor product. We examine what this tensor product looks like under Tannaka duality, identifying the symmetric fusion category with the representation category of a finite (super)-group. Under this identification, the Drinfeld centre is the category of equivariant vector bundles over the finite group (underlying the super-group, in the super case). In the non-super case, we show that the symmetric tensor product corresponds to the fibrewise tensor product of these vector bundles. In the super case, we define for each super-group structure on the finite group a super-version of the fibrewise tensor product. We show that the symmetric tensor product on the Drinfeld centre of the representation category of the resulting finite super-groups corresponds to this super-version of the fibrewise tensor product on the category of equivariant vector bundles over the finite group.
\end{abstract}

\setcounter{tocdepth}{2}
\tableofcontents

\section{Introduction}
Let $(\cat{A},\otimes)$ be a symmetric ribbon fusion category over $\C$. It is well-known \cite{Muger2003a} that its Drinfeld centre $\dcentcat{A}$ is a modular tensor category, with tensor product $\otimes_c$. By Tannaka duality \cite{Deligne1990}, there is a finite group or supergroup $G$, such that $\cat{A}\cong\tn{Rep}(G)$. With this identification, we have another description \cite[Chapter 3.2]{Bakalov2001a} of the Drinfeld centre as the category $\Vect_G[G]$ of $G$-equivariant vector bundles on $G$, equipped with the convolution tensor product. This category carries an additional tensor structure given by fibrewise tensor product (or in the super-case, a modified fibrewise tensor product, introduced in this paper), and this tensor structure is symmetric. 

Our goal is to define a symmetric tensor product
$$
\otimes_s : \dcentcat{A}\boxtimes\dcentcat{A}\rar \dcentcat{A},
$$
that is a purely categorical version of the fibrewise tensor product. We avoid using Tannaka duality in defining $\otimes_s$. In particular, this categorical description will treat the super and non-super Tannakian cases on equal footing. In the super-Tannakian case, this will lead us to define a generalisation of the fibrewise tensor product to equivariant vector bundles over a super-group. 

We will show in a follow-up paper \cite{Wasserman2017a} that the symmetric tensor product $\otimes_s$ together with the usual tensor product $\otimes_c$ makes the Drinfeld centre into a bilax 2-fold tensor category. That is, there are morphisms between $(z \otimes_s z')\otimes_c(y\otimes_s y')$ and $(z \otimes_c y)\otimes_s(z' \otimes_c y')$ for all $z, z',y,y'\in \dcentcat{A}$ satisfying coherence conditions. This in turn will be used to develop the theory of $\dcentcat{A}$-crossed tensor categories \cite{Wasserman2017b}. These can be seen as a groupoid equivalence invariant version of so-called $G$-crossed braided tensor categories, and they play a central role in defining the reduced tensor product \cite{Wassermanc}. This reduced tensor product gives a way of tensoring two braided tensor categories containing $\cat{A}$ together to produce a braided tensor category containing $\cat{A}$. One of the threads through this series of papers is that the theory is set up independently of Tannaka duality at every stage. This ensures the reduced tensor product only depends on the braiding and tensor product in the braided tensor categories, and can be computed in terms of these.

Outside of this series of papers, in parallel work Tham \cite{Tham2019} developed the closely related notion of a reduced tensor product on the Drinfeld Centre of a braided fusion category. This reduced tensor product corresponds to the stacking of annuli in a four-dimensional Crane-Yetter Topological Quantum Field Theory. In the case where the braided fusion category is symmetric, this construction recovers the symmetric tensor product discussed here.

To define $z \otimes_s z'$ we take the subobject of $z\otimes_c z'$ associated to the idempotent given by
$$
\Pi_{z,z'}=
\hbox{
	\begin{tikzpicture}[baseline=(current  bounding  box.center)]
	\coordinate (west) at (-1,0);
	\coordinate (north) at (0,0.5);
	\coordinate (east) at (1,0);
	\coordinate (south) at (0,-0.4);
	\node (a) at (-0.5,-1.2) {$z$};
	\node (b) at (0.5,-1.2) {$z'$};
	\coordinate (ao) at (-0.5,1);
	\coordinate (bo) at (0.5,1);
	
	\begin{knot}[clip width=4]
	\strand [blue, thick] (west)
	to [out=90,in=-180] (north)
	to [out=0,in=90] (east)
	to [out=-90,in=0] (south)
	to [out=-180,in=-90] (west);
	\strand [thick] (a) to (ao);
	\strand [thick] (b) to (bo);
	\flipcrossings{1,4}
	\end{knot}
	\end{tikzpicture}
}.
$$
The ring represents a sum over representatives for the isomorphism classes of simple objects of $\cat{A}\subset\dcentcat{A}$, the under- and over-crossings represent half-braidings in $\dcentcat{A}$. The alternating appearance of the crossings along the ring ensures the idempotent picks out the subobject of $z\otimes_c z'$ on which the half-braidings for $z\otimes_c z'$ obtained by using the symmetry in $\cat{A}$ and either the half-braiding of $z$ or the half-braiding of $z'$ agree. We then equip this subobject with either one of these the half-braidings, and define this to be the symmetric tensor product $z \otimes_s z'$ of $z$ and $z'$.

The outline of this paper is as follows. In Section \ref{STsetupsect} we recall the definition of the Drinfeld centre, and introduce some notation and useful lemmas about subobjects in idempotent complete categories and string diagrams. Then, in Section \ref{STsymtenssector}, we will define the symmetric tensor product on $\dcentcat{A}$. We will do this in two parts. First we will define the operation $\otimes_s$ on objects, and establish the associators, unit object and unitors, and symmetry objectwise. Secondly, we define $\otimes_s$ on morphisms and show that this definition makes $(\dcentcat{A},\otimes_s)$ into a symmetric monoidal category. This is our main result:

\begin{thma}[Theorem \ref{STsymtensmainthrm}]
	$(\dcentcat{A},\otimes_s,\mathbb{I}_s)$ is a symmetric monoidal category.
\end{thma}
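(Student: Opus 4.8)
The plan is to transport the entire symmetric monoidal structure from the known braided monoidal category $(\dcentcat{A},\otimes_c)$ onto $\otimes_s$ by restricting along the idempotents $\Pi_{a,b}$. Since $\dcentcat{A}$ is idempotent complete (being a fusion category), each $a\otimes_s b$ comes equipped with an inclusion $\iota_{a,b}\colon a\otimes_s b\inc a\otimes_c b$ and a projection $p_{a,b}\colon a\otimes_c b\rar a\otimes_s b$ satisfying $p_{a,b}\iota_{a,b}=\id$ and $\iota_{a,b}p_{a,b}=\Pi_{a,b}$. The governing principle, supplied by the subobject lemmas of Section \ref{setupsect}, is that a morphism of the ambient $\otimes_c$-objects descends to the $\otimes_s$-subobjects exactly when it intertwines the defining idempotents, and that two morphisms between subobjects coincide iff they agree after composing with the inclusions; the latter holds because the $\iota_{a,b}$ are monic.

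Granting the objectwise constructions of the associators, unit $\mathbb{I}_s$, unitors and symmetry from the first half of Section \ref{symtenssector}, together with the pentagon, triangle and hexagon identities verified there for fixed objects, the remaining content of the theorem is to promote $\otimes_s$ to a bifunctor and to check that these structure isomorphisms are natural. For morphisms $f\colon a\rar a'$ and $g\colon b\rar b'$ in $\dcentcat{A}$ I would set $f\otimes_s g := p_{a',b'}\circ(f\otimes_c g)\circ\iota_{a,b}$. The first thing to verify is that $f\otimes_c g$ intertwines $\Pi_{a,b}$ and $\Pi_{a',b'}$: this is exactly where one uses that $f$ and $g$, being morphisms in the Drinfeld centre, commute with every half-braiding, so they slide through the crossings appearing in $\Pi$, while naturality of the sum over simples lets them pass the ring. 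Functoriality (preservation of composition and identities) is then immediate from $p\iota=\id$, $\iota p=\Pi$ and this intertwining.

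With $\otimes_s$ established as a bifunctor, naturality of the associators, unitors and symmetry reduces, after composing with the monic inclusions $\iota$, to the naturality of the corresponding structure morphisms for $\otimes_c$, which holds because $(\dcentcat{A},\otimes_c)$ is braided monoidal. The same device handles coherence: each objectwise coherence identity for $\otimes_s$ becomes, after whiskering by inclusions into the ambient $\otimes_c$-objects, the corresponding identity for $\otimes_c$ restricted to a subobject, so the pentagon and triangle axioms follow formally from those of $\otimes_c$.

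The main obstacle is the symmetry, the one piece that cannot be inherited verbatim, since $(\dcentcat{A},\otimes_c)$ is only braided rather than symmetric. One must produce $\sigma_{a,b}\colon a\otimes_s b\rar b\otimes_s a$ and prove $\sigma_{b,a}\circ\sigma_{a,b}=\id$ together with the hexagon. Here the defining feature of $\Pi_{a,b}$ is decisive: the alternating over- and under-crossings force the subobject to be the locus on which the half-braiding built from $a$'s structure and the one built from $b$'s structure (each mediated by the symmetry of $\cat{A}$) agree. I expect the crux to be a diagrammatic lemma showing that, once restricted to the image of $\Pi$, the relevant braiding isomorphism squares to the identity; establishing this will require carefully sliding the half-braidings past the ring and invoking the symmetry of $\cat{A}$, and it is the step where the whole construction genuinely depends on $\cat{A}$ being symmetric rather than merely braided.
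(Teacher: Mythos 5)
Your overall architecture matches the paper's: the same definition $f\otimes_s g = p_{a',b'}\circ(f\otimes_c g)\circ\iota_{a,b}$ on morphisms, the same functoriality argument via the intertwining $\,(f\otimes_c g)\circ\Pi_{a,b}=\Pi_{a',b'}\circ(f\otimes_c g)$, and the same plan of inheriting pentagon, triangle and hexagon from $\cat{A}$. You also correctly locate where the symmetry of $\cat{A}$ is indispensable. However, there is one genuine gap in the transport strategy: you treat $a\otimes_s b$ as \emph{the subobject of $a\otimes_c b$ in $\dcentcat{A}$}, with $\iota_{a,b}$ and $p_{a,b}$ monic and epic morphisms \emph{of the Drinfeld centre}. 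That is not what the construction is. By Definition \ref{symtensobj}, $a\otimes_s b$ is the subobject of $\Phi(a)\otimes\Phi(b)$ taken in $\cat{A}$ and then equipped with a \emph{new} half-braiding (Equation \eqref{halfbraid}), which braids a strand past $a$ using $a$'s half-braiding and past $b$ using only the symmetry of $\cat{A}$ (or vice versa — Lemma \ref{keyprop} says these agree on the image of $\Pi$, but neither agrees with the half-braiding of $a\otimes_c b$; the Tannakian computation makes this concrete, as the fibre $V_g\otimes W_g$ sits over $g^2$ in $V\otimes_c W$ but over $g$ in $V\otimes_s W$). Consequently the inclusion and projection are \emph{not} morphisms in $\dcentcat{A}$ — the paper's slicing relation (Lemma \ref{slicing}) records exactly how they fail to commute with the half-braidings.

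This breaks two of your reductions. First, "two morphisms between subobjects coincide iff they agree after the inclusions" is only an equality test in $\cat{A}$; it cannot certify that a candidate structure map is a morphism of $\dcentcat{A}$, i.e.\ commutes with the new half-braidings. Second, naturality and well-definedness of the associators, unitors and symmetry do not "follow formally from $\otimes_c$", because these maps are conjugates by non-equivariant inclusions/projections and their targets carry half-braidings that are not restricted from $\otimes_c$. The paper spends the second half of each of Lemmas \ref{symlemma}, \ref{asso} and \ref{unitisos} verifying precisely this compatibility, using cloaking (Lemma \ref{cloaking}) and slicing; the unit is a further instance, since $\mathbb{I}_s=\bigoplus_i ii^*$ with a nontrivial half-braiding is not the $\otimes_c$-unit and its unitors are built from evaluation maps rather than inherited. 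To repair your argument you would need to add, for every structure morphism (including $f\otimes_s g$ itself), a separate check that it intertwines the half-braidings of Equation \eqref{halfbraid}; this is exactly the diagrammatic work your last paragraph anticipates for the symmetry, but it is needed across the board, not only there.
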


In the final Section \ref{STtannacases}, we verify that, given a fibre functor on $\cat{A}$, the product $\otimes_s$ agrees with the fibrewise tensor product on $\Vect_G[G]$ in the Tannakian case: 

\begin{thma}[Theorem \ref{STsymprodtannafibthm}]
	Let $G$ be a finite group. Then the equivalence between $(\cat{Z}(\tn{Rep}(G)),\otimes_s)$ and $(\Vect_G[G],\otimes_f)$ is a symmetric monoidal equivalence. Here $\otimes_f$ denotes the fibrewise tensor product.
\end{thma}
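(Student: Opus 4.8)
The plan is to use the classical Tannaka equivalence $F\colon \cat{Z}(\tn{Rep}(G)) \to \Vect_G[G]$, under which an object $(M,\gamma)$ of the centre is sent to the underlying representation $M$ graded by the $G$-action encoded in its half-braiding, and to show that this same $F$ intertwines $\otimes_s$ with $\otimes_f$. Since $F$ is already known to be a braided tensor equivalence for the convolution product $\otimes_c$, and since $\otimes_s$ is built from $\otimes_c$ together with the idempotent $\Pi_{a,b}$ and a change of half-braiding, the whole argument reduces to transporting $\Pi_{a,b}$ and the re-equipping step through $F$. First I would fix explicit formulas: recall that for $V,W\in\Vect_G[G]$ one has $(V\otimes_c W)_g=\bigoplus_{g_1g_2=g}V_{g_1}\otimes W_{g_2}$ with diagonal $G$-action and braiding $c_{V,W}(v\otimes w)=(g_1\cdot w)\otimes v$ for $v\in V_{g_1}$; that the subcategory $\cat{A}=\tn{Rep}(G)$ consists of the bundles concentrated in degree $e$; and that $(V\otimes_f W)_g=V_g\otimes W_g$ with unit the constant bundle $\underline{\C}$.

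The heart of the proof is the computation of $F(\Pi_{a,b})$ as an idempotent on $V\otimes_c W$. The ring in $\Pi_{a,b}$ encircles the two strands, braiding over the $a$-strand and, because of the alternating crossings, under the $b$-strand; under $F$ this becomes, on $V_{g_1}\otimes W_{g_2}$, the partial trace over $X_i$ of the monodromy with $a$ composed with the inverse monodromy with $b$, which by the braiding formula leaves $v$ and $w$ fixed and decorates $X_i$ by $\rho_{X_i}(g_1g_2^{-1})$. Closing the loop and summing over the simples $X_i$ of $\cat{A}$ with their quantum dimensions therefore produces $\sum_i \dim(X_i)\,\chi_{X_i}(g_1g_2^{-1})$, which by column orthogonality of irreducible characters equals $|G|\,\delta_{g_1,g_2}$. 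Hence $F(\Pi_{a,b})$ is the projection of $V\otimes_c W$ onto its diagonal part $\bigoplus_g V_g\otimes W_g$.

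I would then carry out the re-equipping step, which is the conceptual crux. The idempotent identifies $a\otimes_s b$, as a plain object, with $\bigoplus_g V_g\otimes W_g$; but as a subobject of $a\otimes_c b$ this sits in convolution-degree $g^2$, whereas $\otimes_s$ equips it with the half-braiding of $a$ (equivalently of $b$, the two agreeing there precisely because of the diagonal condition). Tracing this chosen half-braiding through $F$ regrades $V_g\otimes W_g$ into degree $g$, so that $F(a\otimes_s b)=\bigoplus_g V_g\otimes W_g$ in degree $g$, which is exactly $F(a)\otimes_f F(b)$; the diagonal $G$-action and the equivariance $h\cdot(V_g\otimes W_g)\subseteq V_{hgh^{-1}}\otimes W_{hgh^{-1}}$ match on both sides, and this identification is manifestly natural in $a$ and $b$.

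It remains to promote the natural isomorphism $F(a\otimes_s b)\cong F(a)\otimes_f F(b)$ to a symmetric monoidal one. I would check the unit by identifying $F(\mathbb{I}_s)$ with the constant bundle $\underline{\C}$ (the $\otimes_f$-unit), and verify the associativity, unit, and symmetry constraints by comparing the objectwise data built in the proof of Theorem \ref{symtensmainthrm} with the evident ones on $\Vect_G[G]$, so that on the diagonal subobject the symmetry of $\otimes_s$, assembled from the symmetry of $\cat{A}$ and the braiding of $\dcentcat{A}$, reduces to the plain flip $v\otimes w\mapsto w\otimes v$ matching that of $\otimes_f$. The main obstacle is the string-diagram computation of the second paragraph: faithfully translating the alternating crossings of $\Pi_{a,b}$ through $F$ and pinning down the quantum-dimension weights so that the character sum collapses to exactly the diagonal projector, since (as one sees already for $S_3$) the unweighted sum $\sum_i\chi_{X_i}(h)$ fails to vanish. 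Once that and the attendant regrading are secured, the remaining monoidal coherence checks are routine consequences of naturality and the explicit formulas.
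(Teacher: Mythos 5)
Your proposal follows essentially the same route as the paper: compute the image of $\Pi_{a,b}$ under the equivalence as the diagonal projector via the weighted character sum $\sum_i \frac{d_i}{D}\chi_i(g_2^{-1}g_1)=\delta_{g_1,g_2}$ (the paper phrases this as recognising the character of the regular representation, your column-orthogonality argument is the same fact), identify the associated subobject as $\bigoplus_{g^2=g'}V_g\otimes W_g$, and then observe that re-equipping with the half-braiding of $a$ regrades $V_g\otimes W_g$ from convolution-degree $g^2$ to degree $g$, recovering $\otimes_f$. The remaining coherence checks you outline match (and slightly elaborate on) what the paper leaves implicit, so the proposal is correct and not materially different.
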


In the super-Tannakian case, where $\cat{A}=\Rep(G,\omega)$, we first define a new tensor product on $\Vect_G[G]$ that depends on the choice of central element $\omega$. 

\begin{dfa}[Definition \ref{STfibrewisesupertensordef}]
	Let $(G,\omega)$ be a finite super-group. The \emph{fibrewise super-tensor product} of homogeneous vector bundles $V,W\in \Vect_{G}[G]$ is the $G$-equivariant vector bundle $V\otimes_{f}^{\omega} W$ with fibres
	$$
	(V\otimes_{f}^{\omega} W)_g=V_{\omega^{|W|}g}W_{\omega^{|V|}g},
	$$
	and $G$-action given by the tensor product of the $G$-actions.	
\end{dfa}

We then show that the symmetric tensor product on $\dcentcat{A}$ is taken to this tensor product on $\Vect_{G}[G]$ under the equivalence $\dcentcat{A}\cong \Vect_{G}[G]$:

\begin{thma}[Theorem \ref{STsymtenssupertannacase}]
	Let $(G,\omega)$ be a finite super-group. Then the equivalence between $(\cat{Z}(\Rep(G,\omega)),\otimes_s)$ and $(\Vect_{G}[G],\otimes_f^{\omega})$ is symmetric monoidal.
\end{thma}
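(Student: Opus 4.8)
The plan is to build directly on the proof of Theorem \ref{symprodtannafibthm}, exploiting the fact that $\Rep(G,\omega)$ and $\Rep(G)$ have the same underlying fusion category --- and hence the same Drinfeld centre $\dcentcat{A}\cong\Vect_G[G]$, with the same convolution product $\otimes_c$ and the same half-braidings --- differing only in their symmetric braiding, where $\Rep(G,\omega)$ carries the Koszul-signed symmetry implemented by the central element $\omega$. Since the idempotent $\Pi_{a,b}$ defining $\otimes_s$ is assembled from the half-braidings of $\dcentcat{A}$ together with the symmetry of $\cat{A}$, the entire non-super computation should carry over except at the crossings that encode the symmetry of $\cat{A}$. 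First I would recall the explicit equivalence $\dcentcat{A}\cong\Vect_G[G]$, and in particular how a simple object of $\cat{A}\subset\dcentcat{A}$ and its half-braiding translate into the $G$-grading on fibres, since the ring in $\Pi_{a,b}$ is a sum over representatives of these simples.

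The heart of the argument is to recompute the subobject cut out by $\Pi_{a,b}$ after replacing the symmetry of $\Rep(G)$ by that of $\Rep(G,\omega)$. The super-symmetry differs from the ordinary flip precisely by the action of the central element $\omega$, which implements the $\mathbb{Z}/2$-grading (parity) operator. Tracking this $\omega$-twist through the two competing half-braidings that $\Pi_{a,b}$ forces to agree --- the one obtained from the symmetry together with the half-braiding of $a$, and the one from the symmetry together with the half-braiding of $b$ --- I expect the eigenvalue equation defining the diagonal subobject of $a\otimes_c b$ to acquire a shift: on homogeneous $V,W$ of parities $|V|,|W|$, the factor coming from $V$ is evaluated at $\omega^{|W|}g$ and the factor coming from $W$ at $\omega^{|V|}g$. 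This should reproduce exactly the fibres $(V\otimes_f^\omega W)_g=V_{\omega^{|W|}g}W_{\omega^{|V|}g}$ of Definition \ref{fibrewisesupertensordef}, with the $G$-action being the tensor product action inherited from $a\otimes_c b$.

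Finally I would check that the symmetric monoidal structure transports correctly: that the half-braiding with which $a\otimes_s b$ is equipped matches the equivariance data of $V\otimes_f^\omega W$, and that the associators, unitors and symmetry constructed objectwise in Section \ref{symtenssector} agree, under the equivalence, with the corresponding structure isomorphisms of $(\Vect_G[G],\otimes_f^\omega)$. Here the symmetry is the subtle comparison, since it is where the $\omega$-signs reappear explicitly.

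The main obstacle I anticipate is the $\omega$-bookkeeping in the central computation: one must verify not merely that a shift by $\omega$ appears, but that the two factors acquire the \emph{cross}-shifts $\omega^{|W|}$ and $\omega^{|V|}$ respectively (rather than a common shift), and that the alternating over/under pattern of crossings in $\Pi_{a,b}$ interacts with the $\omega$-twist so as to keep the projector idempotent and to yield a consistent grading on the output bundle. The cleanest route is to reduce to Theorem \ref{symprodtannafibthm} applied to $\Rep(G)$ and to isolate the single extra factor of $\omega$ introduced by the super-symmetry at each symmetry crossing; making this isolation precise, and confirming it lands on the correct factor, is where the real work lies.
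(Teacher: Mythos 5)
Your proposal follows essentially the same route as the paper: the paper likewise reduces to the Tannakian computation of $\Pi_{V,W}$ and of the half-braiding, isolating the extra signs $(-1)^{|V|}$, $(-1)^{|W|}$ contributed at the crossings with the odd simples $i$ (which, under the inclusion $\Rep(G,\omega)\subset\Vect_G[G]$, are supported on $[\omega]$), absorbing them into a shift of the character argument by $\omega^{|V|+|W|}$, and then recomputing the half-braiding to land on the cross-shifted fibres $(V\otimes_s W)_g=V_{\omega^{|W|}g}W_{\omega^{|V|}g}$. The $\omega$-bookkeeping you single out as the main obstacle is indeed exactly where the paper's work lies, and it comes out as you predict.
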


\section{Preliminaries}\label{STsetupsect}
\subsection{Fusion categories}
\emph{Fusion categories} are idempotent complete semi-simple rigid tensor categories with a simple unit object and a finite set of isomorphism classes of simple objects. A \emph{tensor category} is a linear ($\Vect$-enriched abelian) category equipped with a bilinear monoidal structure, for the general theory see for example \cite{Calaque2004}. Idempotent completeness is discussed below (Section \ref{STidemnotation}), \emph{semi-simplicity} requires every short exact sequence to split, and by a \emph{simple object} we mean an object with no other subobjects than the zero object and itself. 
\begin{convent}
	In this article, we will restrict ourselves to fusion categories defined over $\C$. 
\end{convent}
For more details on fusion categories, see \cite{Etingof2002}. We will in particular be interested in \emph{braided} fusion categories, i.e. fusion categories equipped with a braiding for the monoidal structure. These braided fusion categories are extensively studied in \cite{Drinfeld2009}, while the general theory of braided monoidal categories can be found in \cite{Joyal1986}. To recall, a \emph{braiding} is a monoidal natural isomorphism $\beta$ with components $\beta_{c,d}: c\otimes d \xrightarrow{\cong} d\otimes c$ between the two different possible orderings of the objects in the monoidal structure. The monoidality of this natural isomorphism is captured by a family of equations known as the hexagon equations, see \cite{Joyal1986}. A braided monoidal category is called \emph{symmetric} if $\beta_{c,d}^{-1}=\beta_{d,c}$ for all objects $c$ and $d$.

\subsection{The Drinfeld Centre}
We recall the definition of the Drinfeld centre of a monoidal category (introduced in \cite{Majid1991,Joyal1991a}, see \cite[Definition 4.1]{Kassel1995} for more details) for convenience.
\begin{df}\label{STdrinfeldcenterdef}
	Let $\cat{A}$ be a monoidal category. The \emph{Drinfeld centre $\cat{Z}(\cat{A})$ of $\cat{A}$} is the braided monoidal category with objects pairs $(a,\beta)$, where $a$ is an object of $\cat{A}$ and $\beta$ is a natural isomorphism
	$$
	\beta: - \otimes a \Rightarrow a\otimes -,
	$$
	referred to as a \emph{half-braiding}. The $\beta$ are further required to satisfy 
	\begin{equation}\label{SThalfbraidreq}
	\beta_{bb'}= ( \beta_{b}\otimes\id_{b'})\circ( \id_{b}\otimes\beta_{b'}),
	\end{equation}
	for all $b,b'\in\cat{A}$, where we have suppressed the associators in $\cat{A}$. This condition is also sometimes called the hexagon equation.
	
	The morphisms in $\cat{Z}(\cat{A})$ are those morphisms in $\cat{A}$ that commute with the half-braidings in the obvious way. The tensor product is induced from the one on $\cat{A}$ and the braiding is the one specified by the half-braidings. 
	
	The Drinfeld centre comes with a \emph{forgetful functor} $\Phi:\dcentcat{A}\rar \cat{A}$, which forgets the half-braiding. This functor is monoidal.
\end{df}

It is well-known (\cite{Etingof2002}) that the centre of a fusion category is again fusion.

If $\cat{A}$ is braided, there is an obvious inclusion functor
\begin{equation}\label{STinclusionbrtodrinfeld}
\cat{A}\subset \dcentcat{A},
\end{equation}
which takes an object $a\in\cat{A}$ to $(a,\beta_{-,a})$, where $\beta_{-,a}$ denotes the natural isomorphism between $-\otimes a$ and $a\otimes -$ given by the braiding in $\cat{A}$.

\subsection{Notation}
We remind the reader that we use $\cat{A}$ to denote the symmetric tensor category, with tensor product $\otimes_\cat{A}$, on the Drinfeld centre $\dcentcat{A}$ of which we want to define a second tensor product. We will denote the usual tensor product on $\dcentcat{A}$ from Definition \ref{STdrinfeldcenterdef} by $\otimes_c$. Throughout, we will suppress the associators of $\cat{A}$ (and hence of $\dcentcat{A}$). We will suppress the symbols $\otimes_\cat{A}$ and $\otimes_c$ for the tensor product on $\cat{A}$ and $\dcentcat{A}$, respectively, when there is no risk of confusion.

\subsubsection{String Diagrams}
This article makes heavy use of string diagram calculus. String diagrams are a powerful tool to do computations in a braided monoidal category \cite{Joyal1991}, and have for example been used to give efficient proofs of theorems about fusion categories \cite{Bartlett2015}. If the reader is not familiar with this calculus, we can recommend \cite{Bartlett2009, Selinger2010}, on top of the aforementioned references. We give a very brief introduction here.

The central idea of string diagram calculus is that morphisms in a braided monoidal category can be represented by diagrams where we read composition bottom to top and tensoring left to right, while labels at the top and bottom of the diagram tell us which objects are involved. For example, with $f_1\colon a_1\rar b_1$, $f_2\colon a_2 \rar b_2$, and $g\colon b_1 \otimes b_2 \rar c$, we have the representations:
$$
g\circ(f_1\otimes f_2) =
\hbox{
	\begin{tikzpicture}[baseline=(current  bounding  box.center)]
	\node (a1) at (0,0){$a_1$};
	\node (a2) at (1, 0){$a_2$};
	
	\node (f1) at (0,1)[draw]{$f_1$};
	\node (f2) at (1,1)[draw]{$f_2$};
	
	\node (b1) at (-0.2,1.6){$b_1$};
	\node (b2) at (1.2, 1.6){$b_2$};
	
	\node (g) at (0.5,2.1)[draw]{$g$};
	
	\node (c) at (0.5,2.7){$c$};
	
	\begin{knot}[clip width=4]
		\strand [thick] (a1) to [out=90,in=-90] (f1);
		\strand [thick] (a2) to [out=90,in=-90] (f2);
		\strand [thick] (f1) to [out=90,in=-120] (g);
		\strand [thick] (f2) to [out=90,in=-60] (g);
		\strand [thick] (g) to (c);
	\end{knot}
\end{tikzpicture}
}=
\hbox{
	\begin{tikzpicture}[baseline=(current  bounding  box.center)]
	\node (a1) at (0,0){$a_1$};
	\node (a2) at (1, 0){$a_2$};

	\node (g) at (0.5,1.5)[draw]{$g\circ(f_1\otimes f_2)$};
	
	\node (c) at (0.5,2.7){$c$};
	
	\begin{knot}[clip width=4]
		\strand [thick] (a1) to [out=90,in=-120] (g);
		\strand [thick] (a2) to [out=90,in=-60] (g);
		\strand [thick] (g) to (c);
	\end{knot}
\end{tikzpicture}
}.
$$
Here the second equality illustrates one of the relations that we have between these string diagrams: if we can get from one to the other by composing the morphisms corresponding to the coupons, they represent the same morphism. In the braided setting, with the conventions we will introduce below in Section \ref{braidingconventionsection}, these diagrams can further be thought of as representing strings living in three dimensions. Two such diagrams represent the same morphism in the braided monoidal category if they represent the same string configuration, and this relation can be expressed in terms of Reidemeister moves, just like for knot diagrams.

The identity on the monoidal unit $\mathbb{I}$ is represented by the empty string diagram. For an object $a$ with a right dual $(a^*,\ev,\coev)$, we represent $\ev: a\otimes a^* \rar \mathbb{I}$ by an arc $\cap$ and $\coev:\mathbb{I}\rar a^*\otimes a$ by an oppositely oriented arc $\cup$. These satisfy the so-called snake identities:
\begin{equation}\label{STsnakes}
\hbox{
	\begin{tikzpicture}[baseline=(current  bounding  box.center)]

	\node (a) at (1, 0){$a$};

	\coordinate (ao) at ( 1,1);
	\begin{knot}[clip width=4]
	\strand [thick] (a) to [out=90,in=-90] (ao);
	\end{knot}
	\end{tikzpicture}
}
=
\hbox{
	\begin{tikzpicture}[baseline=(current  bounding  box.center)]
	\node (a) at (0, 0){$a$};
	
	\coordinate (lt) at (0.75,0.25);
	
	\coordinate (ut) at (0.25,0.75);
	\coordinate (ao) at ( 1,1);
	\begin{knot}[clip width=4]
	\strand [thick] (a) to [out=90,in=-180] (ut) to [out=0,in=180] (lt) to [out=0,in=-90](ao);
	\end{knot}
	\end{tikzpicture}
}
\mbox{ and }
\hbox{
	\begin{tikzpicture}[baseline=(current  bounding  box.center)]
	
	\node (a) at (1, 0){$a^*$};

	\coordinate (ao) at ( 1,1);
	\begin{knot}[clip width=4]
	\strand [thick] (a) to [out=90,in=-90] (ao);
	\end{knot}
	\end{tikzpicture}
}
=
\hbox{
	\begin{tikzpicture}[baseline=(current  bounding  box.center)]
	\node (a) at (0, 0){$a^*$};
	
	\coordinate (lt) at (-0.75,0.25);
	
	\coordinate (ut) at (-0.25,0.75);
	\coordinate (ao) at (-1,1);
	\begin{knot}[clip width=4]
	\strand [thick] (a) to [out=90,in=0] (ut) to [out=180,in=0] (lt) to [out=180,in=-90](ao);
	\end{knot}
	\end{tikzpicture}
},
\end{equation}
corresponding to obvious isotopies of the strings.

\subsubsection{Braiding Conventions}\label{braidingconventionsection}
When drawing string diagrams in $\dcentcat{A}$ we will use the convention that crossings correspond to braiding according to the half-braiding of the over-crossing object. That is, if $(a,\beta)\in\dcentcat{A}$, with $\beta: -\otimes a \Rightarrow a\otimes-$, and $c\in \dcentcat{A}$, we will denote:
$$
\beta_c=
\hbox{
	\begin{tikzpicture}[baseline=(current  bounding  box.center)]
	\node (c) at (0.5,0){$c$};
	\node (a) at (1, 0){$a$};

	\coordinate (co) at (1,1);
	\coordinate (ao) at ( 0.5,1);
	\begin{knot}[clip width=4]
	\strand [thick] (a) to [out=90,in=-90] (ao);
	\strand [thick] (c) to [out=90,in=-90] (co);
	\end{knot}
	\end{tikzpicture}
}
.
$$ 
Unresolved crossings will denote the use of the symmetry $s$ in $\cat{A}$. So for $(a,\beta),(a',\beta')\in\dcentcat{A}$,
$$
s_{a',a}=:
\hbox{
	\begin{tikzpicture}[baseline=(current  bounding  box.center)]
	\node (c) at (0.5,0){$a'$};
	\node (a) at (1, 0){$a$};

	\coordinate (co) at (1,1);
	\coordinate (ao) at ( 0.5,1);
	\begin{knot}[clip width=4]
	\strand [thick] (a) to [out=90,in=-90] (ao);
	\end{knot}
	\draw [thick] (c) to [out=90,in=-90] (co);
	\end{tikzpicture}
}.
$$
We will sometimes choose to resolve crossings between objects in $\cat{A}\subset \dcentcat{A}$ and objects in $\dcentcat{A}$, in order to make manipulations of the string diagrams easier to follow. Given $(a,s_{-,a})\in\cat{A}\subset \dcentcat{A}$ and $z\in\dcentcat{A}$, 
$$
s_{c,a}=:
\hbox{
	\begin{tikzpicture}[baseline=(current  bounding  box.center)]
	\node (c) at (0.5,0){$z$};
	\node (a) at (1, 0){$a$};

	\coordinate (co) at (1,1);
	\coordinate (ao) at ( 0.5,1);
	\begin{knot}[clip width=4]
	\strand [thick] (a) to [out=90,in=-90] (ao);
	\end{knot}
	\draw [thick] (c) to [out=90,in=-90] (co);
	\end{tikzpicture}
}
=
\hbox{
	\begin{tikzpicture}[baseline=(current  bounding  box.center)]
	\node (c) at (0.5,0){$z$};
	\node (a) at (1, 0){$a$};

	\coordinate (co) at (1,1);
	\coordinate (ao) at ( 0.5,1);
	\begin{knot}[clip width=4]
	\strand [thick] (a) to [out=90,in=-90] (ao);
	\strand [thick] (c) to [out=90,in=-90] (co);
	\end{knot}
	\end{tikzpicture}
}.
$$
In the case where also $z=(a',s_{-,a'})\in\cat{A}\subset\dcentcat{A}$, we have:
\begin{equation}\label{STtransparancy}
s_{a',a}=:
\hbox{
	\begin{tikzpicture}[baseline=(current  bounding  box.center)]
	\node (c) at (0.5,0){$z$};
	\node (a) at (1, 0){$a$};

	\coordinate (co) at (1,1);
	\coordinate (ao) at ( 0.5,1);
	\begin{knot}[clip width=4]
	\strand [thick] (a) to [out=90,in=-90] (ao);
	\end{knot}
	\draw [thick] (c) to [out=90,in=-90] (co);
	\end{tikzpicture}
}
=
\hbox{
	\begin{tikzpicture}[baseline=(current  bounding  box.center)]
	\node (c) at (0.5,0){$z$};
	\node (a) at (1, 0){$a$};

	\coordinate (co) at (1,1);
	\coordinate (ao) at ( 0.5,1);
	\begin{knot}[clip width=4]
	\strand [thick] (a) to [out=90,in=-90] (ao);
	\strand [thick] (c) to [out=90,in=-90] (co);
	\end{knot}
	\end{tikzpicture}
}
=
\hbox{
	\begin{tikzpicture}[baseline=(current  bounding  box.center)]
	\node (c) at (0.5,0){$z$};
	\node (a) at (1, 0){$a$};

	\coordinate (co) at (1,1);
	\coordinate (ao) at ( 0.5,1);
	\begin{knot}[clip width=4]
	\strand [thick] (a) to [out=90,in=-90] (ao);
	\strand [thick] (c) to [out=90,in=-90] (co);
	\flipcrossings{1}
	\end{knot}
	\end{tikzpicture}
},
\end{equation}
because in this case both half-braidings are given by the symmetry in $\cat{A}$. The following notion will be used throughout:
\begin{df}\label{STtransparentdef}
	Let $z,z'\in\cat{Z}$ be objects of a braided monoidal category. If 
	$$
	\hbox{
		\begin{tikzpicture}[baseline=(current  bounding  box.center)]
		\node (c) at (0.5,0){$z'$};
		\node (a) at (1, 0){$z$};

		\coordinate (co) at (1,1);
		\coordinate (ao) at ( 0.5,1);
		\begin{knot}[clip width=4]
		\strand [thick] (a) to [out=90,in=-90] (ao);
		\strand [thick] (c) to [out=90,in=-90] (co);
		\end{knot}
		\end{tikzpicture}
	}
	=
	\hbox{
		\begin{tikzpicture}[baseline=(current  bounding  box.center)]
		\node (c) at (0.5,0){$z'$};
		\node (a) at (1, 0){$z$};

		\coordinate (co) at (1,1);
		\coordinate (ao) at ( 0.5,1);
		\begin{knot}[clip width=4]
		\strand [thick] (a) to [out=90,in=-90] (ao);
		\strand [thick] (c) to [out=90,in=-90] (co);
		\flipcrossings{1}
		\end{knot}
		\end{tikzpicture}
	},
	$$
	then $z$ and $z'$ are said to be \emph{transparent} to each other. For a subcategory $\cat{Y}\subset\cat{Z}$, the \emph{centraliser $\cat{Z}_2(\cat{Y},\cat{Z})$ of $\cat{Y}$ in $\cat{Z}$} is the full subcategory on those objects of $\cat{Z}$ that are transparent to all objects of $\cat{Y}$.
\end{df}

Because of the naturality and monoidality of the symmetry, the resolved and unresolved crossings satisfy:
\begin{equation}\label{STcrossinginteraction}
\hbox{
	\begin{tikzpicture}[baseline=(current  bounding  box.center)]
	\coordinate (c) at (0.5,0);
	\coordinate (a) at (1, 0);
	
	\coordinate (b) at (0,0);
	\coordinate (bo) at (0.5,2);
	
	\coordinate (co) at (1,2);
	\coordinate (ao) at ( 0,2);
	\begin{knot}[clip width=4]
	\strand [thick] (a) to [out=90,in=-90] (ao);
	\strand [thick] (c) to [out=90,in=-90] (co);
	\flipcrossings{1}
	\end{knot}
	\draw [thick] (b) to [out=90,in=-90] (bo);
	\end{tikzpicture}
}=
\hbox{
	\begin{tikzpicture}[baseline=(current  bounding  box.center)]
	\coordinate (c) at (0.5,0);
	\coordinate (a) at (1, 0);
	
	\coordinate (b) at (0,0);
	\coordinate (bo) at (0.5,2);
	
	\coordinate (lc) at (0.1,1);
	\coordinate (rc) at (0.5,1);
	
	\coordinate (co) at (1,2);
	\coordinate (ao) at ( 0,2);
	\begin{knot}[clip width=4]
	\strand [thick] (a) to [out=90,in=-90] (ao);
	\strand [thick] (c) to [out=90,in=-90] (lc) to [out=90,in=-90] (co);
	\flipcrossings{1}
	\end{knot}
	\draw [thick] (b) to [out=90,in=-90] (rc) to [out=90,in=-90] (bo);
	\end{tikzpicture}
}.	
\end{equation}

\subsubsection{Quantum Dimensions and Global Dimension}
In the rest of this paper, we will denote a set of representatives of the isomorphism classes of simple objects of our symmetric category $\cat{A}$ by $\cat{O}(\cat{A})$. The quantum dimension $d_i$ of $i\in\cat{O}(\cat{A})$ is defined by: 
$$
d_i=
\hbox{
	\begin{tikzpicture}[baseline=(current  bounding  box.center)]
	\coordinate (west) at (-1,0);
	\coordinate (north) at (0,1);
	\coordinate (east) at (1,0);
	\coordinate (south) at (0,-1);
	\node (i) at (-0.8,0) {$i$};
	
	\begin{knot}[clip width=4]
	\strand [thick] (east)
	to [out=-90,in=0] (south)
	to [out=-180,in=-90] (west)
	to [out=90,in=-180] (north)
	to [out=0,in=90] (east);
	\end{knot}
	\end{tikzpicture}
},
$$
where we have used the pivotal structure $i= i^{**}$\footnote{A pivotal structure for a rigid monoidal category is a monoidal natural isomorphism between the identity functor and the double dual functor. An important open conjecture \cite{Etingof2002} is that every fusion category is pivotal. Rigid symmetric monoidal categories are pivotal with pivotal structure given by the identity natural transformation. It is routine to show that composing the evaluation and coevaluation morphisms for a choice of right dual $a^*$ of an object $a$ produces the evaluation and coevaluation morphisms that exhibit $a$ as $(a^*)^*$.} in on the right hand side of the loop. We will also make use of the following notation:
\begin{equation}\label{STbulletdef}
\theta_i=
\hbox{
	\begin{tikzpicture}[baseline=(current  bounding  box.center)]
	\node (i) at (0,0) {$i$};
	\node (idd) at (0,2) {$i$};
	
	\coordinate (dot) at (0,1);
	
	\begin{knot}[clip width=4]
	\strand [thick] (i)
	to [out=90,in=-90] (dot)
	to [out=90,in=-90] (idd);
	\end{knot}
	\fill (dot) circle[radius=2pt];
	\end{tikzpicture}
}
:=
\hbox{
	\begin{tikzpicture}[baseline=(current  bounding  box.center)]
	\node (i) at (0,0) {$i$};
	\node (idd) at (0,2) {$i$};
	
	\coordinate (dot) at (0.25,1.2);
	\coordinate (cc) at (0.25,0.8);
	
	\begin{knot}[clip width=4]
	\strand [thick] (i)
	to [out=90,in=180] (dot)
	to [out=0,in=0] (cc)
	to [out=180,in=-90] (idd);
	\end{knot}
	\end{tikzpicture}
}.
\end{equation}
This makes $\cat{A}$ into a ribbon category (see \cite{Henriques2016} for a careful exposition of braided pivotal categories and the ribbon condition for these). From this we read off that, because $\cat{A}$ is symmetric and $\theta_i^2=\id$, the twist will be $\pm\id$ on any simple object $i$ of $\cat{A}$. The global dimension of $\cat{A}$ will be denoted by
$$
D:=\sum_{i\in\cat{O}(\cat{A})}d_i^2.
$$
This global dimension will always be non-zero, as we are working with fusion categories over the complex numbers \cite[Theorem 2.3]{Etingof2002}.
 
We will use the additional notation
\begin{equation}\label{STringdef}
\hbox{
\begin{tikzpicture}[baseline=(current  bounding  box.center)]
\coordinate (west) at (-1,0);
\coordinate (north) at (0,1);
\coordinate (east) at (1,0);
\coordinate (south) at (0,-1);

\begin{knot}[clip width=4]
\strand [blue, thick] (west)
to [out=90,in=-180] (north)
to [out=0,in=90] (east)
to [out=-90,in=0] (south)
to [out=-180,in=-90] (west);
\end{knot}
\end{tikzpicture}
}
\mbox{ represents }\sum_{i\in\cat{O}(\cat{A})}\frac{d_i}{D}
\hbox{
\begin{tikzpicture}[baseline=(current  bounding  box.center)]
\coordinate (west) at (-1,0);
\coordinate (north) at (0,1);
\coordinate (east) at (1,0);
\coordinate (south) at (0,-1);

\node (i) at (-0.8,0) {$i$};
\begin{knot}[clip width=4]
\strand [thick] (east)
to [out=-90,in=0] (south)
to [out=-180,in=-90] (west)
to [out=90,in=-180] (north)
to [out=0,in=90] (east);
\end{knot}
\end{tikzpicture}
},
\end{equation}
whenever we encounter an unlabelled loop (possibly winding around other strands) in a string diagram.

\subsection{Direct Sum Decompositions}
In our proofs we will make frequent use of the following lemmas and notation. We will introduce them in the setting of a ribbon fusion category $\cat{A}$. This section
contains no new results (see \cite{Etingof2002,Bartlett2015}), we give proofs for convenience and later use.

\subsubsection{Dual Bases and Decompositions}
\begin{notation}
Given $i,j,k\in\cat{A}$, we will denote by $B(ij,k)$ a basis for the vector space $\cat{A}(ij,k)$.
\end{notation}

Since $\cat{A}$ is in particular semi-simple, we can, for fixed $i,j$, use this choice of basis $B(ij,k)$ for each $k\in\SA$, to give a direct sum decomposition of $ij$. In other words, we can give a decomposition of the identity on $ij$ as:
\begin{equation}\label{STresid}
\hbox{
\begin{tikzpicture}[baseline=(current  bounding  box.center)]

\node (a) at (-0.25,-2) {$i$};
\node (i) at (0.25,-2) {$j$};
\coordinate (ao) at (-0.25,1);
\coordinate (io) at (0.25,1);

\begin{knot}[clip width=4]

\strand [thick] (a) to [out=90,in=-90] (ao);
\strand [thick] (i) to (io);

\end{knot}
\end{tikzpicture}
}
= \sum_{k\in\cat{O}(\cat{A})}\sum_{\phi\in B(ij,k)}
\hbox{
\begin{tikzpicture}[baseline=(current  bounding  box.center)]

\node (a) at (-0.25,-2) {$i$};
\node (i) at (0.25,-2) {$j$};
\coordinate (ao) at (-0.25,1);
\coordinate (io) at (0.25,1);

\node (phi) at (0,-1) [draw,thick]{$\phi$};
\node (phid) at (0,0.25) [draw,thick]{$\phi^t$};

\node (k) at (-0.25,-0.5) {$k$};

\begin{knot}[clip width=4]
\strand [thick] (a) to [out=90,in=-110] (phi);
\strand [thick] (phi) to [out=90,in=-90] (phid);
\strand [thick] (phid) to [out =110,in=-90] (ao);
\strand [thick] (i) to  [out=90, in =-70] (phi);
\strand [thick] (phid) to [out=70,in=-90] (io);
\end{knot}
\end{tikzpicture}
}
.
\end{equation}

Here the $\phi^t$ are defined below in Definition \ref{STtransposedef}. The pairs $(\phi,\phi^t)$ for a given $k$ are (projection, inclusion)-pairs for subjects of $ij$ isomorphic to the simple object $k$. Choosing the $\phi$ from the basis $B(ij,k)$ ensures we exhaust all $k$-summands of $ij$ without linear dependence.
\begin{df}\label{STtransposedef}
	Let $\phi\in B(ij,k)$ be an element in a basis for $\cat{A}(ij,k)$, for simple objects $i,j,k$. Then a \emph{transpose} of $\phi$ is the morphism $\phi^t$ in a dual basis for $\cat{A}(k,ij)$, with respect to the pairing:
	$$
	\circ:\cat{A}(ij,k)\otimes\cat{A}(k,ij)\rar \cat{A}(k,k)=\C,
	$$ 
	such that $\phi\circ \phi^t=\id_k$ and $\psi\circ \phi^t=0$ for $\psi\in B(ij,k)-\{\phi\}$. As this pairing is non-degenerate (composing a morphism with an arbitrary morphism can only always be zero if the morphism is zero), such a dual basis, and hence transpose, always exist.
\end{df}

\subsubsection{Producing Decompositions from Decompositions}
Picking resolutions of the identities on $ij$ for a fixed $i\in \SA$ and all $j\in\SA$ induces a corresponding resolution of the identity on $k^*i$:
\begin{lem}
Pick, for a fixed $i\in\SA$ and all $j\in\SA$, a resolution of the identity on $ij$ as in Equation \eqref{STresid}. Then, for all $k\in\SA$:
\begin{equation}\label{SThelpfulid}
\hbox{
\begin{tikzpicture}[baseline=(current  bounding  box.center)]

\node (a) at (0.9,-2) {$i$};
\node (k) at (0.1,-2) {$k^*$};

\node (ao) at (0.9,1.5){$i$};
\node (ko) at (0.1,1.5){$k^*$};

\begin{knot}[clip width=4]
\strand [thick] (k) to [out=90,in=-90] (ko);
\strand [thick] (a) to [out=90,in=-90] (ao);
\end{knot}
\end{tikzpicture}
}
=
\sum_{j\in\cat{O}(\cat{A})}\sum_{\phi\in B(ij,k)}\frac{d_j}{d_k}
\hbox{
	\begin{tikzpicture}[baseline=(current  bounding  box.center)]
	
	\node (a) at (0.9,-2) {$i$};
	\node (k) at (0.1,-2) {$k^*$};
	
	\node (ao) at (0.9,1.5){$i$};
	\node (ko) at (0.1,1.5){$k^*$};

	\node (phi) at (1,-0.7) [draw,thick]{$\phi$};
	\node (phid) at (1,0.8) [draw,thick]{$\phi^t$};
	
	\coordinate (lc) at (1.5,-0.8);
	\coordinate (uc) at (1.5, 1);
	
	\node (i) at (1.75,0.5) {$j^*$};
	
	\begin{knot}[clip width=4]
	\strand [thick] (ko) to [out=-90,in=-90] (phid);
	\strand [thick] (phid) to [out=70,in=90] (uc) to [out=-90,in=90] (lc) to [out=-90,in=-70] (phi);
	\strand [thick] (phi) to [out=90,in=90] (k);
	\strand [thick] (a) to [out=90,in=-110] (phi);
	\strand [thick] (phid) to [out=110,in=-90] (ao);
	\end{knot}
	\end{tikzpicture}
}.
\end{equation}
\end{lem}

\begin{proof}
We claim that we can give a direct sum decomposition of $k^*i$, by using for each $j\in \SA$ and $\phi\in B(ij,k)$:
\begin{equation}\label{STnewinclproj}
\hbox{
\begin{tikzpicture}[baseline=(current  bounding  box.center)]

\node (a) at (0.9,-2) {$i$};
\node (k) at (0.1,-2) {$k^*$};

\node (phi) at (1,-0.7) [draw,thick]{$\phi$};

\coordinate (lc) at (1.5,-0.8);
\coordinate (uc) at (1.5, 0);

\node (i) at (1,1) {$j^*$};

\begin{knot}[clip width=4]
\strand [thick] (i) to [out=-90,in=90] (uc) to [out=-90,in=90] (lc) to [out=-90,in=-70] (phi);
\strand [thick] (phi) to [out=90,in=90] (k);
\strand [thick] (a) to [out=90,in=-110] (phi);
\end{knot}
\end{tikzpicture}
}
\tn{ and }\frac{d_j}{d_k}
\hbox{
\begin{tikzpicture}[baseline=(current  bounding  box.center)]

\coordinate (kctr) at (0.1,1);

\node (ao) at (0.9,2){$i$};
\node (ko) at (0.1,2){$k^*$};

\node (phid) at (1,0.8) [draw,thick]{$\phi^t$};

\coordinate (lc) at (1.5,0.5);
\coordinate (uc) at (1.5, 1);

\node (i) at (1,-0.5) {$j^*$};

\begin{knot}[clip width=4]
\strand [thick] (ko) to [out=-90,in=90] (kctr) to [out=-90,in=-90] (phid);
\strand [thick] (phid) to [out=70,in=90] (uc) to [out=-90,in=90] (lc) to [out=-90,in=90] (i);
\strand [thick] (phid) to [out=110,in=-90] (ao);
\end{knot}
\end{tikzpicture}
}
\end{equation}
as projection to and inclusion of $j^*$, respectively. That is, we want to see that for each $j$ (and hence $j^*$) the morphisms from Equation \eqref{STnewinclproj} form, letting $\phi$ range through $B(ij,k)$, a basis for $\cat{A}(k^*i,j^*)$ and a corresponding dual basis for $\cat{A}(j^*,k^*i)$. To see this, we first check that composing a $\phi'$ and a $\phi^t$ along $k^*i$ indeed gives the identity on $j^*$ if and only if $\phi=\phi'$, and zero otherwise:
$$
\frac{d_j}{d_k}
\hbox{
	\begin{tikzpicture}[baseline=(current  bounding  box.center)]
	
	\node (ilabel) at (0.8,-2){$j^*$};
	\coordinate (i) at (1,-2);
	\coordinate (io) at (1,2);
	
	\coordinate (lc) at (1.5,-0.6);
	\coordinate (uc) at (1.5,0.6);
	
	\node (phid) at (1,-0.8) [draw,thick]{$\phi^t$};
	\node (phi) at (1,0.8) [draw,thick]{$\phi'$};
	
	\coordinate (kcu) at (0.5,0.9);
	\coordinate (kcl) at (0.5,-0.9);
	
	\coordinate (ic) at (2,0);
	
	\begin{knot}[clip width=4]
	\strand [thick] (i) to [out=90,in=-90] (lc) to [out=90,in=70] (phid);
	\strand [thick] (phid) to [out=-90,in=-90] (kcl) to [out=90,in=-90] (kcu) to [out=90,in=90] (phi);
	\strand [thick] (phid) to [out=110,in=-110] (phi);
	\strand [thick] (phi) to [out=-70,in=-90] (uc) to [out=90,in=-90] (io) to [out=90,in=90] (ic) to [out=-90,in=-90] (i);
	\end{knot}
	\end{tikzpicture}
}
=
\frac{d_j}{d_k}
\hbox{
\begin{tikzpicture}[baseline=(current  bounding  box.center)]

\node (phid) at (1,-0.8) [draw,thick]{$\phi^t$};
\node (phi) at (1,0.8) [draw,thick]{$\phi'$};

\coordinate (kcu) at (0.5,0.9);
\coordinate (kcl) at (0.5,-0.9);

\begin{knot}[clip width=4]
\strand [thick] (phi) to [out=-70,in=70] (phid);
\strand [thick] (phid) to [out=-90,in=-90] (kcl) to [out=90,in=-90] (kcu) to [out=90,in=90] (phi);
\strand [thick] (phid) to [out=110,in=-110] (phi);
\end{knot}
\end{tikzpicture}
}
=\frac{d_j}{d_k}\delta_{\phi,\phi'}
\hbox{
\begin{tikzpicture}[baseline=(current  bounding  box.center)]

\coordinate (kcu) at (0.5,0.9);
\coordinate (kcl) at (0.5,-0.9);

\coordinate (ic) at (0.7,0);

\node (k) at (0.9,0){$k$};

\begin{knot}[clip width=4]
\strand [thick] (ic) to [out=-90,in=-90] (kcl) to [out=90,in=-90] (kcu) to [out=90,in=90] (ic);
\end{knot}
\end{tikzpicture}
}
=
\delta_{\phi,\phi'}
d_j,
$$
where the first identity uses the snake identities from Equation \eqref{STsnakes}, and in the second equality we used that composing $\phi'$ and $\phi^t$ along $ij$ gives the identity on $k$ if $\phi=\phi'$ and zero otherwise, by Definition \ref{STtransposedef}. As this is the trace of an endomorphism of $j^*$, and $j^*$ is simple, this shows that $\phi'$ and $\phi^t$ compose to the identity on $j^*$ if and only if $\phi=\phi'$, and to zero otherwise. 

To finish the argument, we just need show that, letting $\phi$ run through $B(ij,k)$, the first morphisms from Equation \eqref{STnewinclproj} form a basis for $\cat{A}(k^*i,j^*)$. But $\cat{A}(k^*i,j^*)\cong \cat{A}(ij,k)$, along the map that takes any $\phi\in \cat{A}(ij,k)$ to the first morphism in Equation \eqref{STnewinclproj}. This shows the morphisms from Equation \eqref{STnewinclproj} indeed form a complete and linearly independent set of (projection, inclusion)-pairs for each $j^*$. As $j^*$ indexes through all isomorphism classes of simple objects in $\cat{A}$, this gives a direct sum decomposition of $k^*i$.
\end{proof}

Similarly, we have:
\begin{lem}\label{STdirectsumversion}
	Pick, for fixed $j$ and all $i$ in $\SA$ a resolution of the identity as in Equation \eqref{STresid}. Then:
$$	
\hbox{
	\begin{tikzpicture}[baseline=(current  bounding  box.center)]
	
	\node (a) at (0.9,-2) {$j^*$};
	\node (k) at (0.1,-2) {$k$};
	
	\node (ao) at (0.9,1.5){$j^*$};
	\node (ko) at (0.1,1.5){$k$};

	\begin{knot}[clip width=4]
	\strand [thick] (k) to [out=90,in=-90] (ko);
	\strand [thick] (a) to [out=90,in=-90] (ao);
	\end{knot}
	\end{tikzpicture}
}
=
\sum_{i\in\cat{O}(\cat{A})}\sum_{\phi\in B(ij,k)}\frac{d_i}{d_k}
\hbox{
	\begin{tikzpicture}[baseline=(current  bounding  box.center)]
	
	\node (k) at (0,-2) {$k$};
	\node (jd) at (0.5,-2) {$j^*$};
	
	\node (ko) at (0,1.5){$k$};
	\node (jdo) at (0.5,1.5){$j^*$};

	\node (phid) at (0,-0.7) [draw,thick]{$\phi^t$};
	\node (phi) at (0,0.8) [draw,thick]{$\phi$};
	
	\coordinate (lc) at (0.5,-0.6);
	\coordinate (uc) at (0.5, 0.6);

	\begin{knot}[clip width=4]
	\strand [thick] (ko) to [out=-90,in=90] (phi);
	\strand [thick] (phid) to [out=70,in=90] (lc) to [out=-90,in=90] (jd);
	\strand [thick] (phid) to [out=-90,in=90] (k);
	\strand [thick] (jdo) to [out=-90,in=90] (uc) to [out=-90,in=-70] (phi);
	\strand [thick] (phid) to [out=110,in=-110] (phi);
	\end{knot}
	\end{tikzpicture}
}.
$$
\end{lem}
\begin{proof}
	The proof is analogous to the proof of the previous lemma.
\end{proof}

\subsection{Idempotents and Subobjects}\label{STidemnotation}
\subsubsection{Notation for Associated Subobjects}
Let $\cat{Z}$ again be an idempotent complete category (fusion categories are in particular idempotent). That is, for every $z\in\cat{Z}$ and $f\in \End(z)$ such that $f^2=f$ there exists $z_f\in \cat{Z}$, together with $i:z_f\hookrightarrow z$ and $p:z\twoheadrightarrow z_f$ satisfying $pi=\id_{z_f}$ and $ip=f$. Graphically, we will express this by using:
$$
i=
\hbox{
\begin{tikzpicture}[baseline=(current  bounding  box.center)]

\node (cf) at (0,-0.5) {$z_f$};

\node (tr) at (0.095,0.15) {$\bigtriangledown^f$};
\coordinate (trd) at (0,0);
\coordinate (tru) at (0,0.2);
\node (c) at (0,0.6) {$z$};

\begin{knot}[clip width=4]
\strand [thick] (cf) to (trd);
\strand [thick] (tru) to (c);
\end{knot}
\end{tikzpicture}
}
,\quad p= 
\hbox{
\begin{tikzpicture}[baseline=(current  bounding  box.center)]

\node (cf) at (0,-0.5) {$z$};

\node (tr) at (0.1,0.09) {$\bigtriangleup_f$};
\coordinate (trd) at (0,0.02);
\coordinate (tru) at (0,0.25);
\node (c) at (0,0.6) {$z_f$};

\begin{knot}[clip width=4]
\strand [thick] (cf) to (trd);
\strand [thick] (tru) to (c);
\end{knot}
\end{tikzpicture}
},
$$
with conditions
$$
\hbox{
\begin{tikzpicture}[baseline=(current  bounding  box.center)]

\node (cf) at (0,-0.5) {$z_f$};

\node (tr) at (0.095,0.1) {$\bigtriangledown_f$};
\coordinate (trd) at (0,0);
\coordinate (tru) at (0,0.2);

\node (tr1) at (0.1,0.69) {$\bigtriangleup_f$};
\coordinate (trd1) at (0,0.62);
\coordinate (tru1) at (0,0.85);
\node (c1) at (0,1.2) {$z_f$};

\begin{knot}[clip width=4]
\strand [thick] (cf) to (trd);
\strand [thick] (tru) to (trd1);
\strand [thick] (tru1) to (c1);
\end{knot}
\end{tikzpicture}
}
=
\hbox{
\begin{tikzpicture}[baseline=(current  bounding  box.center)]

\node (cf) at (0,-0.5) {$z_f$};
\node (c1) at (0,1.2) {$z_f$};

\begin{knot}[clip width=4]
\strand [thick] (cf) to (c1);
\end{knot}
\end{tikzpicture}
}
\tn{ and }
\hbox{
\begin{tikzpicture}[baseline=(current  bounding  box.center)]

\node (cf) at (0,-0.5) {$z$};

\node (tr) at (0.1,0.09) {$\bigtriangleup_f$};
\coordinate (trd) at (0,0.02);
\coordinate (tru) at (0,0.25);

\node (tr1) at (0.095,0.7) {$\bigtriangledown_f$};
\coordinate (trd1) at (0,0.6);
\coordinate (tru1) at (0,0.8);
\node (c1) at (0,1.2) {$z$};

\begin{knot}[clip width=4]
\strand [thick] (cf) to (trd);
\strand [thick] (tru) to (trd1);
\strand [thick] (tru1) to (c1);
\end{knot}
\end{tikzpicture}
}
=
\hbox{
\begin{tikzpicture}[baseline=(current  bounding  box.center)]

\node (cf) at (0,-0.5) {$z$};
\node (c1) at (0,1.2) {$z$};
\node (f) at (0,0.3) [draw]{$f$};

\begin{knot}[clip width=4]
\strand [thick] (cf) to (f) to (c1);
\end{knot}
\end{tikzpicture}
}.
$$

We will refer to the object $z_f$ as the subobject associated to $f$.

\subsubsection{Comparing Idempotents}
The following lemma will be useful later on:
\begin{lem}\label{STconjugateidems}
	Let $z,z'$ be objects in an idempotent complete category $\cat{Z}$, and let $f:z\rar z$ and $f':z'\rar z'$ be idempotents, denote their associated projections, inclusions and subobjects by $(p,i,z_f)$ and $(p',i',z_f')$, respectively. Suppose that $g:z\rar z'$ is an isomorphism such that $f'=gfg^{-1}$, then $p'gi:z_f\rar z_f'$ is an isomorphism.
\end{lem}
\begin{proof}
	We claim that the inverse of $p'gi$ is $pg^{-1}i'$. To see this, we compute:
	$$
	p'gipg^{-1}i'=p'gfg^{-1}i'=p'f'i'=p'i'p'i'=\id_{z_f'}.
	$$
	The other composite is similarly seen to be the identity.
\end{proof}

\section{The Symmetric Tensor Product}\label{STsymtenssector}
We will now proceed with constructing the symmetric tensor product on $\dcentcat{A}$.
To do this, we first define an idempotent we will use to pick out a
subobject of the usual tensor product on $\dcentcat{A}$. We then equip this subobject
with a convenient braiding, and define the result to be the symmetric tensor
product. We then show how to extend this definition to the morphisms of $\dcentcat{A}$
and check that the result satisfies the axioms of a symmetric monoidal structure
on $\dcentcat{A}$.

\subsection{A Useful Idempotent}

\subsubsection{Definition of the Idempotent}\label{STdefofidemp}
Recall that $\cat{A}$ is a symmetric ribbon fusion category. Let $z,z'\in\dcentcat{A}$ and suppose $z=(a,\beta)$ and $z'=(a',\beta')$. In defining the symmetric tensor product, we will use the following idempotent to pick out a subobject of $z\otimes_c z'$:
\begin{equation}\label{STPIzzdef}
\Pi_{z,z'}:=
\hbox{
\begin{tikzpicture}[baseline=(current  bounding  box.center)]
\coordinate (west) at (-1,0);
\coordinate (north) at (0,0.5);
\coordinate (east) at (1,0);
\coordinate (south) at (0,-0.4);
\node (a) at (-0.5,-1.2) {$z$};
\node (b) at (0.5,-1.2) {$z'$};
\coordinate (ao) at (-0.5,1);
\coordinate (bo) at (0.5,1);

\begin{knot}[clip width=4]
\strand [blue, thick] (west)
to [out=90,in=-180] (north)
to [out=0,in=90] (east)
to [out=-90,in=0] (south)
to [out=-180,in=-90] (west);
\strand [thick] (a) to (ao);
\strand [thick] (b) to (bo);
\flipcrossings{1,4}
\end{knot}
\end{tikzpicture}
}
=\sum_{i\in\cat{O}(\cat{A})}\frac{d_i}{D}
\hbox{
	\begin{tikzpicture}[baseline=(current  bounding  box.center)]
	\coordinate (west) at (-1,0);
	\coordinate (north) at (0,0.5);
	\coordinate (east) at (1,0);
	\coordinate (south) at (0,-0.4);
	\node (a) at (-0.5,-1.2) {$z$};
	\node (b) at (0.5,-1.2) {$z'$};
	\coordinate (ao) at (-0.5,1);
	\coordinate (bo) at (0.5,1);
	
	\node (i) at (-1.25,0){$i$};
	
	\begin{knot}[clip width=4]
	\strand [thick] (west)
	to [out=90,in=-180] (north)
	to [out=0,in=90] (east)
	to [out=-90,in=0] (south)
	to [out=-180,in=-90] (west);
	\strand [thick] (a) to (ao);
	\strand [thick] (b) to (bo);
	\flipcrossings{1,4}
	\end{knot}
	\end{tikzpicture}
},
\end{equation}

where the equality spells out the notation from Equation \eqref{STringdef}. It is worth emphasising here that the crossings where the $z$ or $z'$ strand passes over an $i$ strands represents $\beta_{i}$ or $\beta'_i$, respectively, whereas the crossings where the $z$ or $z'$ goes under an $i$ strand represent $s_{i,z}$ and $s_{i,z'}$, respectively.

\begin{lem}\label{STringswitch}
The morphism $\Pi_{z,z'}$ from Equation \eqref{STPIzzdef} satisfies
$$
\Pi_{z,z'}=
\hbox{
\begin{tikzpicture}[baseline=(current  bounding  box.center)]
\coordinate (west) at (-1,0);
\coordinate (north) at (0,0.5);
\coordinate (east) at (1,0);
\coordinate (south) at (0,-0.4);
\node (a) at (-0.5,-1.2) {$z$};
\node (b) at (0.5,-1.2) {$z'$};
\coordinate (ao) at (-0.5,1);
\coordinate (bo) at (0.5,1);

\begin{knot}[clip width=4]
\strand [blue, thick] (west)
to [out=90,in=-180] (north)
to [out=0,in=90] (east)
to [out=-90,in=0] (south)
to [out=-180,in=-90] (west);
\strand [thick] (a) to (ao);
\strand [thick] (b) to (bo);
\flipcrossings{1,4}
\end{knot}
\end{tikzpicture}
}=
\hbox{
\begin{tikzpicture}[baseline=(current  bounding  box.center)]
\coordinate (west) at (-1,0);
\coordinate (north) at (0,0.5);
\coordinate (east) at (1,0);
\coordinate (south) at (0,-0.4);
\node (a) at (-0.5,-1.2) {$z$};
\node (b) at (0.5,-1.2) {$z'$};
\coordinate (ao) at (-0.5,1);
\coordinate (bo) at (0.5,1);

\begin{knot}[clip width=4]
\strand [blue, thick] (west)
to [out=90,in=-180] (north)
to [out=0,in=90] (east)
to [out=-90,in=0] (south)
to [out=-180,in=-90] (west);
\strand [thick] (a) to (ao);
\strand [thick] (b) to (bo);
\flipcrossings{2,3}
\end{knot}
\end{tikzpicture}
},
$$

for all $z,z'\in\dcentcat{A}$.
\end{lem}

\begin{proof}
We compute:
$$
\hbox{
\begin{tikzpicture}[baseline=(current  bounding  box.center)]
\coordinate (west) at (-1,0);
\coordinate (north) at (0,0.5);
\coordinate (east) at (1,0);
\coordinate (south) at (0,-0.4);
\node (a) at (-0.5,-1.2) {$z$};
\node (b) at (0.5,-1.2) {$z'$};
\coordinate (ao) at (-0.5,1);
\coordinate (bo) at (0.5,1);

\begin{knot}[clip width=4]
\strand [blue, thick] (west)
to [out=90,in=-180] (north)
to [out=0,in=90] (east)
to [out=-90,in=0] (south)
to [out=-180,in=-90] (west);
\strand [thick] (a) to (ao);
\strand [thick] (b) to (bo);
\flipcrossings{1,4}
\end{knot}
\end{tikzpicture}
}
=
\hbox{
	\begin{tikzpicture}[baseline=(current  bounding  box.center)]
	\coordinate (west) at (-1.5,0);
	\coordinate (north) at (-0.2,0.5);
	\coordinate (east) at (1.5,0);
	\coordinate (south) at (0,-0.4);
	\node (a) at (-0.75,-1.6) {$z$};
	\node (b) at (0.75,-1.6) {$z'$};
	\coordinate (ao) at (-0.75,1);
	\coordinate (bo) at (0.75,1);
	
	\coordinate (es) at (0.4,-1.0);

	\begin{knot}[clip width=4,clip radius=3pt]
	\strand [blue, thick] (west)
	to [out=90,in=-180] (north)
	to [out=0,in=180] (es)
	to [out=0,in=90] (east);
	\strand[thick,blue] (east)
	to [out=-90,in=0] (south)
	to [out=-180,in=-90] (west);
	\strand [thick] (a) to (ao);
	\strand [thick] (b) to (bo);
	\flipcrossings{3,6}
	\end{knot}
	\end{tikzpicture}
}
=
\hbox{
\begin{tikzpicture}[baseline=(current  bounding  box.center)]
\coordinate (west) at (-1,0);
\coordinate (north) at (0,-0.5);
\coordinate (east) at (1,0);
\coordinate (south) at (0,0.4);
\node (a) at (-0.5,-1.2) {$z$};
\node (b) at (0.5,-1.2) {$z'$};
\coordinate (ao) at (-0.5,1);
\coordinate (bo) at (0.5,1);

\begin{knot}[clip width=4,consider self intersections,clip radius=3pt]
\strand [blue, thick] (west)
to [out=90,in=-180] (north)
to [out=0,in=90] (east)
to [out=-90,in=0] (south)
to [out=-180,in=-90] (west);
\strand [thick] (a) to (ao);
\strand [thick] (b) to (bo);
\flipcrossings{1,3}
\end{knot}
\end{tikzpicture}
}
=
\hbox{
\begin{tikzpicture}[baseline=(current  bounding  box.center)]
\coordinate (west) at (-1,0);
\coordinate (north) at (0,0.5);
\coordinate (east) at (1,0);
\coordinate (south) at (0,-0.4);
\node (a) at (-0.5,-1.2) {$z$};
\node (b) at (0.5,-1.2) {$z'$};
\coordinate (ao) at (-0.5,1);
\coordinate (bo) at (0.5,1);

\begin{knot}[clip width=4]
\strand [blue, thick] (west)
to [out=90,in=-180] (north)
to [out=0,in=90] (east)
to [out=-90,in=0] (south)
to [out=-180,in=-90] (west);
\strand [thick] (a) to (ao);
\strand [thick] (b) to (bo);
\flipcrossings{2,3}
\end{knot}
\end{tikzpicture}
}.
$$

The first step pulls down the top strand. The objects on the loop are objects in $\cat{A}\subset\dcentcat{A}$, so crossings of loop with itself correspond to symmetries in $\cat{A}$. So for these crossings we are in the situation of Equation \eqref{STtransparancy}, and can pass those strands through each other. Having done this, we can pull the top strand further down to get the second equality. Then we use that the self-crossings give rise to a twist (see Equation \eqref{STbulletdef}) and we can push these along the loop to meet. As the twist squares to $1$ in a symmetric ribbon fusion category, see the discussion below Equation \eqref{STbulletdef}, the last equality follows.
\end{proof}

We claim that $\Pi_{z,z'}$ is an idempotent, we will prove this below in Lemma \ref{STidempotent}. This will be a consequence of another property, Lemma \ref{STcloaking}, we examine first. 

\subsubsection{Cloaking}
The idempotent $\Pi_{z,z'}$ has a very useful property, a phenomenon called \emph{cloaking}. This lemma is a corollary of \cite[Lemma 7.1]{Bartlett2015a}\footnote{In the paper \cite{Bartlett2015a}, cloaking is phrased as taking place within a solid torus with an incoming and outgoing boundary component. To get from this result to the one here, imagine thickening the ring to a solid torus, giving the torus a boundary on each side, and passing the $a$ strand through it.}. We reprove it here for convenience of the reader.

\begin{lem}\label{STcloaking}
Let $z,z'\in\dcentcat{A}$ and $a\in \cat{A}$. Then the following identity holds:
$$
\hbox{
\begin{tikzpicture}[baseline=(current  bounding  box.center)]
\coordinate (west) at (-0.5,-0.3);
\coordinate (north) at (0,0);
\coordinate (east) at (0.5,-0.3);
\coordinate (south) at (0,-0.6);
\node (a) at (-0.75,-1.5) {$a$};
\node (b) at (-0.25,-1.5) {$z$};
\node (c) at (0.25,-1.5) {$z'$};
\coordinate (ao) at (0.75,1);
\coordinate (bo) at (-0.25,1);
\coordinate (co) at (0.25,1);

\coordinate (lc) at (-0.75,-0.2);

\begin{knot}[clip width=4]
\strand [blue, thick] (west)
to [out=90,in=-180] (north)
to [out=0,in=90] (east)
to [out=-90,in=0] (south)
to [out=-180,in=-90] (west);
\strand [thick] (a) to [out=90,in=-90] (lc) to [out=90,in=-90] (ao);
\strand [thick] (b) to (bo);
\strand [thick] (c) to (co);
\flipcrossings{5,1,4}
\end{knot}
\end{tikzpicture}
}=
\hbox{
\begin{tikzpicture}[baseline=(current  bounding  box.center)]
\coordinate (west) at (-0.5,-0.3);
\coordinate (north) at (0,0);
\coordinate (east) at (0.5,-0.3);
\coordinate (south) at (0,-0.6);
\node (a) at (-0.75,-2) {$a$};
\node (b) at (-0.25,-2) {$z$};
\node (c) at (0.25,-2) {$z'$};
\coordinate (ao) at (0.75,0.5);
\coordinate (bo) at (-0.25,0.5);
\coordinate (co) at (0.25,0.5);

\coordinate (rc) at (0.75,-0.4);

\begin{knot}[clip width=4]
\strand [blue, thick] (west)
to [out=90,in=-180] (north)
to [out=0,in=90] (east)
to [out=-90,in=0] (south)
to [out=-180,in=-90] (west);
\strand [thick] (a) to [out=90,in=-90] (rc) to [out=90,in=-90] (ao);
\strand [thick] (b) to (bo);
\strand [thick] (c) to (co);
\flipcrossings{6,1,4}
\end{knot}
\end{tikzpicture}
}.
$$
\end{lem}

\begin{proof}
For each summand $i$ of the loop, we decompose the identity on $ai$, like in Equation \eqref{STresid}. Inserting this resolution of the identity at the leftmost part of the loop, and pushing the morphisms along the loop to the other side, we obtain:
\begin{equation}\label{STstepcloak}
\hbox{
\begin{tikzpicture}[baseline=(current  bounding  box.center)]
\coordinate (west) at (-0.5,-0.3);
\coordinate (north) at (0,0);
\coordinate (east) at (0.5,-0.3);
\coordinate (south) at (0,-0.6);
\node (a) at (-1,-1.5) {$a$};
\node (b) at (-0.25,-1.5) {$z$};
\node (c) at (0.25,-1.5) {$z'$};
\coordinate (ao) at (1,1);
\coordinate (bo) at (-0.25,1);
\coordinate (co) at (0.25,1);

\coordinate (lc) at (-1,-0.2);

\begin{knot}[clip width=4]
\strand [blue, thick] (west)
to [out=90,in=-180] (north)
to [out=0,in=90] (east)
to [out=-90,in=0] (south)
to [out=-180,in=-90] (west);
\strand [thick] (a) to [out=90,in=-90] (lc) to [out=90,in=-90] (ao);
\strand [thick] (b) to (bo);
\strand [thick] (c) to (co);
\flipcrossings{5,1,4}
\end{knot}
\end{tikzpicture}
}=
\sum_{i,k\in\cat{O}(\cat{A})}\sum_{\phi\in B(ai,k)}\frac{t_i}{D}
\hbox{
\begin{tikzpicture}[baseline=(current  bounding  box.center)]
\coordinate (west) at (-0.75,0.5);
\coordinate (north) at (0,1);
\coordinate (south) at (0,0);
\node (a) at (-1,-2) {$a$};
\node (b) at (-0.25,-2) {$z$};
\node (c) at (0.25,-2) {$z'$};
\coordinate (ao) at (1,2.5);
\coordinate (bo) at (-0.25,2.5);
\coordinate (co) at (0.25,2.5);

\node (phi) at (1,-0.5) [draw,thick]{$\phi$};
\node (phid) at (1,1.5) [draw,thick]{$\phi^t$};

\coordinate (eu) (0.75,2.5);
\coordinate (eastd) (0.75,-0.5);

\coordinate (left) at (-1,-1.5);
\coordinate (right) at (0.8,-1);

\coordinate (lc) at (1.5,-0.6);
\coordinate (uc) at (1.5, 1.7);

\node (k) at (-1,0.5) {$k$};
\node (i) at (1.75,0.5) {$i^*$};

\begin{knot}[clip width=4]
\strand [thick] (west) to [out=90,in=-180] (north) to [out=0,in=-90] (phid);
\strand [thick] (phid) to [out=70,in=90] (uc) to [out=-90,in=90] (lc) to [out=-90,in=-70] (phi);
\strand [thick] (phi) to [out=90,in=0] (eastd) to [out=180,in=0] (south) to [out=-180,in=-90] (west);
\strand [thick] (a) to [out=90,in=90] (left) to [out=90,in=-110] (phi);
\strand [thick] (phid) to [out=110,in=-90] (ao);
\strand [thick] (b) to (bo);
\strand [thick] (c) to (co);
\flipcrossings{1,4,6}
\end{knot}
\end{tikzpicture}
}.
\end{equation}
using Equation \eqref{SThelpfulid} on the rightmost part of this diagram now proves the lemma.
\end{proof}

\subsubsection{Verifying Idempotency}
We still need to check $\Pi_{z,z'}$ is idempotent.
\begin{lem}\label{STidempotent}
The morphism $\Pi_{z,z'}$ in $\dcentcat{A}$ is an idempotent of $z\otimes_c z'$.
\end{lem}
\begin{proof}
We compute
$$
\hbox{
\begin{tikzpicture}[baseline=(current  bounding  box.center)]
\coordinate (west1) at (-1,1.1);
\coordinate (north1) at (0,1.5);
\coordinate (east1) at (1,1.1);
\coordinate (south1) at (0,0.65);

\coordinate (west) at (-1,-0.2);
\coordinate (north) at (0,0.1);
\coordinate (east) at (1,-0.2);
\coordinate (south) at (0,-0.55);

\node (a) at (-0.5,-1.4) {$z$};
\node (b) at (0.5,-1.4) {$z'$};

\coordinate (ao) at (-0.5,2);
\coordinate (bo) at (0.5,2);

\begin{knot}[clip width=4]
\strand [blue, thick] (west1)
to [out=90,in=-180] (north1)
to [out=0,in=90] (east1)
to [out=-90,in=0] (south1);
\strand [thick, blue] (south1)
to [out=-180,in=-90] (west1);
\strand [blue, thick] (west)
to [out=90,in=-180] (north)
to [out=0,in=90] (east)
to [out=-90,in=0] (south)
to [out=-180,in=-90] (west);

\strand [thick] (a) to (ao);
\strand [thick] (b) to (bo);
\flipcrossings{1,3,5,8}
\end{knot}
\end{tikzpicture}
}
=
\hbox{
\begin{tikzpicture}[baseline=(current  bounding  box.center)]
\coordinate (west1) at (-1,1.1);
\coordinate (north1) at (0,1.5);
\coordinate (east1) at (1,1.1);
\coordinate (south1) at (0,0.65);

\coordinate (west) at (-1,-0.2);
\coordinate (north) at (0,0.1);
\coordinate (east) at (1,-0.2);
\coordinate (south) at (0,-0.55);

\node (a) at (-0.5,-1.4) {$z$};
\node (b) at (0.5,-1.4) {$z'$};

\coordinate (ao) at (-0.5,2);
\coordinate (bo) at (0.5,2);

\begin{knot}[clip width=4]
\strand [blue, thick] (west1)
to [out=90,in=-180] (north1)
to [out=0,in=90] (east1)
to [out=-90,in=0] (south1);
\strand [thick, blue] (south1)
to [out=-180,in=-90] (west1);
\strand [blue, thick] (west)
to [out=90,in=-180] (north)
to [out=0,in=90] (east)
to [out=-90,in=0] (south)
to [out=-180,in=-90] (west);

\strand [thick] (a) to (ao);
\strand [thick] (b) to (bo);
\flipcrossings{2,4,5,8}
\end{knot}
\end{tikzpicture}
}
=
\hbox{
\begin{tikzpicture}[baseline=(current  bounding  box.center)]
\coordinate (west1) at (-1.5,-0.2);
\coordinate (north1) at (0,0.8);
\coordinate (east1) at (1.5,-0.2);
\coordinate (south1) at (0,-1);

\coordinate (west) at (-1,-0.2);
\coordinate (north) at (0,0.1);
\coordinate (east) at (1,-0.2);
\coordinate (south) at (0,-0.55);

\node (a) at (-0.5,-2.2) {$z$};
\node (b) at (0.5,-2.2) {$z'$};

\coordinate (ao) at (-0.5,1.5);
\coordinate (bo) at (0.5,1.5);

\begin{knot}[clip width=4]
\strand [blue, thick] (west1)
to [out=90,in=-180] (north1)
to [out=0,in=90] (east1)
to [out=-90,in=0] (south1);
\strand [thick, blue] (south1)
to [out=-180,in=-90] (west1);
\strand [blue, thick] (west)
to [out=90,in=-180] (north)
to [out=0,in=90] (east)
to [out=-90,in=0] (south)
to [out=-180,in=-90] (west);

\strand [thick] (a) to (ao);
\strand [thick] (b) to (bo);
\flipcrossings{1,4,5,8}
\end{knot}
\end{tikzpicture}
}
,
$$

where we used Lemma \ref{STringswitch} in the first step, to switch out under and over crossings on the top ring. The cloaking from Lemma \ref{STcloaking} allows us in the second to pass the lower part of the top ring across the lower ring. Now, we use that the loops are transparent (see Definition \ref{STtransparentdef}) to each other, as they are sums over objects of $\cat{A}\subset\dcentcat{A}$. This allows us to pull the larger loop out towards the right of the diagram, until it is completely separate from the rest of the diagram. This loop then evaluates to $1$, leaving us with the string diagram representation of $\Pi_{z,z'}$. This finishes the proof.
\end{proof}

\subsubsection{The Associated Subobject}
Given $z, z'\in\dcentcat{A}$, the idempotent $\Pi_{z,z'}$ from Lemma \ref{STidempotent} has an associated subobject that we will denote by $z\otimes_\Pi z'\in\dcentcat{A}$. Using the notation discussed in Section \ref{STidemnotation}, we introduce:
\begin{equation}\label{STincprojprop}
\hbox{
\begin{tikzpicture}[baseline=(current  bounding  box.center)]

\node (truw) at (-0.3,1.2){$z$};
\node (true) at (0.3,1.2){$z'$};

\node (tr1) at (0,2.2) {$\bigtriangleup$};
\coordinate (trd1w) at (-0.1,2.12);
\coordinate (trd1e) at (0.1,2.12);
\coordinate (tru1) at (0,2.35);
\node (c1) at (0,3) {$z\otimes_\Pi z'$};

\begin{knot}[clip width=4]
\strand [thick] (true) to [out=90, in=-70] (trd1e);
\strand [thick] (tru1) to (c1);
\strand [thick] (truw) to [out=90,in=-110] (trd1w);
\end{knot}
\end{tikzpicture}
}
\tn{ and }
\hbox{
\begin{tikzpicture}[baseline=(current  bounding  box.center)]

\node (cf) at (0,-0.5){$z\otimes_\Pi z'$};

\node (tr) at (0,0.1) {$\bigtriangledown$};
\coordinate (trd) at (0,0);
\coordinate (truw) at (-0.1,00.2);
\coordinate (true) at (0.1,0.2);

\node (trd1w) at (-0.3,1.12){$z$};
\node (trd1e) at (0.3,1.12){$z'$};

\begin{knot}[clip width=4]
\strand [thick] (cf) to (trd);
\strand [thick] (true) to [out=70, in=-90] (trd1e);
\strand [thick] (truw) to [out=110,in=-90] (trd1w);
\end{knot}
\end{tikzpicture}
},
\end{equation}
satisfying
\begin{equation}\label{STinclprojprops}
\hbox{
\begin{tikzpicture}[baseline=(current  bounding  box.center)]

\node (tr1) at (0,0.2) {$\bigtriangleup$};
\coordinate (trd1w2) at (-0.1,0.12);
\coordinate (trd1e2) at (0.1,0.12);
\coordinate (tru1) at (0,0.35);

\node (tr) at (0,1.6) {$\bigtriangledown$};
\coordinate (trd) at (0,1.5);
\coordinate (truw) at (-0.1,1.7);
\coordinate (true) at (0.1,1.7);

\node (trd1w) at (-0.3,2.2){$z$};
\node (trd1e) at (0.3,2.2){$z'$};

\node (a) at (-0.3,-0.5){$z$};
\node (b) at (0.3,-0.5){$z'$};

\begin{knot}[clip width=4]
\strand [thick] (tru1) to (trd);
\strand [thick] (true) to [out=70, in=-90] (trd1e);
\strand [thick] (truw) to [out=110,in=-90] (trd1w);
\strand [thick] (a) to [out=90,in=-110] (trd1w2);
\strand [thick] (b) to [out=90,in=-70] (trd1e2);
\end{knot}
\end{tikzpicture}
}
=
\hbox{
\begin{tikzpicture}[baseline=(current  bounding  box.center)]
\coordinate (west) at (-0.5,-0.3);
\coordinate (north) at (0,0);
\coordinate (east) at (0.5,-0.3);
\coordinate (south) at (0,-0.6);
\node (a) at (-0.25,-1.5) {$z$};
\node (b) at (0.25,-1.5) {$z'$};
\coordinate (ao) at (-0.25,1);
\coordinate (bo) at (0.25,1);

\begin{knot}[clip width=4]
\strand [blue, thick] (west)
to [out=90,in=-180] (north)
to [out=0,in=90] (east)
to [out=-90,in=0] (south)
to [out=-180,in=-90] (west);
\strand [thick] (a) to [out=90,in=-90] (ao);
\strand [thick] (b) to (bo);
\flipcrossings{1,4}
\end{knot}
\end{tikzpicture}
}
\tn{ and }
\hbox{
\begin{tikzpicture}[baseline=(current  bounding  box.center)]

\node (cf) at (0,-0.5){$z\otimes_\Pi z'$};

\node (tr) at (0,0.1) {$\bigtriangledown$};
\coordinate (trd) at (0,0);
\coordinate (truw) at (-0.1,00.2);
\coordinate (true) at (0.1,0.2);

\node (tr1) at (0,2.2) {$\bigtriangleup$};
\coordinate (trd1w) at (-0.1,2.12);
\coordinate (trd1e) at (0.1,2.12);
\coordinate (tru1) at (0,2.35);
\node (c1) at (0,3) {};

\begin{knot}[clip width=4]
\strand [thick] (cf) to (trd);
\strand [thick] (true) to [out=70, in=-70] (trd1e);
\strand [thick] (tru1) to (c1);
\strand [thick] (truw) to [out=110,in=-110] (trd1w);
\end{knot}
\end{tikzpicture}
}
=
\hbox{
\begin{tikzpicture}[baseline=(current  bounding  box.center)]

\node (inc) at (-0.5,-0.4){$z\otimes_\Pi z'$};

\coordinate (outg) at (-0.5,2.8);

\begin{knot}[clip width=4]
\strand [thick] (inc)  to [out=90, in =-90] (outg);
\end{knot}
\end{tikzpicture}
}.
\end{equation}
We have suppressed the labelling of the triangles by the idempotent $\Pi_{z,z'}$, and will henceforth use unlabelled triangles to denote the inclusions and projections for $\Pi_{z,z'}$.

The subobject associated to $\Pi_{z,z'}$ has the crucial property that the half-braidings associated to both factors agree, as is expressed by the following lemma.

\begin{lem}\label{STkeyprop}
Let $z, z'\in \dcentcat{A}$, then we have, with the notation as above:
$$
\hbox{
\begin{tikzpicture}[baseline=(current  bounding  box.center)]

\node (inc) at (-0.6,0.2){$a$};

\node (truw) at (-0.2,00.2){$z$};
\node (true) at (0.2,0.2){$z'$};

\node (tr1) at (0,2.2) {$\bigtriangleup$};
\coordinate (trd1w) at (-0.1,2.12);
\coordinate (trd1e) at (0.1,2.12);
\coordinate (tru1) at (0,2.35);
\node (c1) at (0,3) {};

\coordinate (rc) at (0.6,2);
\coordinate (outg) at (0.6,2.8);

\begin{knot}[clip width=4]
\strand [thick] (true) to [out=90, in=-70] (trd1e);
\strand [thick] (tru1) to (c1);
\strand [thick] (inc)  to [out=90, in =-90] (rc) to [out=90, in=-90] (outg);
\strand [thick] (truw) to [out=90,in=-110] (trd1w);
\end{knot}
\end{tikzpicture}
}
=
\hbox{
\begin{tikzpicture}[baseline=(current  bounding  box.center)]

\node (inc) at (-0.6,0.2){$a$};

\node (truw) at (-0.2,00.2){$z$};
\node (true) at (0.2,0.2){$z'$};

\node (tr1) at (0,2.2) {$\bigtriangleup$};
\coordinate (trd1w) at (-0.1,2.12);
\coordinate (trd1e) at (0.1,2.12);
\coordinate (tru1) at (0,2.35);
\node (c1) at (0,3) {};

\coordinate (rc) at (0.6,2);
\coordinate (outg) at (0.6,2.8);

\begin{knot}[clip width=4]
\strand [thick] (truw) to [out=90,in=-110] (trd1w);
\strand [thick] (tru1) to (c1);
\strand [thick] (inc)  to [out=90, in=-90] (rc) to [out=90, in=-90] (outg);
\strand [thick] (true) to [out=90, in=-70] (trd1e);
\flipcrossings{}
\end{knot}
\end{tikzpicture}
}
\mbox{ and }
\hbox{
\begin{tikzpicture}[baseline=(current  bounding  box.center)]

\coordinate (cf) at (0,-0.5);
\node (inc) at (-0.6,-0.4){$a$};

\coordinate  (rc) at (-0.6,0);

\node (tr) at (0,0.1) {$\bigtriangledown$};
\coordinate (trd) at (0,0);
\coordinate (truw) at (-0.1,00.2);
\coordinate (true) at (0.1,0.2);

\node (trd1w) at (-0.2,2.12){$z$};
\node (trd1e) at (0.2,2.12){$z'$};

\coordinate (outg) at (0.6,2.12);

\begin{knot}[clip width=4]
\strand [thick] (cf) to (trd);
\strand [thick] (true) to [out=70, in=-90] (trd1e);
\strand [thick] (inc)  to [out=90, in =-90] (rc) to [out=90, in =-90] (outg);
\strand [thick] (truw) to [out=110,in=-90] (trd1w);
\end{knot}
\end{tikzpicture}
}
=
\hbox{
\begin{tikzpicture}[baseline=(current  bounding  box.center)]

\coordinate (cf) at (0,-0.5);
\node (inc) at (-0.6,-0.4){$a$};

\coordinate  (rc) at (-0.6,0);

\node (tr) at (0,0.1) {$\bigtriangledown$};
\coordinate (trd) at (0,0);
\coordinate (truw) at (-0.1,00.2);
\coordinate (true) at (0.1,0.2);

\node (trd1w) at (-0.2,2.12){$z$};
\node (trd1e) at (0.2,2.12){$z'$};

\coordinate (outg) at (0.6,2.12);

\begin{knot}[clip width=4]
\strand [thick] (cf) to (trd);
\strand [thick] (true) to [out=70, in=-90] (trd1e);
\strand [thick] (inc)  to [out=90, in =-90] (rc) to [out=90, in =-90] (outg);
\strand [thick] (truw) to [out=110,in=-90] (trd1w);
\flipcrossings{1,2}
\end{knot}
\end{tikzpicture}
},
$$
for any $a\in\cat{A}$. 

\end{lem}
\begin{proof}
We prove one of the relations, the other is similar. Using both the conditions (Equation \ref{STinclprojprops}) on the projection and inclusion in the first identity to get the ring, we see that:
\begin{equation}\label{STtralala}
\hbox{
\begin{tikzpicture}[baseline=(current  bounding  box.center)]

\coordinate (cf) at (0,-0.5);
\node (inc) at (-0.6,-0.4){$a$};

\node (truw) at (-0.2,-0.4){$z$};
\node (true) at (0.2,-0.4){$z'$};

\node (tr1) at (0,2.2) {$\bigtriangleup$};
\coordinate (trd1w) at (-0.1,2.12);
\coordinate (trd1e) at (0.1,2.12);
\coordinate (tru1) at (0,2.35);
\node (c1) at (0,3) {};

\coordinate (outg) at (0.5,2.8);

\begin{knot}[clip width=4]
\strand [thick] (truw) to [out=90,in=-110] (trd1w);
\strand [thick] (tru1) to (c1);
\strand [thick] (inc)  to [out=90, in=-90] (outg);
\strand [thick] (true) to [out=90, in=-70] (trd1e);
\flipcrossings{}
\end{knot}
\end{tikzpicture}
}
=
\hbox{
\begin{tikzpicture}[baseline=(current  bounding  box.center)]

\coordinate (cf) at (0,-0.5);
\node (inc) at (-0.6,-0.4){$a$};

\node (truw) at (-0.2,-0.4){$z$};
\node (true) at (0.2,-0.4){$z'$};

\coordinate (west) at (-0.6,1.4);
\coordinate (north) at (0,1.6);
\coordinate (east) at (0.6,1.4);
\coordinate (south) at (0, 1.2);

\node (tr1) at (0,2.2) {$\bigtriangleup$};
\coordinate (trd1w) at (-0.1,2.12);
\coordinate (trd1e) at (0.1,2.12);
\coordinate (tru1) at (0,2.35);
\node (c1) at (0,3) {};

\coordinate (outg) at (1,2.8);
\coordinate (rctrl) at (1,1.1);
\coordinate (lctrl) at (-0.6,0.3);

\coordinate (rwall) at (0.3,1.05);

\begin{knot}[clip width=4]
\strand [blue, thick] (west) to [out=90,in=180] (north) to [out=0,in=90] (east) to [out=-90,in=0] (south) to [out=180,in=-90] (west);
\strand [thick] (truw) to [out=90,in=-110] (trd1w);
\strand [thick] (tru1) to (c1);
\strand [thick] (inc) to [out=90, in =-90] (lctrl) to [out=90, in =-90] (rctrl) to (outg);
\strand [thick] (true)to [out=90,in=-70] (trd1e);
\flipcrossings{1,4}
\end{knot}
\end{tikzpicture}
}
=
\hbox{
\begin{tikzpicture}[baseline=(current  bounding  box.center)]

\coordinate (cf) at (0,-0.5);
\node (inc) at (-0.6,-0.4){$a$};

\node (truw) at (-0.2,-0.4){$z$};
\node (true) at (0.2,-0.4){$z'$};

\coordinate (west) at (-0.6,0.8);
\coordinate (north) at (0,1);
\coordinate (east) at (0.6,0.8);
\coordinate (south) at (0, 0.6);

\node (tr1) at (0,2.2) {$\bigtriangleup$};
\coordinate (trd1w) at (-0.1,2.12);
\coordinate (trd1e) at (0.1,2.12);
\coordinate (tru1) at (0,2.35);
\node (c1) at (0,3) {};

\coordinate (outg) at (0.5,2.8);
\coordinate (rctrl) at (0.5,1.7);
\coordinate (lctrl) at (-1,0.9);

\coordinate (rwall) at (0.3,1.05);

\begin{knot}[clip width=4]
\strand [blue, thick] (west) to [out=90,in=180] (north) to [out=0,in=90] (east) to [out=-90,in=0] (south) to [out=180,in=-90] (west);
\strand [thick] (truw) to [out=90,in=-110] (trd1w);
\strand [thick] (tru1) to (c1);
\strand [thick] (inc) to [out=90, in =-90] (lctrl) to [out=90, in =-90] (rctrl) to (outg);
\strand [thick] (true)to [out=90,in=-70] (trd1e);
\flipcrossings{1,4}
\end{knot}
\end{tikzpicture}
}
=
\hbox{
\begin{tikzpicture}[baseline=(current  bounding  box.center)]

\coordinate (cf) at (0,-0.5);
\node (inc) at (-0.6,-0.4){$a$};

\node (truw) at (-0.2,-0.4){$z$};
\node (true) at (0.2,-0.4){$z'$};

\coordinate (west) at (-0.6,1.4);
\coordinate (north) at (0,1.6);
\coordinate (east) at (0.6,1.4);
\coordinate (south) at (0, 1.2);

\node (tr1) at (0,2.2) {$\bigtriangleup$};
\coordinate (trd1w) at (-0.1,2.12);
\coordinate (trd1e) at (0.1,2.12);
\coordinate (tru1) at (0,2.35);
\node (c1) at (0,3) {};

\coordinate (outg) at (1,2.8);
\coordinate (rctrl) at (1,1.1);
\coordinate (lctrl) at (-0.6,0.3);

\coordinate (rwall) at (0.3,1.05);

\begin{knot}[clip width=4]
\strand [blue, thick] (west) to [out=90,in=180] (north) to [out=0,in=90] (east) to [out=-90,in=0] (south) to [out=180,in=-90] (west);
\strand [thick] (truw) to [out=90,in=-110] (trd1w);
\strand [thick] (tru1) to (c1);
\strand [thick] (inc) to [out=90, in =-90] (lctrl) to [out=90, in =-90] (rctrl) to (outg);
\strand [thick] (true)to [out=90,in=-70] (trd1e);
\flipcrossings{1,4,5,6}
\end{knot}
\end{tikzpicture}
}
=
\hbox{
\begin{tikzpicture}[baseline=(current  bounding  box.center)]

\coordinate (cf) at (0,-0.5);
\node (inc) at (-0.6,-0.4){$a$};

\node (truw) at (-0.2,-0.4){$z$};
\node (true) at (0.2,-0.4){$z'$};

\node (tr1) at (0,2.2) {$\bigtriangleup$};
\coordinate (trd1w) at (-0.1,2.12);
\coordinate (trd1e) at (0.1,2.12);
\coordinate (tru1) at (0,2.35);
\node (c1) at (0,3) {};

\coordinate (outg) at (0.5,2.8);

\begin{knot}[clip width=4]
\strand [thick] (truw) to [out=90,in=-110] (trd1w);
\strand [thick] (tru1) to (c1);
\strand [thick] (inc)  to [out=90, in=-90] (outg);
\strand [thick] (true) to [out=90, in=-70] (trd1e);
\flipcrossings{1,2}
\end{knot}
\end{tikzpicture}
}
,
\end{equation}
using the fact that the loop is transparent to the $a$-strand (as they are both labelled by objects of $\cat{A}$) in the second identity to push the loop down past the strand. The third equality using cloaking from Lemma \ref{STcloaking} to flip the crossings on the $a$-strand, while pushing the loop back up. The final equality comes from reverse of the step done in the first.
\end{proof}

\subsection{The Symmetric Tensor Product on Objects}
\subsubsection{Definition on Objects}

\begin{df}\label{STsymtensobj}
Let $z,z'\in\dcentcat{A}$, and write $\Phi:\dcentcat{A}\rar\cat{A}$ for the forgetful functor (cf. Definition \ref{STdrinfeldcenterdef}). The \emph{symmetric tensor product} $z\otimes_s z'\in \dcentcat{A}$ of $z$ and $z'$ is the object $(\Phi(z\otimes_{\Pi} z'),\beta)$, where $z\otimes_{\Pi} z'$ is the subobject associated to $\Pi_{z,z'}$, and $\beta$ is the half-braiding with components, for $a\in\cat{A}$:
\begin{equation}\label{SThalfbraid}
\beta_a=
\hbox{
\begin{tikzpicture}[baseline=(current  bounding  box.center)]

\node (cf) at (0,-0.5){$z\otimes_s z'$};
\node (inc) at (-0.75,-0.5){$a$};

\node (c1) at (0,3) {};

\coordinate (outg) at (0.75,2.8);

\begin{knot}[clip width=4]
\strand [thick] (cf) to (c1);
\strand [thick] (inc)  to [out=90, in =-90] (outg);
\end{knot}
\end{tikzpicture}
}
:=
\hbox{
\begin{tikzpicture}[baseline=(current  bounding  box.center)]

\node (cf) at (0,-0.5){$z\otimes_s z'$};
\node (inc) at (-0.75,-0.5){$a$};

\node (tr) at (0,0.1) {$\bigtriangledown$};
\coordinate (trd) at (0,0);
\coordinate (truw) at (-0.1,00.2);
\coordinate (true) at (0.1,0.2);

\node (tr1) at (0,2.2) {$\bigtriangleup$};
\coordinate (trd1w) at (-0.1,2.12);
\coordinate (trd1e) at (0.1,2.12);
\coordinate (tru1) at (0,2.35);
\node (c1) at (0,3) {};

\coordinate (outg) at (0.75,2.8);

\begin{knot}[clip width=4]
\strand [thick] (cf) to (trd);
\strand [thick] (true) to [out=70, in=-70] (trd1e);
\strand [thick] (tru1) to (c1);
\strand [thick] (inc)  to [out=90, in =-90] (outg);
\strand [thick] (truw) to [out=110,in=-110] (trd1w);
\end{knot}
\end{tikzpicture}
}
=
\hbox{
\begin{tikzpicture}[baseline=(current  bounding  box.center)]

\node (cf) at (0,-0.5){$z\otimes_s z'$};
\node (inc) at (-0.75,-0.5){$a$};

\node (tr) at (0,0.1) {$\bigtriangledown$};
\coordinate (trd) at (0,0);
\coordinate (truw) at (-0.1,00.2);
\coordinate (true) at (0.1,0.2);

\node (tr1) at (0,2.2) {$\bigtriangleup$};
\coordinate (trd1w) at (-0.1,2.12);
\coordinate (trd1e) at (0.1,2.12); 
\coordinate (tru1) at (0,2.35);
\node (c1) at (0,3) {};

\coordinate (outg) at (0.5,2.8);

\begin{knot}[clip width=4]
\strand [thick] (cf) to (trd);
\strand [thick] (truw) to [out=110,in=-110] (trd1w);
\strand [thick] (tru1) to (c1);
\strand [thick] (inc)  to [out=90, in=-90] (outg);
\strand [thick] (true) to [out=70, in=-70] (trd1e);
\flipcrossings{}
\end{knot}
\end{tikzpicture}
} ,
\end{equation}
where the equality is a consequence of Lemma \ref{STkeyprop}.
\end{df}

We observe that the $\beta_{a}$ indeed satisfy the hexagon equation (see Definition \ref{STdrinfeldcenterdef} of the Drinfeld centre), which in this case reads $\beta_{aa'}=(\beta_{a}\otimes\id_{a'})\circ(\id_a\otimes\beta_{a'})$:
$$
\hbox{
\begin{tikzpicture}[baseline=(current  bounding  box.center)]

\coordinate (cf) at (0,-0.5);
\coordinate (inc) at (-0.5,-0.4);
\coordinate (inc1) at (-0.6,-0.4);

\node (tr) at (0,0.1) {$\bigtriangledown$};
\coordinate (trd) at (0,0);
\coordinate (truw) at (-0.1,00.2);
\coordinate (true) at (0.1,0.2);

\node (tr1) at (0,2.2) {$\bigtriangleup$};
\coordinate (trd1w) at (-0.1,2.12);
\coordinate (trd1e) at (0.1,2.12); 
\coordinate (tru1) at (0,2.35);
\node (c1) at (0,3) {};

\coordinate (outg) at (0.6,2.8);
\coordinate (outg1) at (0.5,2.8);

\begin{knot}[clip width=4]
\strand [thick] (cf) to (trd);
\strand [thick] (truw) to [out=110,in=-110] (trd1w);
\strand [thick] (tru1) to (c1);
\strand [thick] (inc)  to [out=90, in=-90] (outg);
\strand [thick] (inc1)  to [out=90, in=-90] (outg1);
\strand [thick] (true) to [out=70, in=-70] (trd1e);
\flipcrossings{}
\end{knot}
\end{tikzpicture}
}
=
\hbox{
\begin{tikzpicture}[baseline=(current  bounding  box.center)]

\coordinate (cf) at (0,-0.5);
\node (inc) at (-0.4,-0.4){};

\node (tr) at (0,0.1) {$\bigtriangledown$};
\coordinate (trd) at (0,0);
\coordinate (truw) at (-0.1,00.2);
\coordinate (true) at (0.1,0.2);

\node (tr1) at (0,2.2) {$\bigtriangleup$};
\coordinate (trd1w) at (-0.1,2.12);
\coordinate (trd1e) at (0.1,2.12); 
\coordinate (tru1) at (0,2.35);

\coordinate (rc) at (0.7,2.8);
\coordinate (outg) at (0.7,6);

\node (inc1) at (-0.6,-0.4){};
\coordinate (lc) at (-0.6,2);

\node (tr2) at (0,3.1) {$\bigtriangledown$};
\coordinate (trd2) at (0,3);
\coordinate (truw2) at (-0.1,3.2);
\coordinate (true2) at (0.1,3.2);

\node (tr12) at (0,5.2) {$\bigtriangleup$};
\coordinate (trd1w2) at (-0.1,5.12);
\coordinate (trd1e2) at (0.1,5.12); 
\coordinate (tru12) at (0,5.35);
\node (c1) at (0,6) {};

\coordinate (outg1) at (0.5,6);

\begin{knot}[clip width=4]
\strand [thick] (cf) to (trd);
\strand [thick] (truw) to [out=110,in=-110] (trd1w);
\strand [thick] (inc)  to [out=90, in=-90] (rc) to [out=90, in=-90] (outg);
\strand [thick] (true) to [out=70, in=-70] (trd1e);
\strand [thick] (tru1) to (trd2);
\strand [thick] (truw2) to [out=110,in=-110] (trd1w2);
\strand [thick] (tru12) to (c1);
\strand [thick] (inc1)  to [out=90, in=-90] (lc) to [out=90, in=-90] (outg1);
\strand [thick] (true2) to [out=70, in=-70] (trd1e2);
\flipcrossings{}
\end{knot}
\end{tikzpicture}
},
$$
using Equation \ref{STinclprojprops} and cloaking (Lemma \ref{STcloaking}).

Lemma \ref{STkeyprop} ensures this definition of the half-braiding for $z\otimes_s z'$ does not depend on a choice between $z$ and $z'$. It should be noted that that the inclusion and projection for $\Pi_{z,z'}$ do not commute with the half-braiding (so in particular $z\otimes_s z'$ is not a subobject of $z\otimes_c z'$ in $\dcentcat{A}$, but $z\otimes_{\Pi} z'$ is). Instead we have the following relation that we will call \emph{slicing}. Note that this lemma expresses equalities between morphisms in $\cat{A}$.

\begin{lem}[Slicing]\label{STslicing}
The half-braiding for $z\otimes_s z'$ and the inclusion and projection maps for $\Pi_{z,z'}$ interact as follows:
$$
\hbox{
\begin{tikzpicture}[baseline=(current  bounding  box.center)]

\coordinate (c) at (-0.6,1);
\coordinate (a) at (-0.3,1);
\coordinate (b) at (0.3,1);

\coordinate (ctrl) at (-0.6,1.9);

\node (tr1) at (0,1.8) {$\bigtriangleup$};
\coordinate (trd1w) at (-0.1,1.72);
\coordinate (trd1e) at (0.1,1.72);
\coordinate (tru1) at (0,1.95);
\coordinate (c1) at (0,3.2);

\coordinate (co) at (0.6,3.2);

\begin{knot}[clip width=4]
\strand [thick] (c) to [out=90,in=-90] (ctrl) to [out=90,in=-90] (co);
\strand [thick] (b) to [out=90, in=-70] (trd1e);
\strand [thick] (tru1) to (c1);
\strand [thick] (a) to [out=90,in=-110] (trd1w);
\flipcrossings{1}
\end{knot}
\end{tikzpicture}
}
=
\hbox{
\begin{tikzpicture}[baseline=(current  bounding  box.center)]

\coordinate (c) at (-0.6,0.8);
\coordinate (a) at (-0.3,0.8);
\coordinate (b) at (0.3,0.8);

\coordinate (ctrl) at (0.6,2.3);

\node (tr1) at (0,2.2) {$\bigtriangleup$};
\coordinate (trd1w) at (-0.1,2.12);
\coordinate (trd1e) at (0.1,2.12);
\coordinate (tru1) at (0,2.35);
\coordinate (c1) at (0,3);

\coordinate (co) at (0.6,3);

\begin{knot}[clip width=4]
\strand [thick] (c) to [out=90,in=-90] (ctrl) to [out=90,in=-90] (co);
\strand [thick] (b) to [out=90, in=-70] (trd1e);
\strand [thick] (tru1) to (c1);
\strand [thick] (a) to [out=90,in=-110] (trd1w);
\flipcrossings{1}
\end{knot}
\end{tikzpicture}
}
=
\hbox{
\begin{tikzpicture}[baseline=(current  bounding  box.center)]

\coordinate (c) at (-0.6,0.8);
\coordinate (a) at (-0.3,0.8);
\coordinate (b) at (0.3,0.8);

\coordinate (ctrl) at (0.6,2.3);

\node (tr1) at (0,2.2) {$\bigtriangleup$};
\coordinate (trd1w) at (-0.1,2.12);
\coordinate (trd1e) at (0.1,2.12);
\coordinate (tru1) at (0,2.35);
\coordinate (c1) at (0,3);

\coordinate (co) at (0.6,3);

\begin{knot}[clip width=4]
\strand [thick] (c) to [out=90,in=-90] (ctrl) to [out=90,in=-90] (co);
\strand [thick] (b) to [out=90, in=-70] (trd1e);
\strand [thick] (tru1) to (c1);
\strand [thick] (a) to [out=90,in=-110] (trd1w);
\flipcrossings{2}
\end{knot}
\end{tikzpicture}
}
$$
and
$$
\rotatebox{180}{
\begin{tikzpicture}[baseline=(current  bounding  box.center)]

\coordinate (c) at (-0.6,1);
\coordinate (a) at (-0.3,1);
\coordinate (b) at (0.3,1);

\coordinate (ctrl) at (-0.6,1.9);

\node (tr1) at (0,1.8) {$\bigtriangleup$};
\coordinate (trd1w) at (-0.1,1.72);
\coordinate (trd1e) at (0.1,1.72);
\coordinate (tru1) at (0,1.95);
\coordinate (c1) at (0,3.2);

\coordinate (co) at (0.6,3.2);

\begin{knot}[clip width=4]
\strand [thick] (c) to [out=90,in=-90] (ctrl) to [out=90,in=-90] (co);
\strand [thick] (b) to [out=90, in=-70] (trd1e);
\strand [thick] (tru1) to (c1);
\strand [thick] (a) to [out=90,in=-110] (trd1w);
\flipcrossings{1}
\end{knot}
\end{tikzpicture}
}
=
\rotatebox{180}{
\begin{tikzpicture}[baseline=(current  bounding  box.center)]

\coordinate (c) at (-0.6,0.8);
\coordinate (a) at (-0.3,0.8);
\coordinate (b) at (0.3,0.8);

\coordinate (ctrl) at (0.6,2.3);

\node (tr1) at (0,2.2) {$\bigtriangleup$};
\coordinate (trd1w) at (-0.1,2.12);
\coordinate (trd1e) at (0.1,2.12);
\coordinate (tru1) at (0,2.35);
\coordinate (c1) at (0,3);

\coordinate (co) at (0.6,3);

\begin{knot}[clip width=4]
\strand [thick] (c) to [out=90,in=-90] (ctrl) to [out=90,in=-90] (co);
\strand [thick] (b) to [out=90, in=-70] (trd1e);
\strand [thick] (tru1) to (c1);
\strand [thick] (a) to [out=90,in=-110] (trd1w);
\flipcrossings{1}
\end{knot}
\end{tikzpicture}
}
=
\rotatebox{180}{
\begin{tikzpicture}[baseline=(current  bounding  box.center)]

\coordinate (c) at (-0.6,0.8);
\coordinate (a) at (-0.3,0.8);
\coordinate (b) at (0.3,0.8);

\coordinate (ctrl) at (0.6,2.3);

\node (tr1) at (0,2.2) {$\bigtriangleup$};
\coordinate (trd1w) at (-0.1,2.12);
\coordinate (trd1e) at (0.1,2.12);
\coordinate (tru1) at (0,2.35);
\coordinate (c1) at (0,3);

\coordinate (co) at (0.6,3);

\begin{knot}[clip width=4]
\strand [thick] (c) to [out=90,in=-90] (ctrl) to [out=90,in=-90] (co);
\strand [thick] (b) to [out=90, in=-70] (trd1e);
\strand [thick] (tru1) to (c1);
\strand [thick] (a) to [out=90,in=-110] (trd1w);
\flipcrossings{2}
\end{knot}
\end{tikzpicture}
},
$$
where the diagonal strand is labelled by an object of $\cat{A}$.
\end{lem}

\begin{proof}
Plugging in the definition of the half-braiding (Equation \eqref{SThalfbraid}) as the first equality, we have: 
$$
\hbox{
\begin{tikzpicture}[baseline=(current  bounding  box.center)]

\coordinate (c) at (-0.6,1);
\coordinate (a) at (-0.3,1);
\coordinate (b) at (0.3,1);

\coordinate (ctrl) at (-0.6,1.9);

\node (tr1) at (0,1.8) {$\bigtriangleup$};
\coordinate (trd1w) at (-0.1,1.72);
\coordinate (trd1e) at (0.1,1.72);
\coordinate (tru1) at (0,1.95);
\coordinate (c1) at (0,3.2);

\coordinate (co) at (0.6,3.2);

\begin{knot}[clip width=4]
\strand [thick] (c) to [out=90,in=-90] (ctrl) to [out=90,in=-90] (co);
\strand [thick] (b) to [out=90, in=-70] (trd1e);
\strand [thick] (tru1) to (c1);
\strand [thick] (a) to [out=90,in=-110] (trd1w);
\flipcrossings{1}
\end{knot}
\end{tikzpicture}
}
=
\hbox{
\begin{tikzpicture}[baseline=(current  bounding  box.center)]

\coordinate (c) at (-0.6,1);
\coordinate (a) at (-0.3,1);
\coordinate (b) at (0.3,1);

\coordinate (ctrl) at (-0.6,2.7);
\coordinate (rctrl) at (0.6,3.6);

\node (tr1) at (0,1.8) {$\bigtriangleup$};
\coordinate (trd1w) at (-0.1,1.72);
\coordinate (trd1e) at (0.1,1.72);
\coordinate (tru1) at (0,1.95);

\node (tr2) at (0,2.3) {$\bigtriangledown$};
\coordinate (trd2) at (0,2.2);
\coordinate (truw2) at (-0.1,2.4);
\coordinate (true2) at (0.1,2.4);

\node (tr3) at (0,3.7) {$\bigtriangleup$};
\coordinate (trdw3) at (-0.1,3.62);
\coordinate (trde3) at (0.1,3.62);
\coordinate (tru3) at (0,3.85);
\coordinate (c1) at (0,4);

\coordinate (co) at (0.6,4);

\begin{knot}[clip width=4]
\strand [thick] (c) to [out=90,in=-90] (ctrl) to [out=90,in=-90] (rctrl) to [out=90,in=-90] (co);
\strand [thick] (b) to [out=90, in=-70] (trd1e);
\strand [thick] (tru1) to (trd2);
\strand [thick] (truw2) to [out=110,in=-110] (trdw3);
\strand [thick] (true2) to [out=70,in=-70] (trde3);
\strand [thick] (tru3) to (c1);
\strand [thick] (a) to [out=90,in=-110] (trd1w);
\flipcrossings{1}
\end{knot}
\end{tikzpicture}
}
=
\hbox{
\begin{tikzpicture}[baseline=(current  bounding  box.center)]

\coordinate (c) at (-0.6,1);
\coordinate (a) at (-0.3,1);
\coordinate (b) at (0.3,1);

\coordinate (ctrl) at (-0.6,1.7);
\coordinate (rctrl) at (0.6,2.6);

\coordinate (west) at (-0.5,2.9);
\coordinate (north) at (0,3.2);
\coordinate (east) at (0.5,2.9);
\coordinate (south) at (0,2.6);

\node (tr3) at (0,3.7) {$\bigtriangleup$};
\coordinate (trdw3) at (-0.1,3.62);
\coordinate (trde3) at (0.1,3.62);
\coordinate (tru3) at (0,3.85);
\coordinate (c1) at (0,4);

\coordinate (co) at (0.6,4);

\begin{knot}[clip width=4]
\strand [blue, thick] (west) to [out=90,in=180] (north) to [out=0,in=90] (east) to [out=-90,in=0] (south) to [out=180,in=-90] (west);
\strand [thick] (c) to [out=90,in=-90] (ctrl) to [out=90,in=-90] (rctrl) to [out=90,in=-90] (co);
\strand [thick] (b) to [out=90, in=-70] (trde3);
\strand [thick] (tru3) to (c1);
\strand [thick] (a) to [out=90,in=-110] (trdw3);
\flipcrossings{3,2,4}
\end{knot}
\end{tikzpicture}
}
=
\hbox{
\begin{tikzpicture}[baseline=(current  bounding  box.center)]

\coordinate (c) at (-0.6,0.8);
\coordinate (a) at (-0.3,0.8);
\coordinate (b) at (0.3,0.8);

\coordinate (ctrl) at (0.6,2.3);

\node (tr1) at (0,2.2) {$\bigtriangleup$};
\coordinate (trd1w) at (-0.1,2.12);
\coordinate (trd1e) at (0.1,2.12);
\coordinate (tru1) at (0,2.35);
\coordinate (c1) at (0,3);

\coordinate (co) at (0.6,3);

\begin{knot}[clip width=4]
\strand [thick] (c) to [out=90,in=-90] (ctrl) to [out=90,in=-90] (co);
\strand [thick] (b) to [out=90, in=-70] (trd1e);
\strand [thick] (tru1) to (c1);
\strand [thick] (a) to [out=90,in=-110] (trd1w);
\flipcrossings{1}
\end{knot}
\end{tikzpicture}
},
$$
where in the second equality we used Equation \eqref{STincprojprop} to combine the inclusion-projection pair to a ring, followed by an application of cloaking to pass the loop up at the cost of swapping crossings on the strand. The loop cancels with the projection, like in the last step of Equation \eqref{STtralala}.
\end{proof}

\subsubsection{Symmetry of the Symmetric Tensor Product}
The symmetric tensor product is indeed symmetric:
\begin{lem}\label{STsymlemma}
The symmetry in $\cat{A}$ induces an isomorphism between $z\otimes_s z'$ and $z \otimes_s z'$. That is, using the triangle notation for the inclusions and projections,
\begin{equation}\label{STsymmorph}
\hbox{
\begin{tikzpicture}[baseline=(current  bounding  box.center)]

\node (cf) at (0,-0.5){$z\otimes_s z'$};

\node (tr) at (0,0.1) {$\bigtriangledown$};
\coordinate (trd) at (0,0);
\coordinate (truw) at (-0.1,00.2);
\coordinate (true) at (0.1,0.2);

\node (tr1) at (0,1.2) {$\bigtriangleup$};
\coordinate (trd1w) at (-0.1,1.12);
\coordinate (trd1e) at (0.1,1.12);
\coordinate (tru1) at (0,1.35);
\node (c1) at (0,2) {$z'\otimes_s z$};

\begin{knot}[clip width=4]
\strand [thick] (cf) to (trd);
\strand [thick] (true) to [out=70, in=-110] (trd1w);
\strand [thick] (tru1) to (c1);
\end{knot}
\draw [thick] (truw) to [out=110,in=-70] (trd1e);
\end{tikzpicture}
}
\mbox{ and }
\hbox{
\begin{tikzpicture}[baseline=(current  bounding  box.center)]

\node (cf) at (0,-0.5){$z'\otimes_s z$};

\node (tr) at (0,0.1) {$\bigtriangledown$};
\coordinate (trd) at (0,0);
\coordinate (truw) at (-0.1,00.2);
\coordinate (true) at (0.1,0.2);

\node (tr1) at (0,1.2) {$\bigtriangleup$};
\coordinate (trd1w) at (-0.1,1.12);
\coordinate (trd1e) at (0.1,1.12);
\coordinate (tru1) at (0,1.35);
\node (c1) at (0,2) {$z\otimes_s z'$};

\begin{knot}[clip width=4]
\strand [thick] (cf) to (trd);
\strand [thick] (true) to [out=70, in=-110] (trd1w);
\strand [thick] (tru1) to (c1);
\end{knot}
\draw [thick] (truw) to [out=110,in=-70] (trd1e);
\end{tikzpicture}
}
\end{equation}
are mutually inverse morphisms in $\dcentcat{A}$.
\end{lem}

\begin{proof}
We will first establish that the symmetry morphisms are mutually inverse in $\cat{A}$, then we will prove they lift to morphisms in $\dcentcat{A}$. Consider the composite 
$$
\hbox{
\begin{tikzpicture}[baseline=(current  bounding  box.center)]

\node (cf) at (0,-0.5){$z\otimes_s z'$};
\node (tr) at (0,0.1) {$\bigtriangledown$};
\coordinate (trd) at (0,0);
\coordinate (truw) at (-0.1,00.2);
\coordinate (true) at (0.1,0.2);
\node (tr1) at (0,1.2) {$\bigtriangleup$};
\coordinate (trd1w) at (-0.1,1.12);
\coordinate (trd1e) at (0.1,1.12);
\coordinate (tru1) at (0,1.35);

\node (tra) at (0,1.6) {$\bigtriangledown$};
\coordinate (trda) at (0,1.5);
\coordinate (truwa) at (-0.1,1.7);
\coordinate (truea) at (0.1,1.7);
\node (tr1a) at (0,2.7) {$\bigtriangleup$};
\coordinate (trd1wa) at (-0.1,2.62);
\coordinate (trd1ea) at (0.1,2.62);
\coordinate (tru1a) at (0,2.85);
\node (c1) at (0,3.5) {$z\otimes_s z'$};

\begin{knot}[clip width=4]
\strand [thick] (cf) to (trd);
\strand [thick] (true) to [out=70, in=-110] (trd1w);
\strand [thick] (truea) to [out=70, in=-110] (trd1wa);
\strand [thick] (tru1a) to (c1);
\end{knot}
\draw [thick] (truw) to [out=110,in=-70] (trd1e);
\draw [thick] (truwa) to [out=110,in=-70] (trd1ea);
\draw [thick] (tru1) to (trda);
\end{tikzpicture}
}
=
\hbox{
\begin{tikzpicture}[baseline=(current  bounding  box.center)]

\node (cf) at (0,-0.5){$z\otimes_s z'$};
\node (tr) at (0,0.1) {$\bigtriangledown$};
\coordinate (trd) at (0,0);
\coordinate (truw) at (-0.1,00.2);
\coordinate (true) at (0.1,0.2);

\node (tr1a) at (0,2.9) {$\bigtriangleup$};
\coordinate (trd1wa) at (-0.1,2.82);
\coordinate (trd1ea) at (0.1,2.82);
\coordinate (tru1a) at (0,3.05);
\node (c1) at (0,3.5) {$z\otimes_s z'$};

\coordinate (miduw) at (-0.25,1.61);
\coordinate (midue) at (0.25,1.61);

\coordinate (middw) at (-0.25,1.01);
\coordinate (midde) at (0.25,1.01);

\coordinate (west) at (-0.5,1.3);
\coordinate (north) at (0,1.6);
\coordinate (east) at (0.5,1.3);
\coordinate (south) at (0, 1.0);

\begin{knot}[clip width=4]
\strand [blue, thick] (west) to [out=90,in=180] (north) to [out=0,in=90] (east) to [out=-90,in=0] (south) to [out=180,in=-90] (west);
\strand [thick] (cf) to (trd);
\strand [thick] (middw) to [out=90,in=-90] (miduw) to [out=90,in=-70] (trd1ea);
\strand [thick] (truw) to [out=110,in=-90] (midde) to (midue);
\strand [thick] (tru1a) to (c1);
\flipcrossings{1,4}
\end{knot}
\draw [thick] (true) to [out=70,in=-90] (middw);
\draw [thick] (midue) to [out=90,in=-110] (trd1wa);
\end{tikzpicture}
}
=
\hbox{
\begin{tikzpicture}[baseline=(current  bounding  box.center)]

\node (cf) at (0,-0.5){$z\otimes_s z'$};
\node (tr) at (0,0.1) {$\bigtriangledown$};
\coordinate (trd) at (0,0);
\coordinate (truw) at (-0.1,00.2);
\coordinate (true) at (0.1,0.2);

\node (tr1a) at (0,2.9) {$\bigtriangleup$};
\coordinate (trd1wa) at (-0.1,2.82);
\coordinate (trd1ea) at (0.1,2.82);
\coordinate (tru1a) at (0,3.05);
\node (c1) at (0,3.5) {$z\otimes_s z'$};

\coordinate (miduw) at (-0.25,2);
\coordinate (midue) at (0.25,2);

\coordinate (middw) at (-0.25,0.9);
\coordinate (midde) at (0.25,0.9);

\coordinate (west) at (-0.5,1.1);
\coordinate (north) at (-0.1,1.5);
\coordinate (east) at (0.5,1.8);
\coordinate (south) at (0.1, 1.4);

\coordinate (sd) at (0.1,0.3);
\coordinate (nd) at (-0.1,2.6);

\begin{knot}[clip width=4]
\strand [blue, thick] (west) to [out=90,in=180] (north) to [out=0,in=180] (nd) to [out=0,in=90] (east) to [out=-90,in=0] (south) to [out=180,in=0] (sd) to [out=180,in=-90] (west);
\strand [thick] (cf) to (trd);
\strand [thick] (middw) to [out=90,in=-90] (miduw);
\strand [thick] (midde) to (midue);
\strand [thick] (tru1a) to (c1);
\flipcrossings{1,2}
\end{knot}
\draw [thick] (midue) to [out=90,in=-110] (trd1wa);
\draw [thick] (true)  to [out=70, in=-90] (middw);
\draw [thick] (miduw) to [out=90,in=-70] (trd1ea);
\draw [thick] (truw) to [out=110,in=-90] (midde);
\end{tikzpicture}
}
.
$$

Here the unresolved crossings denote the symmetry in $\cat{A}$. The first step comes from replacing the inclusion followed by the projection with the idempotent (cf. Section \ref{STidemnotation}). The second uses the fact that the symmetry in $\cat{A}$ allows us to do Reidemeister moves which involve only the unresolved crossings. We can now swap the strands with the braiding morphisms for $z$ and $z'$, undoing the symmetry crossings between the $z$ and $z'$ strands. We get:
$$
\hbox{
	\begin{tikzpicture}[baseline=(current  bounding  box.center)]
	
	\node (cf) at (0,-0.5){$z\otimes_s z'$};
	\node (tr) at (0,0.1) {$\bigtriangledown$};
	\coordinate (trd) at (0,0);
	\coordinate (truw) at (-0.1,00.2);
	\coordinate (true) at (0.1,0.2);

	\node (tr1a) at (0,2.9) {$\bigtriangleup$};
	\coordinate (trd1wa) at (-0.1,2.82);
	\coordinate (trd1ea) at (0.1,2.82);
	\coordinate (tru1a) at (0,3.05);
	\node (c1) at (0,3.5) {$z\otimes_s z'$};
	
	\coordinate (miduw) at (-0.25,2);
	\coordinate (midue) at (0.25,2);
	
	\coordinate (middw) at (-0.25,0.9);
	\coordinate (midde) at (0.25,0.9);
	
	\coordinate (west) at (-0.5,1.1);
	\coordinate (north) at (-0.1,1.5);
	\coordinate (east) at (0.5,1.8);
	\coordinate (south) at (0.1, 1.4);
	
	\coordinate (sd) at (0.1,0.3);
	\coordinate (nd) at (-0.1,2.6);
	
	\begin{knot}[clip width=4]
	\strand [blue, thick] (west) to [out=90,in=180] (north) to [out=0,in=180] (nd) to [out=0,in=90] (east) to [out=-90,in=0] (south) to [out=180,in=0] (sd) to [out=180,in=-90] (west);
	\strand [thick] (cf) to (trd);
	\strand [thick] (middw) to [out=90,in=-90] (miduw);
	\strand [thick] (midde) to (midue);
	\strand [thick] (tru1a) to (c1);
	\flipcrossings{1,2}
	\end{knot}
	\draw [thick] (midue) to [out=90,in=-110] (trd1wa);
	\draw [thick] (true)  to [out=70, in=-90] (middw);
	\draw [thick] (miduw) to [out=90,in=-70] (trd1ea);
	\draw [thick] (truw) to [out=110,in=-90] (midde);
	\end{tikzpicture}
}
=
\hbox{
	\begin{tikzpicture}[baseline=(current  bounding  box.center)]
	
	\node (cf) at (0,-0.5){$z\otimes_s z'$};
	\node (tr) at (0,0.1) {$\bigtriangledown$};
	\coordinate (trd) at (0,0);
	\coordinate (truw) at (-0.1,00.2);
	\coordinate (true) at (0.1,0.2);

	\node (tr1a) at (0,2.9) {$\bigtriangleup$};
	\coordinate (trd1wa) at (-0.1,2.82);
	\coordinate (trd1ea) at (0.1,2.82);
	\coordinate (tru1a) at (0,3.05);
	\node (c1) at (0,3.5) {$z \otimes_s z'$};
	
	\coordinate (miduw) at (-0.15,2.2);
	\coordinate (midue) at (0.15,2.3);
	
	\coordinate (midw) at (-0.15,1.61);
	\coordinate (mide) at (0.15,1);
	
	\coordinate (middw) at (-0.15,0.53);
	\coordinate (midde) at (0.15,0.6);
	
	\coordinate (west) at (-0.7,1.3);
	\coordinate (north) at (0.4,2);
	\coordinate (east) at (0.6,1.4);
	\coordinate (south) at (-0.4, 0.88);
	
	\coordinate (sd) at (0.2,0.3);
	\coordinate (nd) at (-0.2,2.6);
	
	\begin{knot}[clip width=3.5]
	\strand [blue, thick] (west) to [out=90,in=-90] (north) to [out=90,in=180] (nd) to [out=0,in=90] (east) to [out=-90,in=90] (south) to [out=-90,in=0] (sd) to [out=180,in=-90] (west);
	\strand [thick] (cf) to (trd);
	\strand [thick] (midw) to (middw);
	\strand [thick] (mide) to (midue);
	\strand [thick] (tru1a) to (c1);
	\flipcrossings{1,2}
	\end{knot}
	\draw [thick] (truw) to [out=110,in=-90] (middw);
	\draw [thick] (midue) to [out=90,in=-70] (trd1ea);
	\draw [thick] (midw) to (miduw) to [out=90,in=-110] (trd1wa);
	\draw [thick] (true) to [out=70,in=-90] (midde) to (mide);
	\end{tikzpicture}
}
=
\hbox{
	\begin{tikzpicture}[baseline=(current  bounding  box.center)]
	
	\node (cf) at (0,-0.5){$z \otimes_s z'$};
	\node (tr) at (0,0.1) {$\bigtriangledown$};
	\coordinate (trd) at (0,0);
	\coordinate (truw) at (-0.1,00.2);
	\coordinate (true) at (0.1,0.2);

	\node (tr1a) at (0,2.9) {$\bigtriangleup$};
	\coordinate (trd1wa) at (-0.1,2.82);
	\coordinate (trd1ea) at (0.1,2.82);
	\coordinate (tru1a) at (0,3.05);
	\node (c1) at (0,3.5) {$z \otimes_s z'$};

	\coordinate (west) at (-0.5,1.3);
	\coordinate (north) at (0,1.6);
	\coordinate (east) at (0.5,1.3);
	\coordinate (south) at (0, 1.0);
	
	\begin{knot}[clip width=3.5]
	\strand [blue, thick] (west) to [out=90,in=180] (north) to [out=0,in=90] (east) to [out=-90,in=0] (south) to [out=180,in=-90] (west);
	\strand [thick] (cf) to (trd);
	\strand [thick] (truw) to [out=100,in=-100] (trd1wa);
	\strand [thick] (true) to [out=80,in=-80] (trd1ea);
	\strand [thick] (tru1a) to (c1);
	\flipcrossings{2,3}
	\end{knot}
	\end{tikzpicture}
}
=
\hbox{
	\begin{tikzpicture}[baseline=(current  bounding  box.center)]
	
	\node (cf) at (0,-0.5){$z \otimes_s z'$};
	
	\node (c1) at (0,3.5) {$z \otimes_s z'$};

	\begin{knot}[clip width=3.5]
	\strand [thick] (cf) to (c1);
	\end{knot}
	\end{tikzpicture}
},
$$
where the first equality is the swap, the second is similar to the reverse to Reidemeister move done in the previous equation, and the final is a basic property of the idempotent. A similar argument shows the other composite is also the identity.

We still need to establish that the morphisms are indeed morphisms in $\dcentcat{A}$. That is, we need to show that they commute with the braiding as defined in Equation \eqref{SThalfbraid}. We compute, using slicing (Lemma \ref{STslicing}) and Equation \eqref{STcrossinginteraction} expressing how the crossings interact:
$$
\hbox{
\begin{tikzpicture}[baseline=(current  bounding  box.center)]

\node (i) at (-0.75,-0.5) {$a$};
\coordinate (lc) at (-0.75,1.5);
\coordinate (rc) at (0.75,2.5);
\coordinate (o) at (0.75,3);

\node (cf) at (0,-0.5){$z \otimes_s z'$};

\node (tr) at (0,0.1) {$\bigtriangledown$};
\coordinate (trd) at (0,0);
\coordinate (truw) at (-0.1,00.2);
\coordinate (true) at (0.1,0.2);

\node (tr1) at (0,1.2) {$\bigtriangleup$};
\coordinate (trd1w) at (-0.1,1.12);
\coordinate (trd1e) at (0.1,1.12);
\coordinate (tru1) at (0,1.35);
\node (c1) at (0,3) {$z' \otimes_s z$};

\begin{knot}[clip width=4]
\strand [thick] (cf) to (trd);
\strand [thick] (true) to [out=70, in=-110] (trd1w);
\strand [thick] (tru1) to (c1);
\strand [thick] (i) to [out=90,in=-90] (lc) to [out=90,in=-90] (rc) to [out=90,in=-90] (o);
\end{knot}
\draw [thick] (truw) to [out=110,in=-70] (trd1e);
\end{tikzpicture}
}
=
\hbox{
\begin{tikzpicture}[baseline=(current  bounding  box.center)]

\node (i) at (-0.75,-0.5) {$a$};
\coordinate (lc) at (-0.75,1.25);
\coordinate (rc) at (0.75,2);
\coordinate (o) at (0.75,3);

\node (cf) at (0,-0.5){$z \otimes_s z'$};

\node (tr) at (0,0.1) {$\bigtriangledown$};
\coordinate (trd) at (0,0);
\coordinate (truw) at (-0.1,00.2);
\coordinate (true) at (0.1,0.2);

\node (tr1) at (0,2.2) {$\bigtriangleup$};
\coordinate (trd1w) at (-0.1,2.12);
\coordinate (trd1e) at (0.1,2.12);
\coordinate (tru1) at (0,2.35);
\node (c1) at (0,3) {$z' \otimes_s z$};

\begin{knot}[clip width=4]
\strand [thick] (cf) to (trd);
\strand [thick] (true) to [out=70, in=-110] (trd1w);
\strand [thick] (tru1) to (c1);
\strand [thick] (i) to [out=90,in=-90] (lc) to [out=90,in=-90] (rc) to [out=90,in=-90] (o);
\end{knot}
\draw [thick] (truw) to [out=110,in=-70] (trd1e);
\end{tikzpicture}
}
=
\hbox{
\begin{tikzpicture}[baseline=(current  bounding  box.center)]

\node (i) at (-0.75,-0.5) {$a$};
\coordinate (lc) at (-0.75,0.25);
\coordinate (rc) at (0.75,1.1);
\coordinate (o) at (0.75,3);

\node (cf) at (0,-0.5){$z \otimes_s z'$};

\node (tr) at (0,0.1) {$\bigtriangledown$};
\coordinate (trd) at (0,0);
\coordinate (truw) at (-0.1,00.2);
\coordinate (true) at (0.1,0.2);

\node (tr1) at (0,2.2) {$\bigtriangleup$};
\coordinate (trd1w) at (-0.1,2.12);
\coordinate (trd1e) at (0.1,2.12);
\coordinate (tru1) at (0,2.35);
\node (c1) at (0,3) {$z' \otimes_s z$};

\begin{knot}[clip width=4]
\strand [thick] (cf) to (trd);
\strand [thick] (true) to [out=70, in=-110] (trd1w);
\strand [thick] (tru1) to (c1);
\strand [thick] (i) to [out=90,in=-90] (lc) to [out=90,in=-90] (rc) to [out=90,in=-90] (o);
\end{knot}
\draw [thick] (truw) to [out=110,in=-70] (trd1e);
\end{tikzpicture}
}
=
\hbox{
\begin{tikzpicture}[baseline=(current  bounding  box.center)]

\node (i) at (-0.75,-0.5) {$a$};
\coordinate (lc) at (-0.75,0);
\coordinate (rc) at (0.75,0.75);
\coordinate (o) at (0.75,3);

\node (cf) at (0,-0.5){$z \otimes_s z'$};

\node (tr) at (0,1.1) {$\bigtriangledown$};
\coordinate (trd) at (0,1);
\coordinate (truw) at (-0.1,1.2);
\coordinate (true) at (0.1,1.2);

\node (tr1) at (0,2.2) {$\bigtriangleup$};
\coordinate (trd1w) at (-0.1,2.12);
\coordinate (trd1e) at (0.1,2.12);
\coordinate (tru1) at (0,2.35);
\node (c1) at (0,3) {$z' \otimes_s z$};

\begin{knot}[clip width=4]
\strand [thick] (cf) to (trd);
\strand [thick] (true) to [out=70, in=-110] (trd1w);
\strand [thick] (tru1) to (c1);
\strand [thick] (i) to [out=90,in=-90] (lc) to [out=90,in=-90] (rc) to [out=90,in=-90] (o);
\end{knot}
\draw [thick] (truw) to [out=110,in=-70] (trd1e);
\end{tikzpicture}
},
$$
where the first step is slicing, the second step uses the Reidemeister moves for the braiding, and the final step is slicing again. This is what we wanted to show.
\end{proof}

\subsubsection{Associativity}
Before we discuss the associators, it is helpful to examine what at a triple product $(z \otimes_s z')\otimes_s z''$ looks like for $z,z',z''\in \dcentcat{A}$.

\begin{lem}\label{STtripleproduct}
	The triple products $(z \otimes_s z')\otimes_s z''$ and $z\otimes_s (z'\otimes_s z'')$ have as underlying object the subobject associated to the idempotent
	\begin{equation}\label{STtripleidem}
	\hbox{
		\begin{tikzpicture}[baseline=(current  bounding  box.center)]
		\coordinate (west) at (-0.8,0);
		\coordinate (north) at (-0.15,0.2);
		\coordinate (east) at (0.3,0);
		\coordinate (south) at (-0.15,-0.2);
		
		\coordinate (west1) at (-0.3,1);
		\coordinate (north1) at (0.15,1.2);
		\coordinate (east1) at (0.8,1);
		\coordinate (south1) at (0.15,0.8);

		\node (a) at (-0.5,-1.2) {$z$};
		\node (b) at (0,-1.2) {$z'$};
		\node (c) at (0.5,-1.2){$z''$};
		\coordinate (ao) at (-0.5,2);
		\coordinate (bo) at (0,2);
		\coordinate (co) at (0.5,2);
		
		\begin{knot}[clip width=4,clip radius = 3pt]
		\strand [blue, thick] (west)
		to [out=90,in=-180] (north)
		to [out=0,in=90] (east)
		to [out=-90,in=0] (south)
		to [out=-180,in=-90] (west);
		\strand [blue, thick] (west1)
		to [out=90,in=-180] (north1)
		to [out=0,in=90] (east1)
		to [out=-90,in=0] (south1)
		to [out=-180,in=-90] (west1);
		\strand [thick] (a) to (ao);
		\strand [thick] (b) to (bo);
		\strand [thick] (c) to (co);
		\flipcrossings{2,3,6,7}
		\end{knot}
		\end{tikzpicture}
	},
	\end{equation}
	interpreted as endomorphism of $(zz')z''$ and $z(z'z'')$, respectively, using the (suppressed) associators.
\end{lem}

\begin{proof}
	By definition, the underlying object of $(z \otimes_s z')\otimes_s z''$ is the subobject associated to the idempotent
	$$
	\hbox{
		\begin{tikzpicture}[baseline=(current  bounding  box.center)]
		\coordinate (west) at (-1,0);
		\coordinate (north) at (0,0.5);
		\coordinate (east) at (1,0);
		\coordinate (south) at (0,-0.4);
		\node (a) at (-0.5,-1.2) {$z \otimes_s z'$};
		\node (b) at (0.5,-1.2) {$z''$};
		\coordinate (ao) at (-0.5,1);
		\coordinate (bo) at (0.5,1);
		
		\begin{knot}[clip width=4]
		\strand [blue, thick] (west)
		to [out=90,in=-180] (north)
		to [out=0,in=90] (east)
		to [out=-90,in=0] (south)
		to [out=-180,in=-90] (west);
		\strand [thick] (a) to (ao);
		\strand [thick] (b) to (bo);
		\flipcrossings{1,4}
		\end{knot}
		\end{tikzpicture}
	},
	$$
	where the overcrossing on the strand $z \otimes_s z'$ corresponds to Equation \eqref{SThalfbraid}. Spelling this out, we get:
	\begin{equation}\label{STtripleidemp}
	\hbox{
		\begin{tikzpicture}[baseline=(current  bounding  box.center)]
		
		\node (ab) at (0,-0.5){$z \otimes_s z'$};

		\node (tr) at (0,0.1) {$\bigtriangledown$};
		\coordinate (trd) at (0,0);
		\coordinate (truw) at (-0.1,00.2);
		\coordinate (true) at (0.1,0.2);
		
		\coordinate (west) at (-0.5,1.2);
		\coordinate (north) at (0.6,1.5);
		\coordinate (east) at (1.3,1.2);
		\coordinate (south) at (0.6,0.9);
		
		\node (tr1) at (0,2.2) {$\bigtriangleup$};
		\coordinate (trd1w) at (-0.1,2.12);
		\coordinate (trd1e) at (0.1,2.12);
		\coordinate (tru1) at (0,2.35);
		\coordinate (c1) at (0,3);
		
		\node (c) at (1,-0.5){$z''$};
		\coordinate (co) at (1,3); 
		
		\begin{knot}[clip width=4]
		\strand [blue, thick] (west)
		to [out=90,in=-180] (north)
		to [out=0,in=90] (east)
		to [out=-90,in=0] (south)
		to [out=-180,in=-90] (west);
		\strand [thick] (ab) to (trd);
		\strand [thick] (true) to [out=70, in=-70] (trd1e);
		\strand [thick] (tru1) to (c1);
		\strand [thick] (c)  to [out=90, in =-90] (co);
		\strand [thick] (truw) to [out=110,in=-110] (trd1w);
		\flipcrossings{4,1}
		\end{knot}
		\end{tikzpicture}
	}=
		\hbox{
		\begin{tikzpicture}[baseline=(current  bounding  box.center)]
		
		\node (ab) at (0,-0.5){$z \otimes_s z'$};

		\node (tr) at (0,0.1) {$\bigtriangledown$};
		\coordinate (trd) at (0,0);
		\coordinate (truw) at (-0.1,00.2);
		\coordinate (true) at (0.1,0.2);
		
		\coordinate (west) at (0,1.2);
		\coordinate (north) at (0.6,1.5);
		\coordinate (east) at (1.3,1.2);
		\coordinate (south) at (0.6,0.9);
		
		\node (tr1) at (0,2.2) {$\bigtriangleup$};
		\coordinate (trd1w) at (-0.1,2.12);
		\coordinate (trd1e) at (0.1,2.12);
		\coordinate (tru1) at (0,2.35);
		\coordinate (c1) at (0,3);
		
		\node (c) at (1,-0.5){$z''$};
		\coordinate (co) at (1,3); 
		
		\begin{knot}[clip width=4]
			\strand [blue, thick] (west)
			to [out=90,in=-180] (north)
			to [out=0,in=90] (east)
			to [out=-90,in=0] (south)
			to [out=-180,in=-90] (west);
			\strand [thick] (ab) to (trd);
			\strand [thick] (true) to [out=70, in=-70] (trd1e);
			\strand [thick] (tru1) to (c1);
			\strand [thick] (c)  to [out=90, in =-90] (co);
			\strand [thick] (truw) to [out=110,in=-110] (trd1w);
			\flipcrossings{4,1}
		\end{knot}
	\end{tikzpicture}
	}
	\end{equation}
	for the idempotent that picks out the underlying object of $(z \otimes_s z')\otimes_s z''$. We now claim that
	$$
	\hbox{
		\begin{tikzpicture}[baseline=(current  bounding  box.center)]

		\node (inc) at (0.5,1){$(z \otimes_s z')\otimes_s z''$};
		
		\node (tr) at (0,2.1) {$\bigtriangledown$};
		\coordinate (trd) at (0,2);
		\coordinate (truw) at (-0.1,2.2);
		\coordinate (true) at (0.1,2.2);

		\node (tr3) at (0.5,1.5) {$\bigtriangledown$};
		\coordinate (trd3) at (0.5,1.4);
		\coordinate (truw3) at (0.4,1.6);
		\coordinate (true3) at (0.6,1.6);

		\node (ao) at (-0.3,3){$z$};
		\node (bo) at (0.3,3){$z'$};
		\node (co) at (1,3){$z''$}; 
		
		\begin{knot}[clip width=4]
		\strand [thick] (truw3) to [out=110,in=-90] (trd);
		\strand [thick] (true) to [out=70, in=-90] (bo);
		\strand [thick] (true3) to [out=70,in=-90] (co);
		\strand [thick] (inc)  to [out=90, in =-90] (trd3);
		\strand [thick] (truw) to [out=110,in=-90] (ao);
		\end{knot}
		\end{tikzpicture}
	}
	\tn{ and }
	\hbox{
		\begin{tikzpicture}[baseline=(current  bounding  box.center)]
		
		\node (a) at (-0.25,0.5)[minimum height=20pt]{$z$};
		\node (b) at (0.25,0.5)[minimum height=20pt]{$z'$};
		\node (c) at (0.6,0.5)[minimum height=20pt]{$z''$};

		\node (tr13) at (0,1.2) {$\bigtriangleup$};
		\coordinate (trd1w3) at (-0.1,1.12);
		\coordinate (trd1e3) at (0.1,1.12);
		\coordinate (tru13) at (0,1.35);
		
		\node (tr1) at (0.3,1.7) {$\bigtriangleup$};
		\coordinate (trd1w) at (0.2,1.62);
		\coordinate (trd1e) at (0.4,1.62);
		\coordinate (tru1) at (0.3,1.85);
		
		\node (outg) at (0.5,2.5){$(z \otimes_s z')\otimes_s z''$};

		\begin{knot}[clip width=4]
		\strand [thick] (a) to [out=90,in=-110] (trd1w3);
		\strand [thick] (b) to [out=90, in=-70] (trd1e3);
		\strand [thick] (tru13) to [out=90,in=-110] (trd1w);
		\strand [thick] (c)  to [out=90, in =-70] (trd1e);
		\strand [thick] (tru1) to [out=110,in=-110] (outg);
		\end{knot}
		\end{tikzpicture}
	},
	$$
	together exhibit $(z \otimes_s z')\otimes_s z''$ as the subobject associated to the idempotent from Equation \eqref{STtripleidem}, as desired. That is, we want to show these morphisms satisfy the properties from Equation \eqref{STinclprojprops}. From the properties of the inclusions and projections involved, we see that the composition along $zz'z''$ indeed is the identity. Composing along $(z \otimes_s z')\otimes_s z''$, we get:
	\begin{equation}\label{STassocomp}
	\hbox{
		\begin{tikzpicture}[baseline=(current  bounding  box.center)]
		
		\node (a) at (-0.25,0.5)[minimum height=15pt]{$z$};
		\node (b) at (0.25,0.5)[minimum height=15pt]{$z'$};
		\node (c) at (1,0.5)[minimum height=15pt]{$z''$};

		\node (tr14) at (0,1.2) {$\bigtriangleup$};
		\coordinate (trd1w4) at (-0.1,1.12);
		\coordinate (trd1e4) at (0.1,1.12);
		\coordinate (tru14) at (0,1.35);
		
		\node (tr15) at (0.3,1.7) {$\bigtriangleup$};
		\coordinate (trd1w5) at (0.2,1.62);
		\coordinate (trd1e5) at (0.4,1.62);
		\coordinate (tru15) at (0.3,1.85);
		
		\node (tr3) at (0.5,2.5) {$\bigtriangledown$};
		\coordinate (trd3) at (0.5,2.4);
		\coordinate (truw3) at (0.4,2.6);
		\coordinate (true3) at (0.6,2.6);
		
		\node (tr) at (0,3.1) {$\bigtriangledown$};
		\coordinate (trd) at (0,3);
		\coordinate (truw) at (-0.1,3.2);
		\coordinate (true) at (0.1,3.2);
		
		\node (ao) at (-0.3,4){$z$};
		\node (bo) at (0.3,4){$z'$};
		\node (co) at (1,4){$z''$};

		\begin{knot}[clip width=4]
		\strand [thick] (a) to [out=90,in=-110] (trd1w4);
		\strand [thick] (b) to [out=90, in=-70] (trd1e4);
		\strand [thick] (tru14) to [out=90,in=-110] (trd1w5);
		\strand [thick] (c)  to [out=90, in =-70] (trd1e5);
		\strand [thick] (tru15) to [out=90,in=-90] (trd3);
		\strand [thick] (truw3) to [out=110,in=-90] (trd);
		\strand [thick] (true) to [out=70, in=-90] (bo);
		\strand [thick] (true3) to [out=70,in=-90] (co);
		\strand [thick] (truw) to [out=110,in=-90] (ao);
		\end{knot}
		\end{tikzpicture}
	}
=
\hbox{
	\begin{tikzpicture}[baseline=(current  bounding  box.center)]
	
	\node (a) at (-0.25,0.5){$z$};
	\node (b) at (0.25,0.5){$z'$};
	\node (c) at (1,0.5){$z''$};

	\node (tr14) at (0,1.2) {$\bigtriangleup$};
	\coordinate (trd1w4) at (-0.1,1.12);
	\coordinate (trd1e4) at (0.1,1.12);
	\coordinate (tru14) at (0,1.35);
	
	\node (tr3) at (0,1.6) {$\bigtriangledown$};
	\coordinate (trd3) at (0,1.5);
	\coordinate (truw3) at (-0.1,1.7);
	\coordinate (true3) at (0.1,1.7);
	
	\coordinate (west) at (-0.1,2.7);
	\coordinate (north) at (0.6,3);
	\coordinate (east) at (1.3,2.7);
	\coordinate (south) at (0.6,2.4);
	
	\node (tr15) at (0,3.7) {$\bigtriangleup$};
	\coordinate (trd1w5) at (-0.1,3.62);
	\coordinate (trd1e5) at (0.1,3.62);
	\coordinate (tru15) at (0,3.85);
	
	\node (tr) at (0,4) {$\bigtriangledown$};
	\coordinate (trd) at (0,3.9);
	\coordinate (truw) at (-0.1,4.1);
	\coordinate (true) at (0.1,4.1);
	
	\node (ao) at (-0.3,4.6){$z$};
	\node (bo) at (0.3,4.6){$z'$};
	\node (co) at (1,4.6){$z''$};

	\begin{knot}[clip width=4]
	\strand [blue, thick] (west) to [out=90,in=-180] (north) to [out=0,in=90] (east) to [out=-90,in=0] (south) to [out=-180,in=-90] (west);
	\strand [thick] (a) to [out=90,in=-110] (trd1w4);
	\strand [thick] (b) to [out=90, in=-70] (trd1e4);
	\strand [thick] (tru14) to [out=90,in=-90] (trd3);
	\strand [thick] (truw3) to [out=110,in=-110] (trd1w5);
	\strand [thick] (true3) to [out=70,in=-70] (trd1e5);
	\strand [thick] (tru15) to [out=90,in=-90] (trd);
	\strand [thick] (c)  to [out=90, in =-90] (co);
	\strand [thick] (true) to [out=70, in=-90] (bo);
	\strand [thick] (truw) to [out=110,in=-90] (ao);
	\flipcrossings{1,4}
	\end{knot}
	\end{tikzpicture}
}
=
\hbox{
	\begin{tikzpicture}[baseline=(current  bounding  box.center)]
	\coordinate (west) at (-0.8,0);
	\coordinate (north) at (-0.15,0.2);
	\coordinate (east) at (0.3,0);
	\coordinate (south) at (-0.15,-0.2);
	
	\coordinate (west1) at (-0.3,1);
	\coordinate (north1) at (0.15,1.2);
	\coordinate (east1) at (0.8,1);
	\coordinate (south1) at (0.15,0.8);
	
	\coordinate (west2) at (-0.8,2);
	\coordinate (north2) at (-0.15,2.2);
	\coordinate (east2) at (0.3,2);
	\coordinate (south2) at (-0.15,1.8);
	
	\node (a) at (-0.5,-1) {$z$};
	\node (b) at (0,-1) {$z'$};
	\node (c) at (0.5,-1){$z''$};
	\coordinate (ao) at (-0.5,2.9);
	\coordinate (bo) at (0,2.9);
	\coordinate (co) at (0.5,2.9);
	
	\begin{knot}[clip width=4, clip radius = 3pt]
	\strand [blue, thick] (west)
	to [out=90,in=-180] (north)
	to [out=0,in=90] (east)
	to [out=-90,in=0] (south)
	to [out=-180,in=-90] (west);
	\strand [blue, thick] (west1)
	to [out=90,in=-180] (north1)
	to [out=0,in=90] (east1)
	to [out=-90,in=0] (south1)
	to [out=-180,in=-90] (west1);
		\strand [blue, thick] (west2)
	to [out=90,in=-180] (north2)
	to [out=0,in=90] (east2)
	to [out=-90,in=0] (south2)
	to [out=-180,in=-90] (west2);
	\strand [thick] (a) to (ao);
	\strand [thick] (b) to (bo);
	\strand [thick] (c) to (co);
	\flipcrossings{2,3,6,7,11,10}
	\end{knot}
	\end{tikzpicture}
}
	=
	\hbox{
		\begin{tikzpicture}[baseline=(current  bounding  box.center)]
		\coordinate (west) at (-0.8,0);
		\coordinate (north) at (-0.15,0.2);
		\coordinate (east) at (0.3,0);
		\coordinate (south) at (-0.15,-0.2);
		
		\coordinate (west1) at (-0.3,1);
		\coordinate (north1) at (0.15,1.2);
		\coordinate (east1) at (0.8,1);
		\coordinate (south1) at (0.15,0.8);

		\node (a) at (-0.5,-1.2) {$z$};
		\node (b) at (0,-1.2) {$z'$};
		\node (c) at (0.5,-1.2){$z''$};
		\coordinate (ao) at (-0.5,2);
		\coordinate (bo) at (0,2);
		\coordinate (co) at (0.5,2);
		
		\begin{knot}[clip width=4, clip radius = 3pt]
		\strand [blue, thick] (west)
		to [out=90,in=-180] (north)
		to [out=0,in=90] (east)
		to [out=-90,in=0] (south)
		to [out=-180,in=-90] (west);
		\strand [blue, thick] (west1)
		to [out=90,in=-180] (north1)
		to [out=0,in=90] (east1)
		to [out=-90,in=0] (south1)
		to [out=-180,in=-90] (west1);
		\strand [thick] (a) to (ao);
		\strand [thick] (b) to (bo);
		\strand [thick] (c) to (co);
		\flipcrossings{2,3,6,7}
		\end{knot}
		\end{tikzpicture}
	},
	\end{equation}
	where the first two steps come from combining inclusion and projections to idempotents, using Equation \eqref{STtripleidemp} in the first. In the last step we used that the rings are transparent to each other, so we can slide the top one down to meet the bottom one, and idempotent, so the two rings straddling the same strands combine to one. The argument showing  $z\otimes_s (z'\otimes_s z'')$ corresponds to the idempotent from Equation \eqref{STtripleidem} is analogous.
\end{proof}

\begin{lem}\label{STasso}
	The associators of $\cat{A}$ induce isomorphisms between $(z \otimes_s z')\otimes_s z''$ and $z \otimes_s (z' \otimes_s z'')$ for all $z,z',z''\in\dcentcat{A}$.
\end{lem}

\begin{proof}
	From Lemma \ref{STtripleproduct}, we know that that the triple products have underlying objects that are the subobjects associated to idempotents that are conjugate to each other along the associators $\alpha:(zz')z''\rar z(z'z'')$. This means we are in the situation of Lemma \ref{STconjugateidems} and the associators will induce isomorphisms between these subobjects. We still have show that these isomorphisms are compatible with the half-braidings, i.e. that the induced morphisms are indeed in $\dcentcat{A}$. To do this, we check that, explicitly inserting the associator $\alpha$ for this proof:
	$$
	\hbox{
		\begin{tikzpicture}[baseline=(current  bounding  box.center)]
		
		\node (tr3) at (0.5,0.7) {$\bigtriangledown$};
		\coordinate (trd3) at (0.5,0.6);
		\coordinate (truw3) at (0.4,0.8);
		\coordinate (true3) at (0.6,0.8);
		
		\node (tr) at (0,1.3) {$\bigtriangledown$};
		\coordinate (trd) at (0,1.2);
		\coordinate (truw) at (-0.1,1.4);
		\coordinate (true) at (0.1,1.4);
		
		\node (tr14) at (0.3,2.5) {$\bigtriangleup$};
		\coordinate (trd1w4) at (0.2,2.42);
		\coordinate (trd1e4) at (0.4,2.42);
		\coordinate (tru14) at (0.3,2.65);
		
		\node (tr15) at (0,3) {$\bigtriangleup$};
		\coordinate (trd1w5) at (-0.1,2.92);
		\coordinate (trd1e5) at (0.1,2.92);
		\coordinate (tru15) at (0,3.15);
		
		\node (alpha) at (0.15,2) [draw, minimum width=1cm]{$\alpha$};
		
		\coordinate (outg) at (0,4);
		\coordinate (inc) at (0.5,-0.5);
		
		\coordinate (w) at (0,-0.5);
		\coordinate (rc) at (1,0.6);
		\coordinate (wo) at (0.5,4);
		
		\begin{knot}[clip width=4]
		\strand [thick] (w) to [out=90,in=-90] (rc) to [out=90,in=-90] (wo);
		\strand [thick] (truw) to [out=110,in=-90] (alpha.-150);
		\strand [thick] (alpha.150) to [out=90,in=-110] (trd1w5);
		\strand [thick] (true) to [out=70,in=-90] (alpha);
		\strand [thick] (alpha) to [out=90, in=-110] (trd1w4);
		\strand [thick] (tru14) to [out=90,in=-70] (trd1e5);
		\strand [thick] (true3) to [out=70, in=-90] (alpha.-30);
		\strand [thick] (alpha.30) to [out=90,in =-70] (trd1e4);
		\strand [thick] (tru15) to [out=90,in=-90] (outg);
		\strand [thick] (truw3) to [out=110,in=-90] (trd);
		\strand [thick] (inc) to [out=90,in=-90] (trd3);
		\flipcrossings{1}
		\end{knot}
		\end{tikzpicture}
	}
	=
	\hbox{
	\begin{tikzpicture}[baseline=(current  bounding  box.center)]
	
	\node (tr3) at (0.5,0) {$\bigtriangledown$};
	\coordinate (trd3) at (0.5,-0.1);
	\coordinate (truw3) at (0.4,0.1);
	\coordinate (true3) at (0.6,0.1);
	
	\node (tr) at (0,1.3) {$\bigtriangledown$};
	\coordinate (trd) at (0,1.2);
	\coordinate (truw) at (-0.1,1.4);
	\coordinate (true) at (0.1,1.4);
	
	\node (tr14) at (0.3,2.5) {$\bigtriangleup$};
	\coordinate (trd1w4) at (0.2,2.42);
	\coordinate (trd1e4) at (0.4,2.42);
	\coordinate (tru14) at (0.3,2.65);
	
	\node (tr15) at (0,3) {$\bigtriangleup$};
	\coordinate (trd1w5) at (-0.1,2.92);
	\coordinate (trd1e5) at (0.1,2.92);
	\coordinate (tru15) at (0,3.15);
	
	\node (alpha) at (0.15,2) [draw, minimum width=1cm]{$\alpha$};
	
	\coordinate (outg) at (0,4);
	\coordinate (inc) at (0.5,-0.5);
	
	\coordinate (w) at (0,-0.5);
	\coordinate (lc) at (0,0.2);
	\coordinate (rc) at (0.8,1.8);
	\coordinate (wo) at (0.5,4);
	
	\begin{knot}[clip width=4]
	\strand [thick] (w) to [out=90,in=-90] (lc) to [out=90,in=-90] (rc) to [out=90,in=-90] (wo);
	\strand [thick] (truw) to [out=110,in=-90] (alpha.-150);
	\strand [thick] (alpha.150) to [out=90,in=-110] (trd1w5);
	\strand [thick] (true) to [out=70,in=-90] (alpha);
	\strand [thick] (alpha) to [out=90, in=-110] (trd1w4);
	\strand [thick] (tru14) to [out=90,in=-70] (trd1e5);
	\strand [thick] (true3) to [out=70, in=-90] (alpha.-30);
	\strand [thick] (alpha.30) to [out=90,in =-70] (trd1e4);
	\strand [thick] (tru15) to [out=90,in=-90] (outg);
	\strand [thick] (truw3) to [out=110,in=-90] (trd);
	\strand [thick] (inc) to [out=90,in=-90] (trd3);
	\flipcrossings{2}
	\end{knot}
	\end{tikzpicture}
}
	=
	\hbox{
	\begin{tikzpicture}[baseline=(current  bounding  box.center)]
	
	\node (tr3) at (0.5,0) {$\bigtriangledown$};
	\coordinate (trd3) at (0.5,-0.1);
	\coordinate (truw3) at (0.4,0.1);
	\coordinate (true3) at (0.6,0.1);
	
	\node (tr) at (0,0.5) {$\bigtriangledown$};
	\coordinate (trd) at (0,0.4);
	\coordinate (truw) at (-0.1,0.6);
	\coordinate (true) at (0.1,0.6);
	
	\node (tr14) at (0.3,2.5) {$\bigtriangleup$};
	\coordinate (trd1w4) at (0.2,2.42);
	\coordinate (trd1e4) at (0.4,2.42);
	\coordinate (tru14) at (0.3,2.65);
	
	\node (tr15) at (0,3) {$\bigtriangleup$};
	\coordinate (trd1w5) at (-0.1,2.92);
	\coordinate (trd1e5) at (0.1,2.92);
	\coordinate (tru15) at (0,3.15);
	
	\node (alpha) at (0.15,2) [draw, minimum width=1cm]{$\alpha$};
	
	\coordinate (outg) at (0,4);
	\coordinate (inc) at (0.5,-0.5);
	
	\coordinate (w) at (0,-0.5);
	\coordinate (lc) at (-0.5,0.6);
	\coordinate (rc) at (0.8,1.7);
	\coordinate (wo) at (0.5,4);
	
	\begin{knot}[clip width=4,clip radius=3pt]
	\strand [thick] (w) to [out=90,in=-90] (lc) to [out=90,in=-90] (rc) to [out=90,in=-90] (wo);
	\strand [thick] (truw) to [out=110,in=-90] (alpha.-150);
	\strand [thick] (alpha.150) to [out=90,in=-110] (trd1w5);
	\strand [thick] (true) to [out=70,in=-90] (alpha);
	\strand [thick] (alpha) to [out=90, in=-110] (trd1w4);
	\strand [thick] (tru14) to [out=90,in=-70] (trd1e5);
	\strand [thick] (true3) to [out=70, in=-90] (alpha.-30);
	\strand [thick] (alpha.30) to [out=90,in =-70] (trd1e4);
	\strand [thick] (tru15) to [out=90,in=-90] (outg);
	\strand [thick] (truw3) to [out=110,in=-90] (trd);
	\strand [thick] (inc) to [out=90,in=-90] (trd3);
	\flipcrossings{2}
	\end{knot}
	\end{tikzpicture}
}
=
	\hbox{
	\begin{tikzpicture}[baseline=(current  bounding  box.center)]
	
	\node (tr3) at (0.5,0) {$\bigtriangledown$};
	\coordinate (trd3) at (0.5,-0.1);
	\coordinate (truw3) at (0.4,0.1);
	\coordinate (true3) at (0.6,0.1);
	
	\node (tr) at (0,0.5) {$\bigtriangledown$};
	\coordinate (trd) at (0,0.4);
	\coordinate (truw) at (-0.1,0.6);
	\coordinate (true) at (0.1,0.6);
	
	\node (tr14) at (0.3,2.5) {$\bigtriangleup$};
	\coordinate (trd1w4) at (0.2,2.42);
	\coordinate (trd1e4) at (0.4,2.42);
	\coordinate (tru14) at (0.3,2.65);
	
	\node (tr15) at (0,3) {$\bigtriangleup$};
	\coordinate (trd1w5) at (-0.1,2.92);
	\coordinate (trd1e5) at (0.1,2.92);
	\coordinate (tru15) at (0,3.15);
	
	\node (alpha) at (0.15,1.1) [draw, minimum width=1cm]{$\alpha$};
	
	\coordinate (outg) at (0,4);
	\coordinate (inc) at (0.5,-0.5);
	
	\coordinate (w) at (0,-0.5);
	\coordinate (lc) at (-0.5,1.5);
	\coordinate (rc) at (0.8,2.3);
	\coordinate (wo) at (0.5,4);
	
	\begin{knot}[clip width=4,clip radius=3pt]
	\strand [thick] (w) to [out=90,in=-90] (lc) to [out=90,in=-90] (rc) to [out=90,in=-90] (wo);
	\strand [thick] (truw) to [out=110,in=-90] (alpha.-150);
	\strand [thick] (alpha.150) to [out=90,in=-110] (trd1w5);
	\strand [thick] (true) to [out=70,in=-90] (alpha);
	\strand [thick] (alpha) to [out=90, in=-110] (trd1w4);
	\strand [thick] (tru14) to [out=90,in=-70] (trd1e5);
	\strand [thick] (true3) to [out=70, in=-90] (alpha.-30);
	\strand [thick] (alpha.30) to [out=90,in =-70] (trd1e4);
	\strand [thick] (tru15) to [out=90,in=-90] (outg);
	\strand [thick] (truw3) to [out=110,in=-90] (trd);
	\strand [thick] (inc) to [out=90,in=-90] (trd3);
	\flipcrossings{2}
	\end{knot}
	\end{tikzpicture}
}
	=
	\hbox{
	\begin{tikzpicture}[baseline=(current  bounding  box.center)]
	
	\node (tr3) at (0.5,0) {$\bigtriangledown$};
	\coordinate (trd3) at (0.5,-0.1);
	\coordinate (truw3) at (0.4,0.1);
	\coordinate (true3) at (0.6,0.1);
	
	\node (tr) at (0,0.5) {$\bigtriangledown$};
	\coordinate (trd) at (0,0.4);
	\coordinate (truw) at (-0.1,0.6);
	\coordinate (true) at (0.1,0.6);
	
	\node (tr14) at (0.3,2) {$\bigtriangleup$};
	\coordinate (trd1w4) at (0.2,1.92);
	\coordinate (trd1e4) at (0.4,1.92);
	\coordinate (tru14) at (0.3,2.15);
	
	\node (tr15) at (0,2.4) {$\bigtriangleup$};
	\coordinate (trd1w5) at (-0.1,2.32);
	\coordinate (trd1e5) at (0.1,2.32);
	\coordinate (tru15) at (0,2.55);
	
	\node (alpha) at (0.15,1.5) [draw, minimum width=1cm]{$\alpha$};
	
	\coordinate (outg) at (0,4);
	\coordinate (inc) at (0.5,-0.5);
	
	\coordinate (w) at (0,-0.5);
	\coordinate (lc) at (-0.5,2.2);
	\coordinate (wo) at (0.5,4);
	
	\begin{knot}[clip width=4,clip radius=3pt]
	\strand [thick] (w) to [out=90,in=-90] (lc) to [out=90,in=-90] (wo);
	\strand [thick] (truw) to [out=110,in=-90] (alpha.-150);
	\strand [thick] (alpha.150) to [out=90,in=-110] (trd1w5);
	\strand [thick] (true) to [out=70,in=-90] (alpha);
	\strand [thick] (alpha) to [out=90, in=-110] (trd1w4);
	\strand [thick] (tru14) to [out=90,in=-70] (trd1e5);
	\strand [thick] (true3) to [out=70, in=-90] (alpha.-30);
	\strand [thick] (alpha.30) to [out=90,in =-70] (trd1e4);
	\strand [thick] (tru15) to [out=90,in=-90] (outg);
	\strand [thick] (truw3) to [out=110,in=-90] (trd);
	\strand [thick] (inc) to [out=90,in=-90] (trd3);
	\flipcrossings{1}
	\end{knot}
	\end{tikzpicture}
},
	$$
	where we made repeated use of slicing (Lemma \ref{STslicing}) to bring the crossing strand up. To pass the braiding past the associator, we have used the naturality of the braiding. 
\end{proof}

\subsubsection{Unit}
In order for $\otimes_s$ to define a monoidal structure on $\dcentcat{A}$, we need to provide a monoidal unit. This unit needs to come with unitors, natural isomorphisms witnessing that tensoring with this object is the identity. For more details on monoidal categories, see for example \cite{Joyal1986}.

\begin{df}\label{STunitdef}
The \emph{symmetric unit} $\mathbb{I}_s\dcentcat{A}$ is the object $\sum_{i\in \cat{O}(\cat{A})} ii^*$, equipped with the half braiding:
\begin{equation}\label{STunithalfbraid}
\hbox{
	\begin{tikzpicture}[baseline=(current  bounding  box.center)]
	\node (i) at (0,0) {$\mathbb{I}_s$};
	\coordinate (ip) at (-0.1,0.27);
	\coordinate (o) at (0,3);
	\coordinate (op) at (-0.1,3);
	
	\node (a) at (-0.5,0){$a$};
	\coordinate (ao) at (0.5,3);

	\begin{knot}[clip width=4]
	\strand [thick,blue] (i) to (o);
	\strand [thick,blue] (ip) to (op);
	\strand [thick] (a) to [out=90,in=-90] (ao);
	\end{knot}
	
	\end{tikzpicture}
}
:=
\sum_{i,j\in\cat{O}(\cat{A})}\sum_{\phi\in B(ai,j)}
\hbox{
\begin{tikzpicture}[baseline=(current  bounding  box.center)]
\node (i) at (0,0) {$i$};
\node (o) at (0,3){$j$};

\node (id) at (0.5,0){$i^*$};
\node (do) at (0.5,3){$j^*$};

\node (a) at (-0.5,0){$a$};
\node (phi) at (0,1) [draw]{$\phi$};

\node (phid) at (0.5,2)[draw]{$\phi^*$};
\node (ao) at (1,3){$a$};

\begin{knot}[clip width=4]
\strand [thick] (i) to (phi);
\strand [thick] (phi) to (o);
\strand [thick] (a) to [out=90,in=-110] (phi);
\strand [thick] (id) to (phid);
\strand [thick] (phid) to (do);
\strand [thick] (phid) to [out=-60,in=-90] (ao);
\end{knot}

\end{tikzpicture}
}.
\end{equation}
The double strand will henceforth be used to denote the identity on $\mathbb{I}_s$. In the above formula $\phi^*$ denotes 
$$
\hbox{
\begin{tikzpicture}[baseline=(current  bounding  box.center)]

\node (a) at (-0.25,-3) {$i^*$};
\node (i) at (0.25,-3) {$a^*$};

\node (phi) at (0,-1) [draw,thick]{$\phi^*$};

\node (k) at (0,1) {$j^*$};

\begin{knot}[clip width=4]
\strand [thick] (a) to [out=90,in=-110] (phi);
\strand [thick] (phi) to [out=90,in=-90] (k);
\strand [thick] (i) to  [out=90, in =-70] (phi);
\end{knot}
\end{tikzpicture}
}
:=
\hbox{
	\begin{tikzpicture}[baseline=(current  bounding  box.center)]

	\node (i) at (-0.25,-3) {$i^*$};
	\node (a) at (0.25,-3) {$a^*$};
	
	\coordinate (ac) at (0.9,-0.6);
	\coordinate (ic) at (0.6,-0.8);
	
	\node (phi) at (0,-1) [draw,thick]{$\phi^t$};
	
	\node (k) at (0,1) {$j^*$};
	\coordinate (kc) at (-0.6,-1.3);
	
	\begin{knot}[clip width=4]
	\strand [thick] (a) to [out=90,in=-90] (ac) to [out=90,in=110] (phi);
	\strand [thick] (phi) to [out=-90,in=-90] (kc) to [out=90,in=-90] (k);
	\strand [thick] (i) to [in=-90,out=90] (ic) to [out=90, in =70] (phi);
	\end{knot}
	\end{tikzpicture}
},
$$
and $\phi^t$ was introduced in Definition \ref{STtransposedef}.
\end{df}

We will show that this object acts as the monoidal unit for the symmetric tensor product. The left unitor is built from evaluation morphisms
\begin{equation}\label{STleftunitor}
\hbox{
	\begin{tikzpicture}[baseline=(current  bounding  box.center)]
	\node (inc) at (0,0.5){$\mathbb{I}_s\otimes_s z$};
	\node (bo) at (0,3){$z$};
	\node (tr) at (0,1.1) {$\bigtriangledown$};
	\coordinate (trd) at (0,1);
	\coordinate (truw) at (-0.15,1.2);
	\coordinate (true) at (0.1,1.2);
	\coordinate (trum) at (-0.1,1.2);
	
	\coordinate (ctrl) at (0.175,2.3);
	
	\begin{knot}[clip width=4,clip radius=3pt]
	\strand [thick] (inc) to (trd);
	\strand [thick,blue] (trum) to [out=120, in=0] (ctrl) to [out=180,in=120] (truw);
	\strand [thick] (true) to [out=60,in =-90] (bo);
	\flipcrossings{2}
	\end{knot}
	\end{tikzpicture}
}
:=
\sum\limits_{i\in\cat{O}(\cat{A})}
\hbox{
	\begin{tikzpicture}[baseline=(current  bounding  box.center)]
	\node (inc) at (0,0.5){$\mathbb{I}_s\otimes_s z$};
	\node (bo) at (0,3){$z$};
	\node (il) at (-0.35,1.4){$i$};
	\node (tr) at (0,1.1) {$\bigtriangledown$};
	\coordinate (trd) at (0,1);
	\coordinate (truw) at (-0.15,1.2);
	\coordinate (true) at (0.1,1.2);
	\coordinate (trum) at (-0.1,1.2);
	
	\coordinate (ctrl) at (0.175,2.3);
	
	\begin{knot}[clip width=4,clip radius = 3pt]
	\strand [thick] (inc) to (trd);
	\strand [thick] (trum) to [out=115, in=0] (ctrl) to [out=180,in=120] (truw);
	\strand [thick] (true) to [out=60,in =-90] (bo);
	\flipcrossings{2}
	\end{knot}
	\end{tikzpicture}
},
\end{equation}
where the double strand coming out of the inclusion on the left hand side denotes the identity on the object $\mathbb{I}_s$ (c.f. the convention made above). The right unitor is obtained by reflecting the above diagram in a vertical line.
\begin{equation}\label{STrightunitor}
\hbox{
	\begin{tikzpicture}[baseline=(current  bounding  box.center)]
	\node (inc) at (0,0.5){$ z\otimes_s\mathbb{I}_s$};
	\node (bo) at (0,3){$z$};
	\node (tr) at (0,1.1) {$\bigtriangledown$};
	\coordinate (trd) at (0,1);
	\coordinate (truw) at (0.15,1.2);
	\coordinate (true) at (-0.1,1.2);
	\coordinate (trum) at (0.1,1.2);
	
	\coordinate (ctrl) at (-0.175,2.3);
	
	\begin{knot}[clip width=4,clip radius=3pt]
	\strand [thick] (inc) to (trd);
	\strand [thick,blue] (trum) to [out=60, in=180] (ctrl) to [out=0,in=60] (truw);
	\strand [thick] (true) to [out=120,in =-90] (bo);
	\flipcrossings{2}
	\end{knot}
	\end{tikzpicture}
}
:=
\sum\limits_{i\in\cat{O}(\cat{A})}
\hbox{
	\begin{tikzpicture}[baseline=(current  bounding  box.center)]
	\node (inc) at (0,0.5){$z\otimes_s \mathbb{I}_s$};
	\node (bo) at (0,3){$z$};
	\node (il) at (0.35,1.4){$i$};
	\node (tr) at (0,1.1) {$\bigtriangledown$};
	\coordinate (trd) at (0,1);
	\coordinate (truw) at (0.15,1.2);
	\coordinate (true) at (-0.1,1.2);
	\coordinate (trum) at (0.1,1.2);

	\coordinate (ctrl) at (-0.175,2.3);

	\begin{knot}[clip width=4,clip radius=3pt]
	\strand [thick] (inc) to (trd);
	\strand [thick] (trum) to [out=60, in=180] (ctrl) to [out=0,in=60] (truw);
	\strand [thick] (true) to [out=120,in =-90] (bo);
	\flipcrossings{2}
	\end{knot}
	\end{tikzpicture}
}.
\end{equation}

We claim, and prove below in Lemma \ref{STunitisos}, that the left unitor has an inverse given by:
\begin{equation}\label{STunitorinverses}
\hbox{
	\begin{tikzpicture}[baseline=(current  bounding  box.center)]
	\node (inc) at (0,2){$\mathbb{I}_s\otimes_s b$};
	\node (bo) at (0,-0.7){$b$};

	\node (tr1n) at (0,1.2) {$\bigtriangleup$};
	\coordinate (trd) at (0,1.35);
	\coordinate (truw) at (-0.15,1.12);
	\coordinate (true) at (0.1,1.12);
	\coordinate (trum) at (-0.1,1.12);
	
	\coordinate (ctrl) at (0.4,0.0);
	
	\begin{knot}[clip width=4, clip radius = 3pt]
	\strand [thick] (inc) to (trd);
	\strand [thick,blue] (trum) to [out=-120, in=0] (ctrl) to [out=180,in=-120] (truw);
	\strand [thick] (true) to [out=-55,in =90] (bo);
	\flipcrossings{2}
	\end{knot}
	\end{tikzpicture}
}
:=
\sum\limits_{i\in\cat{O}(\cat{A})}\frac{d_i}{D}
\hbox{
	\begin{tikzpicture}[baseline=(current  bounding  box.center)]
	\node (inc) at (0,2){$\mathbb{I}_s\otimes_s b$};
	\node (bo) at (0,-0.7){$b$};
	
	\node (il) at (-0.35,0.9){$i$};
	
	\node (tr1n) at (0,1.2) {$\bigtriangleup$};
	\coordinate (trd) at (0,1.35);
	\coordinate (truw) at (-0.15,1.12);
	\coordinate (true) at (0.1,1.12);
	\coordinate (trum) at (-0.1,1.12);
	
	\coordinate (ctrl) at (0.4,0.0);
	
	\begin{knot}[clip width=4,clip radius = 3pt]
	\strand [thick] (inc) to (trd);
	\strand [thick] (trum) to [out=-115, in=0] (ctrl) to [out=180,in=-120] (truw);
	\strand [thick] (true) to [out=-55,in =90] (bo);
	\flipcrossings{2}
	\end{knot}
	\end{tikzpicture}
},
\end{equation}
and the inverse for the right unitor is correspondingly given by reflecting the above diagram in a vertical line. To prove these statements, and to show that this indeed gives the monoidal unit, we will make use of the following property we will refer to as \emph{snapping}:

\begin{lem}[Snapping]\label{STsnapping}
	For any $c\in\dcentcat{A}$ we have:
	$$
	\hbox{
		\begin{tikzpicture}[baseline=(current  bounding  box.center)]
		\node (i) at (0,0) {$\mathbb{I}_s$};
		\coordinate (ip) at (-0.1,0.27);
		\coordinate (o) at (0,3);
		\coordinate (op) at (-0.1,3);
		
		\node (a) at (0.3,0){$z''$};
		\coordinate (ao) at (0.3,3);
		
		\coordinate (west) at (-0.4,1.6);
		\coordinate (north) at (0.2,1.8);
		\coordinate (east) at (0.5,1.6);
		\coordinate (south) at (0.2,1.4);

		\begin{knot}[clip width=4,clip radius =3pt]
		\strand [thick,blue] (i) to (o);
		\strand [thick,blue] (ip) to (op);
		\strand [thick] (a) to [out=90,in=-90] (ao);
		\strand [blue, thick] (west) to [out=90,in=-180] (north) to [out=0,in=90] (east) to [out=-90,in=0] (south) to [out=-180,in=-90] (west);
		\flipcrossings{2,4,5}
		\end{knot}
		\end{tikzpicture}
	}
	=
	\hbox{
		\begin{tikzpicture}[baseline=(current  bounding  box.center)]
		\node (i) at (0,0) {$\mathbb{I}_s$};
		\coordinate (ip) at (-0.1,0.27);
		\coordinate (o) at (0,3);
		\coordinate (op) at (-0.1,3);
		
		\node (a) at (0.3,0){$z''$};
		\coordinate (ao) at (0.3,3);
		
		\coordinate (uc) at (0.5,2);
		\coordinate (lc) at (0.5,1.2);
		
		\begin{knot}[clip width=4,clip radius=4pt]
		\strand [thick,blue] (i) to [out=90,in=-90] (lc) to [out=90,in=90] (ip);
		\strand [thick,blue] (o) to [out=-90,in=90] (uc) to [out=-90,in=-90] (op);
		\strand [thick] (a) to [out=90,in=-90] (ao);
		\flipcrossings{2,4,5}
		\end{knot}
		\end{tikzpicture}
	}.
	$$
\end{lem}

\begin{proof}
	Unpacking the definition of the half-braiding on $\mathbb{I}_s$, we get:
	\begin{equation}
	\hbox{
		\begin{tikzpicture}[baseline=(current  bounding  box.center)]
		\coordinate (truw) at (-0.1,0.2);
		\coordinate (true) at (0.3,0.2);
		\coordinate (trum) at (0,0.2);
		
		\coordinate (truw1) at (-0.1,3.12);
		\coordinate (true1) at (0.3,3.12);
		\coordinate (trum1) at (0,3.12);

		\coordinate (west) at (-0.4,1.6);
		\coordinate (north) at (0.2,1.8);
		\coordinate (east) at (0.6,1.6);
		\coordinate (south) at (0.2,1.4);
		
		\begin{knot}[clip width=4,clip radius=2pt]
		\strand [blue, thick] (west) to [out=90,in=-180] (north) to [out=0,in=90] (east) to [out=-90,in=0] (south) to [out=-180,in=-90] (west);
		\strand [thick,blue] (trum) to [out=90,in=-90] (trum1);
		\strand [thick,blue] (truw) to [out=90, in=-90] (truw1);
		\strand [thick] (true) to [out=90,in =-90] (true1);
		\flipcrossings{1,3,6}
		\end{knot}
		\end{tikzpicture}
	}
	=\sum\limits_{i,j,k\in\cat{O}(\cat{A})}\sum\limits_{\phi\in B(ki,j)}\frac{d_k}{D}
	\hbox{
		\begin{tikzpicture}[baseline=(current  bounding  box.center)]

		\node (il) at (-0.4,0.5){$i$};
		\node (jl) at (-0.6,2.3){$j$};
		\node (kl) at (-0.8,1.4){$k$};

		\coordinate (truw) at (-0.2,0.2);
		\coordinate (true) at (0.5,0.2);
		\coordinate (trum) at (-0.1,0.2);
		
		\coordinate (truw1) at (-0.2,3.3);
		\coordinate (true1) at (0.5,3.3);
		\coordinate (trum1) at (-0.1,3.3);
		
		\node (phi) at (-0.5,1.75)[draw]{$\phi$};
		\node (phid) at (0,2.35)[draw]{$\phi^*$};

		\coordinate (south) at (0, 1);
		\coordinate (east) at (1,1.3);
		
		\begin{knot}[clip width=4]
		\strand [thick] (phi) to [out=-110,in=-90] (east);
		\strand [thick] (east) to [out=90,in=-70] (phid);
		\strand [thick] (trum) to [out=90,in=-100] (phid);
		\strand [thick] (truw) to [out=90, in=-70] (phi);
		\strand [thick] (phi) to [out=90,in=-90] (truw1);
		\strand [thick] (phid) to [out=90,in=-90] (trum1);
		\strand [thick] (true) to [out=90,in =-90] (true1);
		\flipcrossings{3}
		\end{knot}
		\end{tikzpicture}
	}.
	\end{equation}
	
	We can manipulate the summands on the right hand side, using Equation \eqref{STtransparancy} to bring the middle incoming strand to the front, and then pushing $\phi^*$ to the right passing over the straight strand, to get:
	$$
	\hbox{
		\begin{tikzpicture}[baseline=(current  bounding  box.center)]
		\node (il) at (-0.4,0.5){$i$};
		\node (jl) at (-0.6,2.3){$j$};
		\node (kl) at (-0.8,1.35){$k$};

		\coordinate (truw) at (-0.1,0);
		\coordinate (true) at (0.3,0);
		\coordinate (trum) at (0,0);
		
		\coordinate (truw1) at (-0.1,3.12);
		\coordinate (true1) at (0.3,3.12);
		\coordinate (trum1) at (0,3.12);
		
		\node (phi) at (-0.5,1.75)[draw]{$\phi$};
		\node (phid) at (1,1.5)[draw]{$\phi^*$};

		\coordinate (south) at (0, 0.9);
		\coordinate (east) at (1,1.3);
		
		\begin{knot}[clip width=4,clip radius=3pt]
		\strand [thick] (phi) to [out=-110,in=-70] (phid);
		\strand [thick] (trum) to [out=90,in=-100] (phid);
		\strand [thick] (truw) to [out=90, in=-70] (phi);
		\strand [thick] (phi) to [out=90,in=-90] (truw1);
		\strand [thick] (phid) to [out=90,in=-90] (trum1);
		\strand [thick] (true) to [out=90,in =-90] (true1);
		\flipcrossings{1,3}
		\end{knot}
		\end{tikzpicture}
	}
	=
	\hbox{
		\begin{tikzpicture}[baseline=(current  bounding  box.center)]
		\node (il) at (-0.35,0.2){$i$};
		\node (jl) at (-0.3,3.6){$j$};
		\node (kl) at (-0.35,1.6){$k$};
		
		\coordinate (truw) at (-0.1,0);
		\coordinate (true) at (0.5,0);
		\coordinate (trum) at (0,0);
		
		\coordinate (truw1) at (-0.1,3.8);
		\coordinate (true1) at (0.5,3.8);
		\coordinate (trum1) at (0,3.8);
		
		\node (phi) at (-0.25,3.1)[draw]{$\phi$};
		\node (phid) at (-1,1.5)[draw]{$\phi^*$};
		
		\coordinate (o) at (0,4);
		
		\coordinate (east) at (0.9,1.1);
		\coordinate (lc) at (0.9,2.5);
		\coordinate (rc) at (-0.5,1.3);
		
		\begin{knot}[clip width=4]
		\strand [thick] (phi) to [out=-110,in=90] (rc) to [out=-90,in=-70] (phid);
		\strand [thick] (trum) to [out=90,in=-90] (east) to [out=90,in=-100] (phid);
		\strand [thick] (truw) to [out=90, in=-70] (phi);
		\strand [thick] (phi) to [out=90,in=-90] (truw1);
		\strand [thick] (phid) to [out=90,in=-40] (lc) to [out=140,in=-90] (trum1);
		\strand [thick] (true) to [out=90,in =-90] (true1);
		\flipcrossings{1,5,6,4}
		\end{knot}
		\fill (rc) circle[radius=2pt];
		\end{tikzpicture}
	}
	=
	\hbox{
		\begin{tikzpicture}[baseline=(current  bounding  box.center)]
		\node (il) at (-0.35,0){$i$};
		\node (jl) at (-0.3,3.6){$j$};
		\node (kl) at (-0.3,1.8){$k$};
		
		\coordinate (truw) at (-0.1,-0.3);
		\coordinate (true) at (0.4,-0.3);
		\coordinate (trum) at (0,-0.3);
		
		\coordinate (truw1) at (-0.1,3.62);
		\coordinate (true1) at (0.4,3.62);
		\coordinate (trum1) at (0,3.62);
		
		\node (phi) at (-0.25,2.3)[draw]{$\phi$};
		\node (phid) at (-0.9,0.9)[draw]{$\phi^t$};

		\coordinate (east) at (0.9,0.9);
		\coordinate (lc) at (0.9,2.5);
		\coordinate (rc) at (-0.5,1.3);
		\coordinate (llc) at (-1.4,0.8);
		\coordinate (luc) at (-0.6,3);
		
		\coordinate (north) at (-0.1,3.4);
		\coordinate (below) at (-0.1,0);
		
		\begin{knot}[clip width=4,clip radius=3pt]
		\strand [thick] (phi) to [out=-110,in=90] (phid);
		\strand [thick] (trum) to [out=90,in=-90] (east) to [out=90,in=60] (phid);
		\strand [thick] (truw) to [out=90,in=-90] (below) to [out=90, in=-70] (phi);
		\strand [thick] (phi) to [out=90,in=-90] (north) to [out=90,in=-90] (truw1);
		\strand [thick] (phid) to [out=-90,in=-90] (llc) to [out=90,in=-120] (luc) to [out=60,in=-90] (lc) to [out=90,in=-90] (trum1);
		\strand [thick] (true) to [out=90,in =-90] (true1);
		\flipcrossings{3,5,4}
		\end{knot}
		\fill (north) circle[radius=2pt];
		\fill (below) circle[radius=2pt];
		\end{tikzpicture}
		}.
	$$
To get the first equality, we pushed $\phi^*$ all the way to the left, passing behind the straight strand and in front of the $k$ strand. A self-intersection gave a twist (see Equation \eqref{STbulletdef}) on the $k$ strand while doing this. The third equality plugs in the definition of $\phi^*$, and the equality:
	\begin{equation}\label{STtwisttrick}
		\hbox{
		\begin{tikzpicture}[baseline=(current  bounding  box.center)]
		
		\node (phi) at (0,1)[draw]{$\phi$};
		
		\coordinate (wc) at (-0.2,0.5);
		\coordinate (ec) at (0.2,0.5);
		
		\coordinate (nc) at (0,1.5);
		
		\node (k) at (-0.3,0){$k$};
		\node (i) at (0.3,0){$i$};
		
		\node (j) at (0,2){$j$};
		
		\begin{knot}[clip width=4,clip radius=3pt]
		\strand [thick] (k) to [out=90,in=-110] (wc) to [out=70,in=-110] (phi);
		\strand [thick] (i) to [out=90,in=-70] (phi);
		\strand [thick] (phi) to [out=90,in=-90] (nc) to [out=90,in=-90] (j);
		\end{knot}
		\fill (wc) circle[radius=2pt];
		\end{tikzpicture}
	}
	=
			\hbox{
		\begin{tikzpicture}[baseline=(current  bounding  box.center)]
		
		\node (phi) at (0,1)[draw]{$\phi$};
		
		\coordinate (wc) at (-0.2,0.5);
		\coordinate (ec) at (0.2,0.5);
		
		\coordinate (nc) at (0,1.5);
		
		\node (k) at (-0.3,0){$k$};
		\node (i) at (0.3,0){$i$};
		
		\node (j) at (0,2){$j$};
		
		\begin{knot}[clip width=4,clip radius=3pt]
		\strand [thick] (k) to [out=90,in=-110] (phi);
		\strand [thick] (i) to [out=90,in=-70] (ec) to [out=110,in=-70] (phi);
		\strand [thick] (phi) to [out=90,in=-90] (nc) to [out=90,in=-90] (j);
		\end{knot}
		\fill (ec) circle[radius=2pt];
		\fill (nc) circle[radius=2pt];
		\end{tikzpicture}
	}.
	\end{equation}
	This equality follows from the naturality of the twist, together with the fact that in a symmetric fusion category the twist is a monoidal automorphism of the identity functor that squares to $1$. We can now  perform the sum over $\phi$ and $k$ using Lemma \ref{STdirectsumversion} to obtain:
	$$
	\hbox{
		\begin{tikzpicture}[baseline=(current  bounding  box.center)]
		\coordinate (truw) at (-0.1,0.2);
		\coordinate (true) at (0.3,0.2);
		\coordinate (trum) at (0,0.2);
		
		\coordinate (truw1) at (-0.1,3.12);
		\coordinate (true1) at (0.3,3.12);
		\coordinate (trum1) at (0,3.12);

		\coordinate (west) at (-0.4,1.6);
		\coordinate (north) at (0.2,1.8);
		\coordinate (east) at (0.6,1.6);
		\coordinate (south) at (0.2,1.4);
		
		\begin{knot}[clip width=4,clip radius=2pt]
		\strand [blue, thick] (west) to [out=90,in=-180] (north) to [out=0,in=90] (east) to [out=-90,in=0] (south) to [out=-180,in=-90] (west);
		\strand [thick,blue] (trum) to [out=90,in=-90] (trum1);
		\strand [thick,blue] (truw) to [out=90, in=-90] (truw1);
		\strand [thick] (true) to [out=90,in =-90] (true1);
		\flipcrossings{1,3,6}
		\end{knot}
		\end{tikzpicture}
	}
	=
	\sum\limits_{i,j\in\cat{O}(\cat{A})}\frac{d_j}{D}
	\hbox{
		\begin{tikzpicture}[baseline=(current  bounding  box.center)]
		\node (il) at (-0.35,0){$i$};
		\node (jl) at (-0.3,3.6){$j$};

		\coordinate (truw) at (-0.1,-0.3);
		\coordinate (true) at (0.4,-0.3);
		\coordinate (trum) at (0,-0.3);
		
		\coordinate (truw1) at (-0.1,3.62);
		\coordinate (true1) at (0.4,3.62);
		\coordinate (trum1) at (0,3.62);
		
		\coordinate (phi) at (-0.25,2.1);
		\coordinate (phid) at (-0.5,0.9);
				
		\coordinate (east) at (0.9,0.9);
		\coordinate (lc) at (0.9,2.5);
		\coordinate (rc) at (0,1.3);
		\coordinate (llc) at (-1,2);
		\coordinate (luc) at (-0.6,3);
		
		\coordinate (north) at (-0.1,3.4);
		\coordinate (below) at (-0.1,0);

		\begin{knot}[clip width=4,clip radius=3pt]
		\strand [thick] (trum) to [out=90,in=-90] (east) to [out=90,in=-90] (phid);
		\strand [thick] (truw) to [out=90, in=-90] (below) to [out=90, in=-90] (rc) to [out=90, in=70] (phid);
		\strand [thick] (truw1) to [out=-90,in=90] (north) to [out=-90,in=70] (phi);
		\strand [thick] (phi) to [out=-110,in=-90] (llc) to [out=90,in=-120] (luc) to [out=60,in=-90] (lc) to [out=90,in=-90] (trum1);
		\strand [thick] (true) to [out=90,in =-90] (true1);
		\flipcrossings{3,5,,4}
		\end{knot}
		\fill (north) circle[radius=2pt];
		\fill (below) circle[radius=2pt];
		\end{tikzpicture}
	}
	=\sum\limits_{i,j\in\cat{O}(\cat{A})}\frac{d_j}{D}
	\hbox{
		\begin{tikzpicture}[baseline=(current  bounding  box.center)]
		\coordinate (i) at (0,0);
		\coordinate (ip) at (-0.1,0);
		\coordinate (o) at (0,3);
		\coordinate (op) at (-0.1,3);
		
		\coordinate (a) at (0.3,0);
		\coordinate (ao) at (0.3,3);
		
		\coordinate (uc) at (0.5,2);
		\coordinate (lc) at (0.5,1.2);
		
		\coordinate (ka) at (-0.1,0);

		\node (il) at (-0.2,0.4){$i$};
		\node (jl) at (-0.2,2.7){$j$};

		\begin{knot}[clip width=4,clip radius=4pt]
		\strand [thick] (i) to [out=90,in=-90] (lc) to [out=90,in=90] (ip);
		\strand [thick] (o) to [out=-90,in=90] (uc) to [out=-90,in=-90] (op);
		\strand [thick] (a) to [out=90,in=-90] (ao);
		\flipcrossings{2,4,5}
		\end{knot}
		\end{tikzpicture}
	}
	=
	\hbox{
	\begin{tikzpicture}[baseline=(current  bounding  box.center)]
	\coordinate (i) at (0,0);
	\coordinate (ip) at (-0.1,0);
	\coordinate (o) at (0,3);
	\coordinate (op) at (-0.1,3);
	
	\coordinate (a) at (0.3,0);
	\coordinate (ao) at (0.3,3);
	
	\coordinate (uc) at (0.5,2);
	\coordinate (lc) at (0.5,1.2);
	
	\coordinate (ka) at (-0.1,0);
	\begin{knot}[clip width=4,clip radius=4pt]
	\strand [thick,blue] (i) to [out=90,in=-90] (lc) to [out=90,in=90] (ip);
	\strand [thick,blue] (o) to [out=-90,in=90] (uc) to [out=-90,in=-90] (op);
	\strand [thick] (a) to [out=90,in=-90] (ao);
	\flipcrossings{2,4,5}
	\end{knot}
	\end{tikzpicture}
	},
	$$
	where in the second equality we cancelled twists with self-intersections.
\end{proof}

The object $\mathbb{I}_s$ does indeed act as the unit for the symmetric tensor product on $\dcentcat{A}$:
\begin{lem}\label{STunitisos}
The symmetric tensor product $\mathbb{I}_s\otimes_s z$ of $\mathbb{I}_s$ with any object $z\in\dcentcat{A}$ is isomorphic to $z$ along the morphism given in Equation \eqref{STleftunitor}. Similarly, $z\otimes_s\mathbb{I}_s\cong z$ along the morphism given in Equation \eqref{STrightunitor}.
\end{lem}
\begin{proof}
We first prove that the morphisms from Equations \eqref{STleftunitor} and \eqref{STunitorinverses} are inverse to each other. We then establish they are morphisms in $\dcentcat{A}$, a priori they might not commute with the chosen half-braidings. Composing the morphisms along $\mathbb{I}_s\otimes_s z$, we see we need to check that:
\begin{equation}\label{STringcomesoff}
\hbox{
\begin{tikzpicture}[baseline=(current  bounding  box.center)]

\node (bo) at (0,3){$z$};

\node (tr) at (0,1.1) {$\bigtriangledown$};
\coordinate (trd) at (0,1);
\coordinate (truw) at (-0.1,1.2);
\coordinate (true) at (0.1,1.2);
\coordinate (trum) at (0,1.2);

\coordinate (ctrl) at (0.175,2.3);

\node (b) at (0,-1.7){$z$};

\node (tr1n) at (0,0.2) {$\bigtriangleup$};
\coordinate (trd1) at (0,0.35);
\coordinate (truw1) at (-0.1,0.12);
\coordinate (true1) at (0.1,0.12);
\coordinate (trum1) at (0,0.12);

\coordinate (ctrl1) at (0.3,-0.9);

\begin{knot}[clip width=4,clip radius = 3pt]
\strand [thick,blue] (trum) to [out=90, in=0] (ctrl) to [out=180,in=120] (truw);
\strand [thick] (true) to [out=60,in =-90] (bo);
\strand [thick] (trd1) to (trd);
\strand [thick,blue] (trum1) to [out=-90, in=0] (ctrl1) to [out=180,in=-120] (truw1);
\strand [thick] (true1) to [out=-60,in =90] (b);
\flipcrossings{2,4}
\end{knot}
\end{tikzpicture}
}
=
\hbox{
	\begin{tikzpicture}[baseline=(current  bounding  box.center)]
	
	\node (bo) at (0,3){$z$};
	
	\coordinate (truw) at (-0.1,1.2);
	\coordinate (trum) at (0,1.2);
	
	\coordinate (ctrl) at (0.175,2.3);

	\node (b) at (0,-1.7){$z$};
	
	\coordinate (west) at (-0.4,0.6);
	\coordinate (north) at (0.2,0.8);
	\coordinate (east) at (0.6,0.6);
	\coordinate (south) at (0.2,0.4);

	\coordinate (truw1) at (-0.1,0.12);;
	\coordinate (trum1) at (0,0.12);
	
	\coordinate (ctrl1) at (.3,-0.9);
	
	\begin{knot}[clip width=4,clip radius = 3pt]
	\strand [thick,blue] (trum) to [out=90, in=0] (ctrl) to [out=180,in=90] (truw) to [out=-90,in=90] (truw1) to [out=-90, in=180] (ctrl1) to [out=0,in=-90] (trum1) to [out=90,in=-90] (trum);
	\strand [thick] (b) to [out=80,in =-80] (bo);
	\strand [blue, thick] (west) to [out=90,in=-180] (north) to [out=0,in=90] (east) to [out=-90,in=0] (south) to [out=-180,in=-90] (west);
	\flipcrossings{2,3,6,7,8}
	\end{knot}
	\end{tikzpicture}
}
=
\hbox{
	\begin{tikzpicture}[baseline=(current  bounding  box.center)]
	
	\node (bo) at (0,3){$z$};
	
	\coordinate (truw) at (-0.1,1.4);
	\coordinate (trum) at (0,1.4);
	
	\coordinate (ctrl) at (0.175,2.3);

	\node (b) at (0,-1.7){$z$};
	
	\coordinate (north) at (0.5,0.7);
	\coordinate (south) at (0.5,0.5);

	\coordinate (truw1) at (-0.1,-0.10);;
	\coordinate (trum1) at (0,-0.10);
	
	\coordinate (ctrl1) at (.3,-0.9);
	
	\begin{knot}[clip width=4,clip radius = 3pt]
	\strand [thick,blue] (trum) to [out=90, in=0] (ctrl) to [out=180,in=90] (truw) to [out=-90,in=180] (north) to [out=0,in=-90] (trum); 
	\strand [thick,blue] (truw1) to [out=-90, in=180] (ctrl1) to [out=0,in=-90] (trum1) to [out=90,in=0] (south) to [out=180,in=90] (truw1);
	\strand [thick] (b) to [out=80,in =-80] (bo);
	\flipcrossings{2,3,5,8}
	\end{knot}
	\end{tikzpicture}
}
=
\hbox{
	\begin{tikzpicture}[baseline=(current  bounding  box.center)]
	
	\node (bo) at (0,3){$z$};

	\node (b) at (0,-1.7){$z$};
	
	\begin{knot}[clip width=4,clip radius = 3pt]
	\strand [thick] (b) to [out=80,in =-80] (bo);
	\end{knot}
	\end{tikzpicture}
},
\end{equation}
where we used that projection follow by inclusion gives the idempotent, followed by snapping (Lemma \ref{STsnapping}), and in the last step the rings come off and evaluate to $1$. For the other composition, along $z$, note that, using Equation \eqref{STinclprojprops}:
$$
\hbox{
\begin{tikzpicture}[baseline=(current  bounding  box.center)]
\node (ib) at (0,0){$\mathbb{I}_s\otimes_s z$};
\coordinate (ibo) at (0,2);

\begin{knot}[clip width=4]
\strand [thick] (ib) to (ibo);
\end{knot}
\end{tikzpicture}
}
=
\hbox{
	\begin{tikzpicture}[baseline=(current  bounding  box.center)]
	\node (inc) at (0,-0.5){$\mathbb{I}_s\otimes_s z$};
	\node (bl) at (0.3,2.6){$z$};
	
	\node (tr) at (0,0.1) {$\bigtriangledown$};
	\coordinate (trd) at (0,0);
	\coordinate (truw) at (-0.1,0.2);
	\coordinate (true) at (0.1,0.2);
	\coordinate (trum) at (0,0.2);
	
	\node (tr1n) at (0,3.2) {$\bigtriangleup$};
	\coordinate (trd1) at (0,3.35);
	\coordinate (truw1) at (-0.1,3.12);
	\coordinate (true1) at (0.1,3.12);
	\coordinate (trum1) at (0,3.12);
	
	\coordinate (o) at (0,3.5);
	
	\coordinate (west) at (-0.4,1.6);
	\coordinate (north) at (0.2,1.8);
	\coordinate (east) at (0.6,1.6);
	\coordinate (south) at (0.2,1.4);
	
	\begin{knot}[clip width=4,clip radius=2pt]
	\strand [blue, thick] (west) to [out=90,in=-180] (north) to [out=0,in=90] (east) to [out=-90,in=0] (south) to [out=-180,in=-90] (west);
	\strand [thick] (inc) to (trd);
	\strand [thick,blue] (trum) to [out=90,in=-90] (trum1);
	\strand [thick,blue] (truw) to [out=90, in=-90] (truw1);
	\strand [thick] (true) to [out=80,in =-80] (true1);
	\strand [thick] (trd1) to (o);
	\flipcrossings{1,3,6}
	\end{knot}
	\end{tikzpicture}
}
=
\hbox{
	\begin{tikzpicture}[baseline=(current  bounding  box.center)]
	\node (inc) at (0,-1){$\mathbb{I}_s\otimes_s z$};
	\node (bl) at (0.3,3.5){$z$};
	
	\node (tr) at (0,-0.4) {$\bigtriangledown$};
	\coordinate (trd) at (0,0.-0.5);
	\coordinate (truw) at (-0.1,-0.3);
	\coordinate (true) at (0.1,-0.3);
	\coordinate (trum) at (0,-0.3);
	
	\node (tr1n) at (0,3.7) {$\bigtriangleup$};
	\coordinate (trd1) at (0,3.85);
	\coordinate (truw1) at (-0.1,3.62);
	\coordinate (true1) at (0.1,3.62);
	\coordinate (trum1) at (0,3.62);

	\coordinate (o) at (0,4);
	
	\coordinate (east) at (0.9,0.9);
	\coordinate (lc) at (0.9,2.5);

	\begin{knot}[clip width=4,clip radius=3pt]
	\strand [thick] (inc) to (trd);
	\strand [thick,blue] (trum) to [out=90,in=-90] (east) to [out=90,in=120] (truw);
	\strand [thick,blue] (truw1) to [out=-120,in=-90] (lc) to [out=90,in=-90] (trum1);
	\strand [thick] (true) to [out=60,in =-60] (true1);
	\strand [thick] (trd1) to (o);
	\flipcrossings{2,3}
	\end{knot}
	\end{tikzpicture}
},
$$
using snapping in the last step. 

To see that the morphisms are indeed morphisms in $\dcentcat{A}$, we check that, pushing the crossing strand (which represents an object of $\cat{A}$) further and further up:
$$
\hbox{
	\begin{tikzpicture}[baseline=(current  bounding  box.center)]
	\node (inc) at (0,-0.5){$\mathbb{I}_s\otimes_s z$};
	\node (bo) at (0,3.4){$z$};
	\node (tr) at (0,1.1) {$\bigtriangledown$};
	\coordinate (trd) at (0,1);
	\coordinate (truw) at (-0.1,1.2);
	\coordinate (true) at (0.1,1.2);
	\coordinate (trum) at (0,1.2);
	
	\coordinate (ctrl) at (0.175,2.3);
	
	\coordinate (tra) at (-1,-0.5);
	\coordinate (la) at (0.5,1);
	\coordinate (lala) at (0.5,3.2);
	
	\begin{knot}[clip width=4,clip radius = 3pt]
	\strand [thick] (inc) to (trd);
	\strand [thick] (trum) to [out=115, in=0] (ctrl) to [out=180,in=120] (truw);
	\strand [thick] (true) to [out=60,in =-90] (bo);
	\strand [thick] (tra) to [out=90,in=-90] (la) to [out=90,in=-90] (lala);
	\flipcrossings{3}
	\end{knot}
	\end{tikzpicture}
}
=
\hbox{
	\begin{tikzpicture}[baseline=(current  bounding  box.center)]
	\node (inc) at (0,0.5){$\mathbb{I}_s\otimes_s z$};
	\node (bo) at (0,4.4){$z$};
	\node (tr) at (0,1.1) {$\bigtriangledown$};
	\coordinate (trd) at (0,1);
	\coordinate (truw) at (-0.1,1.2);
	\coordinate (true) at (0.1,1.2);
	\coordinate (trum) at (0,1.2);
	
	\coordinate (ctrl) at (0.175,3);
	
	\coordinate (tra) at (-1,0.5);
	\coordinate (lal) at (-1,0.7);
	\coordinate (la) at (0.5,2.2);
	\coordinate (lala) at (0.5,4.2);
	
	\begin{knot}[clip width=4,clip radius = 3pt]
	\strand [thick] (inc) to (trd);
	\strand [thick] (trum) to [out=115, in=0] (ctrl) to [out=180,in=120] (truw);
	\strand [thick] (true) to [out=60,in =-90] (bo);
	\strand [thick] (tra) to [out=90,in=-90] (lal) to [out=90,in=-90] (la) to [out=90,in=-90] (lala);
	\flipcrossings{2}
	\end{knot}
	\end{tikzpicture}
}
=
\hbox{
	\begin{tikzpicture}[baseline=(current  bounding  box.center)]
	\node (inc) at (0,-0.5){$\mathbb{I}_s\otimes_s z$};
	\node (bo) at (0,3.6){$z$};
	\node (tr) at (0,1.1) {$\bigtriangledown$};
	\coordinate (trd) at (0,1);
	\coordinate (truw) at (-0.1,1.2);
	\coordinate (true) at (0.1,1.2);
	\coordinate (trum) at (0,1.2);
	
	\coordinate (ctrl) at (0.175,2.3);
	
	\coordinate (tra) at (-1,-0.5);
	\coordinate (lal) at (-1,2);
	\coordinate (lala) at (0.5,3.4);
	
	\begin{knot}[clip width=4,clip radius = 3pt]
	\strand [thick] (inc) to (trd);
	\strand [thick] (trum) to [out=115, in=0] (ctrl) to [out=180,in=120] (truw);
	\strand [thick] (true) to [out=60,in =-90] (bo);
	\strand [thick] (tra) to [out=90,in=-90] (lal) to [out=90,in=-90] (lala);
	\flipcrossings{2}
	\end{knot}
	\end{tikzpicture}
},
$$
where the first step is slicing (Lemma \ref{STslicing}), and the second step uses Equation \eqref{STcrossinginteraction}.
The proof that $z\otimes_s\mathbb{I}_s\cong z$ along the specified isomorphisms is analogous.
\end{proof}

For $\mathbb{I}_s$ to be a unit for the symmetric tensor product, the isomorphisms from Lemma \ref{STunitisos} need to satisfy the triangle equality \cite[Equation (IT)]{Joyal1986}, that is:
$$
\begin{tikzcd}
(z\otimes_s \mathbb{I}_s)\otimes_s z' \arrow[rr]\arrow[rd]&			& z\otimes_s(\mathbb{I}_s\otimes_s z')\arrow[ld]\\
& z \otimes_s z'&.
\end{tikzcd}
$$
commutes for all $z,z'\in\dcentcat{A}$, where the downwards maps are the unitor isomorphisms (Equations \eqref{STleftunitor} and \eqref{STrightunitor}) and the top is the associator.

\begin{lem}\label{STtriangleeq}
The isomorphisms from Lemma \ref{STunitisos} satisfy the triangle equality.
\end{lem}
\begin{proof}
	We will show that the clockwise composite $z \otimes_s z' \rar (a\otimes_s \mathbb{I}_s)\otimes_s b\rar a\otimes_s(\mathbb{I}_s\otimes_s b)\rar a\otimes_sb$ is the identity on $z \otimes_s z'$. That is, we are considering the composite of
	$$
	\hbox{
		\begin{tikzpicture}[baseline=(current  bounding  box.center)]
		
		\node (cf) at (0,-0.5){$a\otimes_s(\mathbb{I}_s\otimes_s b)$};
		\node (tr) at (0,0.1) {$\bigtriangledown$};
		\coordinate (trd) at (0,0);
		\coordinate (truw) at (-0.1,00.2);
		\coordinate (true) at (0.1,0.2);

		\node (tr1a) at (0,2.9) {$\bigtriangleup$};
		\coordinate (trd1wa) at (-0.1,2.82);
		\coordinate (trd1ea) at (0.1,2.82);
		\coordinate (tru1a) at (0,3.05);
		\node (c1) at (0,3.5) {$z \otimes_s z'$};
		
		\node (trb) at (0.3,1.1) {$\bigtriangledown$};
		\coordinate (trdb) at (0.3,1);
		\coordinate (truwb) at (0.2,1.2);
		\coordinate (trueb) at (0.4,1.2);
		\coordinate (trumb) at (0.3,1.2);
			
		\coordinate (ctrl) at (0.475,2.3);

		\begin{knot}[clip width=3.5]
		\strand [thick,blue] (trumb) to [out=115, in=0] (ctrl) to [out=180,in=120] (truwb);
		\strand [thick] (cf) to (trd);
		\strand [thick] (truw) to [out=100,in=-100] (trd1wa);
		\strand [thick] (true) to [out=80,in=-90] (trdb);
		\strand [thick] (trueb) to [out=80,in=-80] (trd1ea);
		\strand [thick] (tru1a) to (c1);
		\flipcrossings{2,3}
		\end{knot}
		\end{tikzpicture}
	}
\tn{, }
		\hbox{
		\begin{tikzpicture}[baseline=(current  bounding  box.center)]
		
		\node (tr3) at (0.5,1) {$\bigtriangledown$};
		\coordinate (trd3) at (0.5,0.9);
		\coordinate (truw3) at (0.4,1.1);
		\coordinate (true3) at (0.6,1.1);
		
		\node (tr) at (0,1.6) {$\bigtriangledown$};
		\coordinate (trd) at (0,1.5);
		\coordinate (truw) at (-0.1,1.7);
		\coordinate (true) at (0.1,1.7);
		
		\node (tr14) at (0.3,2.5) {$\bigtriangleup$};
		\coordinate (trd1w4) at (0.2,2.42);
		\coordinate (trd1e4) at (0.4,2.42);
		\coordinate (tru14) at (0.3,2.65);
		
		\node (tr15) at (0,3) {$\bigtriangleup$};
		\coordinate (trd1w5) at (-0.1,2.92);
		\coordinate (trd1e5) at (0.1,2.92);
		\coordinate (tru15) at (0,3.15);

		\node (outg) at (0,4){$a\otimes_s(\mathbb{I}_s\otimes_sb)$};
		\node (inc) at (0.5,0){$(a\otimes_s\mathbb{I}_s)\otimes_sb$};

		\begin{knot}[clip width=4]
		\strand [thick] (truw) to [out=110,in=-110] (trd1w5);
		\strand [thick] (true) to [out=70, in=-110] (trd1w4);
		\strand [thick] (tru14) to [out=90,in=-70] (trd1e5);
		\strand [thick] (true3) to [out=70, in =-70] (trd1e4);
		\strand [thick] (tru15) to [out=90,in=-90] (outg);
		\strand [thick] (truw3) to [out=110,in=-90] (trd);
		\strand [thick] (inc) to [out=90,in=-90] (trd3);
		\end{knot}
		\end{tikzpicture}
	}
\tn{ and }
	\hbox{
		\begin{tikzpicture}[baseline=(current  bounding  box.center)]
		
		\node (cf) at (0,-0.5){$z \otimes_s z'$};
		\node (tr) at (0,0.1) {$\bigtriangledown$};
		\coordinate (trd) at (0,0);
		\coordinate (truw) at (-0.1,00.2);
		\coordinate (true) at (0.1,0.2);

		\node (tr1a) at (0,2.9) {$\bigtriangleup$};
		\coordinate (trd1wa) at (-0.1,2.82);
		\coordinate (trd1ea) at (0.1,2.82);
		\coordinate (tru1a) at (0,3.05);
		\node (c1) at (0,3.5) {$(a\otimes_s\mathbb{I}_s)\otimes_s b$};

		\node (tr1nb) at (-0.3,2.2) {$\bigtriangleup$};
		\coordinate (trdb) at (-0.3,2.35);
		\coordinate (truwb) at (-0.4,2.12);
		\coordinate (trueb) at (-0.2,2.12);
		\coordinate (trumb) at (-0.3,2.12);
			
		\coordinate (ctrl) at (-0.4,1.0);

		\begin{knot}[clip width=3.5]
		\strand [thick,blue] (trumb) to [out=-75, in=180] (ctrl) to [out=0,in=-60] (trueb);
		\strand [thick] (cf) to (trd);
		\strand [thick] (truw) to [out=100,in=-100] (truwb);
		\strand [thick] (true) to [out=80,in=-80] (trd1ea);
		\strand [thick] (trdb) to [out=90,in=-110] (trd1wa);
		\strand [thick] (tru1a) to (c1);
		\flipcrossings{2,3}
		\end{knot}
		\end{tikzpicture}
	}.
	$$
	When composing along the triple symmetric tensor products, we encounter Equation \eqref{STassocomp} and its mirror image. Plugging this in right away and remembering the rings are idempotent, we get
	$$
	\hbox{
		\begin{tikzpicture}[baseline=(current  bounding  box.center)]
		
		\node (cf) at (0,-0.5){$z \otimes_s z'$};
		\node (tr) at (0,0.1) {$\bigtriangledown$};
		\coordinate (trd) at (0,0);
		\coordinate (truw) at (-0.1,00.2);
		\coordinate (true) at (0.1,0.2);

		\node (tr1a) at (0,4.9) {$\bigtriangleup$};
		\coordinate (trd1wa) at (-0.1,4.82);
		\coordinate (trd1ea) at (0.1,4.82);
		\coordinate (tru1a) at (0,5.05);
		\node (c1) at (0,5.5) {$z \otimes_s z'$};

		\coordinate (trueb) at (0.05,1.4);
		\coordinate (trumb) at (-0.05,1.4);
		\coordinate (truea) at (0.05,3.6);
		\coordinate (truma) at (-0.05,3.6);

		\coordinate (ctrl) at (-0.5,0.5);
		\coordinate (rtc) at (0.5,4.4);

		\coordinate (west) at (-1,3.1);
		\coordinate (north) at (-0.2,3.4);
		\coordinate (east) at (0.5,3.1);
		\coordinate (south) at (-0.2, 2.8);
		
		\coordinate (west1) at (-0.3,2.2);
		\coordinate (north1) at (0.2,2.5);
		\coordinate (east1) at (0.95,2.2);
		\coordinate (south1) at (0.2, 1.9);

		\begin{knot}[clip width=4,clip radius = 3pt]
		\strand [blue, thick] (west) to [out=90,in=180] (north) to [out=0,in=90] (east) to [out=-90,in=0] (south) to [out=180,in=-90] (west);
		\strand [blue, thick] (east1) to [out=-90,in=0] (south1) to [out=180,in=-90](west1) to [out=90,in=180] (north1) to [out=0,in=90]  (east1);
		\strand [thick,blue] (trumb) to [out=-115, in=180] (ctrl) to [out=0,in=-90] (trueb) to [out=90,in=-90] (truea) to [out=75,in=-30] (rtc) to [out=150,in=90] (truma) to [out=-90,in=90] (trumb);
		\strand [thick] (cf) to (trd);
		\strand [thick] (truw) to [out=115,in=-115] (trd1wa);
		\strand [thick] (true) to [out=65,in=-65] (trd1ea);
		\strand [thick] (tru1a) to (c1);
		\flipcrossings{16,17,8,7,4,3,5,12,14}
		\end{knot}
		\end{tikzpicture}
	}
	=
\hbox{
	\begin{tikzpicture}[baseline=(current  bounding  box.center)]
	
	\node (cf) at (0,-0.5){$z \otimes_s z'$};
	\node (tr) at (0,0.1) {$\bigtriangledown$};
	\coordinate (trd) at (0,0);
	\coordinate (truw) at (-0.1,00.2);
	\coordinate (true) at (0.1,0.2);

	\node (tr1a) at (0,4.9) {$\bigtriangleup$};
	\coordinate (trd1wa) at (-0.1,4.82);
	\coordinate (trd1ea) at (0.1,4.82);
	\coordinate (tru1a) at (0,5.05);
	\node (c1) at (0,5.5) {$z \otimes_s z'$};

	\coordinate (trueb) at (0.8,1.3);
	\coordinate (trumb) at (0.05,1.45);
	
	\coordinate (truea) at (-0.05,3.75);
	\coordinate (truma) at (-0.8,3.6);

	\coordinate (ctrl) at (-0.5,0.7);
	\coordinate (rtc) at (0.5,4.4);

	\coordinate (west) at (-1,2.8);
	\coordinate (north) at (-0.2,3);
	\coordinate (east) at (0.95,2.2);
	\coordinate (south) at (0.2, 1.9);

	\begin{knot}[clip width=4,clip radius = 3pt]
	\strand [blue, thick] (west) to [out=90,in=180] (north) to [out=0,in=90] (east) to [out=-90,in=0] (south) to [out=180,in=-90] (west);
	\strand [thick,blue] (trumb) to [out=180, in=180] (ctrl) to [out=0,in=-90] (trueb) to [out=90,in=0] (trumb);
	\strand [thick,blue] (truea) to [out=0,in=0] (rtc) to [out=180,in=90] (truma) to [out=-90,in=180] (truea);
	\strand [thick] (cf) to (trd);
	\strand [thick] (truw) to [out=115,in=-115] (trd1wa);
	\strand [thick] (true) to [out=65,in=-65] (trd1ea);
	\strand [thick] (tru1a) to (c1);
	\flipcrossings{6,8,4,1,12,10}
	\end{knot}
	\end{tikzpicture}
}
	=
	\hbox{
		\begin{tikzpicture}[baseline=(current  bounding  box.center)]

		\node (inc) at (-0.5,-0.4){$z \otimes_s z'$};

		\coordinate (outg) at (-0.5,2.8);
		
		\begin{knot}[clip width=4]
		\strand [thick] (inc)  to [out=90, in =-90] (outg);
		\end{knot}
		\end{tikzpicture}
	}
	.
	$$
	Here the first equality is an application of snapping to the two horizontal rings (Lemma \ref{STsnapping}), the second uses the fact that the rings cancel with the inclusion and projection morphisms.
\end{proof}

\subsection{The Symmetric Tensor Product as a Functor}
We have so far given objectwise definitions of the ingredients needed to define the symmetric tensor product. In this section we will combine these definitions to make the symmetric tensor product into a monoidal structure. The final ingredient needed is a definition of the symmetric tensor product on morphisms.

\subsubsection{Definition on Morphisms}
\begin{df}\label{STsymtensorfunctor}
The \emph{symmetric tensor product}
$$
\otimes_s: \dcentcat{A}\boxtimes\dcentcat{A}\rar \dcentcat{A}
$$
is the assignment defined on objects in Definition \ref{STsymtensobj}. On morphisms $f:z\rar z'$ and $g:y\rar y'$, it is given by 
\begin{equation}\label{STsymtensmorph}
\hbox{
\begin{tikzpicture}[baseline=(current  bounding  box.center)]

\node (cf) at (-1.1,-0.5){$z$};
\node (inc) at (0.3,-0.5){$y$};

\node (f) at (-1.1,1.5)[draw,minimum height=15pt]{$f$};
\node (ot) at (-0.375,1.4){$\mathop{\otimes}\limits_\Vect$};
\node (g) at (0.3,1.5)[draw,,minimum height=15pt]{$g$};

\node (c1) at (-1.1,3) {$z'$};

\node (outg) at (0.3,3){$y'$};

\begin{knot}[clip width=4]
\strand [thick] (cf) to (f) to (c1);
\strand [thick] (inc) to (g) to  (outg);
\end{knot}
\end{tikzpicture}
}
\mapsto
\hbox{
\begin{tikzpicture}[baseline=(current  bounding  box.center)]

\node (cf) at (0,-0.5){$z \otimes_s y$};

\node (f) at (-0.4,1.2)[draw]{$f$};
\node (g) at (0.4,1.2)[draw]{$g$};

\node (tr) at (0,0.1) {$\bigtriangledown$};
\coordinate (trd) at (0,0);
\coordinate (truw) at (-0.1,00.2);
\coordinate (true) at (0.1,0.2);

\node (tr1) at (0,2.2) {$\bigtriangleup$};
\coordinate (trd1w) at (-0.1,2.12);
\coordinate (trd1e) at (0.1,2.12);
\coordinate (tru1) at (0,2.35);
\node (c1) at (0,3) {$z'\otimes_s y'$};

\begin{knot}[clip width=4]
\strand [thick] (cf) to (trd);
\strand [thick] (true) to [out=70, in=-90] (g);
\strand [thick] (g) to [out=90,in=-70] (trd1e);
\strand [thick] (tru1) to (c1);
\strand [thick] (truw) to [out=110,in=-90] (f);
\strand [thick] (f) to [out=90,in=-110] (trd1w);
\end{knot}
\end{tikzpicture}
}.
\end{equation}
\end{df}

\begin{lem}\label{STsymisfunct}
	The assignment from Definition \ref{STsymtensorfunctor} is functorial.
\end{lem}

\begin{proof}
Observe that we have for $f,f'$ and $g,g'$ morphisms in $\dcentcat{A}$:
$$
\hbox{
\begin{tikzpicture}[baseline=(current  bounding  box.center)]

\node (cf) at (0,-0.5){};

\node (f) at (-0.4,1.2)[draw,minimum height=20pt]{$f$};
\node (g) at (0.4,1.2)[draw,minimum height=20pt]{$g$};

\node (tr) at (0,0.1) {$\bigtriangledown$};
\coordinate (trd) at (0,0);
\coordinate (truw) at (-0.1,00.2);
\coordinate (true) at (0.1,0.2);

\node (tr1) at (0,2.2) {$\bigtriangleup$};
\coordinate (trd1w) at (-0.1,2.12);
\coordinate (trd1e) at (0.1,2.12);
\coordinate (tru1) at (0,2.35);

\node (f3) at (-0.4,4.2)[draw,minimum height=20pt]{$f'$};
\node (g3) at (0.4,4.2)[draw,minimum height=20pt]{$g'$};

\node (tr3) at (0,3.1) {$\bigtriangledown$};
\coordinate (trd3) at (0,3);
\coordinate (truw3) at (-0.1,3.2);
\coordinate (true3) at (0.1,3.2);

\node (tr13) at (0,5.2) {$\bigtriangleup$};
\coordinate (trd1w3) at (-0.1,5.12);
\coordinate (trd1e3) at (0.1,5.12);
\coordinate (tru13) at (0,5.35);
\node (c1) at (0,6) {};

\begin{knot}[clip width=4]
\strand [thick] (cf) to (trd);
\strand [thick] (true) to [out=70, in=-90] (g);
\strand [thick] (g) to [out=90,in=-70] (trd1e);
\strand [thick] (tru1) to (trd3);
\strand [thick] (truw) to [out=110,in=-90] (f);
\strand [thick] (f) to [out=90,in=-110] (trd1w);
\strand [thick] (true3) to [out=70, in=-90] (g3);
\strand [thick] (g3) to [out=90,in=-70] (trd1e3);
\strand [thick] (tru13) to (c1);
\strand [thick] (truw3) to [out=110,in=-90] (f3);
\strand [thick] (f3) to [out=90,in=-110] (trd1w3);
\end{knot}
\end{tikzpicture}
}
=
\hbox{
\begin{tikzpicture}[baseline=(current  bounding  box.center)]

\node (cf) at (0,-0.5){};

\node (f) at (-0.4,1.2)[draw,minimum height=20pt]{$f$};
\node (g) at (0.4,1.2)[draw,minimum height=20pt]{$g$};

\node (tr) at (0,0.1) {$\bigtriangledown$};
\coordinate (trd) at (0,0);
\coordinate (truw) at (-0.1,00.2);
\coordinate (true) at (0.1,0.2);

\node (f3) at (-0.4,4.2)[draw,minimum height=20pt]{$f'$};
\node (g3) at (0.4,4.2)[draw,minimum height=20pt]{$g'$};

\node (tr13) at (0,5.2) {$\bigtriangleup$};
\coordinate (trd1w3) at (-0.1,5.12);
\coordinate (trd1e3) at (0.1,5.12);
\coordinate (tru13) at (0,5.35);
\node (c1) at (0,6) {};

\coordinate (west) at (-0.7,2.7);
\coordinate (north) at (0,3);
\coordinate (east) at (0.7,2.7);
\coordinate (south) at (0, 2.4);

\begin{knot}[clip width=4]
\strand [blue, thick] (west) to [out=90,in=180] (north) to [out=0,in=90] (east) to [out=-90,in=0] (south) to [out=180,in=-90] (west);
\strand [thick] (cf) to (trd);
\strand [thick] (true) to [out=70, in=-90] (g);
\strand [thick] (g) to (g3);
\strand [thick] (truw) to [out=110,in=-90] (f);
\strand [thick] (f) to (f3);
\strand [thick] (g3) to [out=90,in=-70] (trd1e3);
\strand [thick] (tru13) to (c1);
\strand [thick] (f3) to [out=90,in=-110] (trd1w3);
\flipcrossings{2,3}
\end{knot}
\end{tikzpicture}
}
=
\hbox{
\begin{tikzpicture}[baseline=(current  bounding  box.center)]

\node (cf) at (0,-0.5){};

\node (f) at (-0.4,1.2)[draw,minimum height=20pt]{$f$};
\node (g) at (0.4,1.2)[draw,minimum height=20pt]{$g$};

\node (tr) at (0,0.1) {$\bigtriangledown$};
\coordinate (trd) at (0,0);
\coordinate (truw) at (-0.1,00.2);
\coordinate (true) at (0.1,0.2);

\node (f3) at (-0.4,2)[draw,minimum height=20pt]{$f'$};
\node (g3) at (0.4,2)[draw,minimum height=20pt]{$g'$};

\node (tr13) at (0,5.2) {$\bigtriangleup$};
\coordinate (trd1w3) at (-0.1,5.12);
\coordinate (trd1e3) at (0.1,5.12);
\coordinate (tru13) at (0,5.35);
\node (c1) at (0,6) {};

\coordinate (west) at (-0.7,3.7);
\coordinate (north) at (0,4);
\coordinate (east) at (0.7,3.7);
\coordinate (south) at (0, 3.4);

\begin{knot}[clip width=4]
\strand [blue, thick] (west) to [out=90,in=180] (north) to [out=0,in=90] (east) to [out=-90,in=0] (south) to [out=180,in=-90] (west);
\strand [thick] (cf) to (trd);
\strand [thick] (true) to [out=70, in=-90] (g);
\strand [thick] (g) to (g3);
\strand [thick] (truw) to [out=110,in=-90] (f);
\strand [thick] (f) to (f3);
\strand [thick] (g3) to [out=90,in=-70] (trd1e3);
\strand [thick] (tru13) to (c1);
\strand [thick] (f3) to [out=90,in=-110] (trd1w3);
\flipcrossings{2,3}
\end{knot}
\end{tikzpicture}
}
=
\hbox{
\begin{tikzpicture}[baseline=(current  bounding  box.center)]

\node (cf) at (0,-0.5){};

\node (f) at (-0.4,1.2)[draw,minimum height=20pt]{$f$};
\node (g) at (0.4,1.2)[draw,minimum height=20pt]{$g$};

\node (tr) at (0,0.1) {$\bigtriangledown$};
\coordinate (trd) at (0,0);
\coordinate (truw) at (-0.1,00.2);
\coordinate (true) at (0.1,0.2);

\node (tr1) at (0,3.2) {$\bigtriangleup$};
\coordinate (trd1w) at (-0.1,3.12);
\coordinate (trd1e) at (0.1,3.12);
\coordinate (tru1) at (0,3.35);

\node (f3) at (-0.4,2)[draw,minimum height=20pt]{$f'$};
\node (g3) at (0.4,2)[draw,minimum height=20pt]{$g'$};

\node (tr3) at (0,4.1) {$\bigtriangledown$};
\coordinate (trd3) at (0,4);
\coordinate (truw3) at (-0.1,4.2);
\coordinate (true3) at (0.1,4.2);

\node (tr13) at (0,5.2) {$\bigtriangleup$};
\coordinate (trd1w3) at (-0.1,5.12);
\coordinate (trd1e3) at (0.1,5.12);
\coordinate (tru13) at (0,5.35);
\node (c1) at (0,6) {};

\begin{knot}[clip width=4]
\strand [thick] (cf) to (trd);
\strand [thick] (true) to [out=70, in=-90] (g);
\strand [thick] (g) to (g3);
\strand [thick] (g3) to [out=90,in=-70] (trd1e);
\strand [thick] (tru1) to (trd3);
\strand [thick] (truw) to [out=110,in=-90] (f);
\strand [thick] (f) to (f3);
\strand [thick] (f3) to [out=90,in=-110] (trd1w);
\strand [thick] (true3) to [out=70, in=-70] (trd1e3);
\strand [thick] (tru13) to (c1);
\strand [thick] (truw3) to [out=110,in=-110] (trd1w3);
\end{knot}
\end{tikzpicture}
}
=
\hbox{
\begin{tikzpicture}[baseline=(current  bounding  box.center)]

\node (cf) at (0,-0.5){};

\node (f) at (-0.4,1.2)[draw,minimum height=20pt]{$f$};
\node (g) at (0.4,1.2)[draw,minimum height=20pt]{$g$};

\node (tr) at (0,0.1) {$\bigtriangledown$};
\coordinate (trd) at (0,0);
\coordinate (truw) at (-0.1,00.2);
\coordinate (true) at (0.1,0.2);

\node (tr1) at (0,3.2) {$\bigtriangleup$};
\coordinate (trd1w) at (-0.1,3.12);
\coordinate (trd1e) at (0.1,3.12);
\coordinate (tru1) at (0,3.35);

\node (f3) at (-0.4,2)[draw,minimum height=20pt]{$f'$};
\node (g3) at (0.4,2)[draw,minimum height=20pt]{$g'$};

\coordinate (trd3) at (0,4);

\begin{knot}[clip width=4]
\strand [thick] (cf) to (trd);
\strand [thick] (true) to [out=70, in=-90] (g);
\strand [thick] (g) to (g3);
\strand [thick] (g3) to [out=90,in=-70] (trd1e);
\strand [thick] (tru1) to (trd3);
\strand [thick] (truw) to [out=110,in=-90] (f);
\strand [thick] (f) to (f3);
\strand [thick] (f3) to [out=90,in=-110] (trd1w);
\end{knot}
\end{tikzpicture}
},
$$
where in the second step we used naturality of the braiding in $\dcentcat{A}$. 
\end{proof}

\subsubsection{The Symmetric Tensor Product as Symmetric Monoidal Structure}

Collecting the results from the previous sections, we have shown that:

\begin{thm}\label{STsymtensmainthrm}
	$(\dcentcat{A},\otimes_s,\mathbb{I}_s)$ is a symmetric monoidal category.
\end{thm}

\begin{proof}
	We know that $\otimes_s$ is a functor, by Lemma \ref{STsymisfunct}. To see that $\otimes_s$ is weakly associative, note that we have shown that the maps induced from the associators of $\cat{A}$ give isomorphisms between the two possibilities for the triple product (Lemma \ref{STasso}). As the associators for $\cat{A}$ satisfy the pentagon equations, so will the induced maps. Furthermore, an argument analogous to the proof of functoriality will establish that these isomorphisms are natural. 
	
	For weak unitality, observe that, in Lemmas \ref{STunitisos} and \ref{STtriangleeq}, we have established $\mathbb{I}_s$ as the unit for $\otimes_s$.
	
	To establish symmetry of $\otimes_s$, we recall that we have shown that the symmetry in $\cat{A}$ induces isomorphisms between the swapped orders of taking the symmetric tensor product (Lemma \ref{STsymlemma}). These induced morphisms give a natural transformation that satisfies the hexagon equations.  
\end{proof}

\subsection{Basic Properties of the Symmetric Tensor Product}

\subsubsection{The forgeful functor is lax monoidal}
The forgetful functor $\Phi:\dcentcat{A}\rar \cat{A}$ is a monoidal functor for $\otimes_c$, but not braided with respect to the braiding for $\otimes_c$ and the symmetry of $\cat{A}$. The goal of this section is to show that $\Phi$ is braided (so in this case symmetric) for the symmetry of $\otimes_s$ and $\cat{A}$, but not strongly monoidal. Instead, $\Phi$ turns out to be lax monoidal, we recall that lax monoidality is defined as follows.

\begin{df}\label{STlaxmondef}
	A \emph{lax monoidal functor} from a monoidal category $\cat{Z}$ to a monoidal category $\cat{Y}$ is a functor $F\colon \cat{Z}\rar \cat{Y}$, together with a natural transformation:
	$$
	\mu\colon F(-)\otimes F(-)\Rightarrow F(-\otimes-),
	$$
	and a morphism
	$$
	\mu^0\colon \mathbb{I}_\cat{Y}\rar F(\mathbb{I}_\cat{Y}),
	$$
	that satisfy the compatibility conditions with the associators $\alpha_\cat{Z}$ and $\alpha_\cat{Y}$:
	\begin{center}
		\begin{tikzcd}
			F(z)(F(z')F(z'')) \arrow[r,"\mu"] \arrow[d,"\alpha_\cat{A}"] 	&	F(c)F(z'z'') \arrow[d,"\mu"] \\
			(F(c)F(z'))F(z'') \arrow[d,"\mu"]								&	F(z(z'z'')) \arrow[dl,"F(\alpha_\cat{Z})"]\\
			F((zz')z'') &,
		\end{tikzcd}
	\end{center}
	for all $z,z',z''\in\cat{Z}$, and compatibility with the unitors:
	\begin{center}
		\begin{tikzcd}
			\mathbb{I}_{\cat{A}}F(z)\arrow[r,"\mu_0"]\arrow[d,"\lambda_\cat{A}"]	&F(\mathbb{I}_\cat{Z})F(z) \arrow[d,"\mu_{\mathbb{I},z}"]\\
			F(z)& F(\mathbb{I}_{\cat{Z}}z)\arrow[l,"F(\lambda_\cat{Z})"],			
		\end{tikzcd}
	\end{center}
	and a similar condition for the right unitors.
	
	Now suppose $\cat{Z}$ and $\cat{Y}$ are braided with braidings (or symmetries) $\beta^\cat{Z}$ and $\beta^\cat{Y}$, respectively. Then $F$ is called \emph{braided} (or \emph{symmetric}) if the following diagram
	\begin{center}
		\begin{tikzcd}
			F(z) F(z') \arrow[r,"\beta^\cat{Y}"] \arrow[d,"\mu"] 	&	 F(z')F(z) \arrow[d,"\mu"]\\
			F(zz') \arrow[r,"F(\beta^\cat{Z})"] 					& F(z'z)
		\end{tikzcd}
	\end{center}
	commutes for all $z,z'\in\cat{Z}$.
\end{df}

\begin{prop}\label{STforgetislaxmon}
	The forgetful functor $\Phi\colon (\dcentcat{A},\otimes_s)\rar(\cat{A},\otimes_\cat{A})$ is symmetric lax monoidal.
\end{prop}
\begin{proof}
	Let $z=(a,\beta)$ and $z'=(a',\beta')$ be objects of $\dcentcat{A}$. Recall (Definition \ref{STsymtensobj}) that the symmetric tensor product $z\otimes_s z'$ has as underlying object $\Phi(z\otimes_s z')$ in $\cat{A}$ the object $\Phi(z\otimes_\Pi z')$. We have to provide a natural transformation $\lambda$ from $\Phi(-)\Phi(-)$ to $\Phi(-\otimes_s -)$, so a map:
	$$
	\mu_{z,z'}\colon 	a a' \rar \Phi(z\otimes_s z')= \Phi(z\otimes_\Pi z').
	$$
	We claim that the image under $\Phi$ of the projection $p_{z,z'}$ associated to $\Pi_{z,z'}$ will work. First of all, the forgetful functor is strongly monoidal for $\otimes_c$, so the image of $\mu_{z,z'}:=\Phi(p_{z,z'})$ is certainly a map between $aa'$ and $\Phi(z\otimes_\Pi z')$. As the associators are defined using the projection $p_{z,z'}$, this map is automatically compatible with the associators, c.f. the first diagram in Definition \ref{STlaxmondef}.
	
	Next, we need to provide a map
	$$
	\mu_0\colon  \mathbb{I}_\cat{A} \rar \Phi(\mathbb{I}_s)= \bigoplus_{i\in\cat{O}(\cat{A})} ii^*.
	$$
	Recall that $\mathbb{I}_\cat{A}$ also acts as the unit for $\otimes_c$, to emphasise this we drop the subscript $\cat{A}$. We take $\mu_0$ to be the global dimension $D$ times the inclusion $I$ of $\mathbb{I}\cong \mathbb{I}\mathbb{I}$ into $\mathbb{I}_s$. Tracing trough the second compatibility diagram in Definition \ref{STlaxmondef}, the composite along the right hand side of the diagram computes as:
	$$
	\hbox{
		\begin{tikzpicture}[baseline=(current  bounding  box.center)]
		
		\node (c) at (0,0){$z$};
		
		\node (i) at (-0.3,0.8)[draw]{$I$};
		
		\node (tr1) at (0,1.5) {$\bigtriangleup$};
		\coordinate (trd1ww) at (-0.12,1.42);
		\coordinate (trd1w) at (-0.08,1.42);
		\coordinate (trd1e) at (0.1,1.42);
		\coordinate (tru1) at (0,1.65);

		\node (tr) at (0,2.1) {$\bigtriangledown$};
		\coordinate (trd) at (0,2);
		\coordinate (truw) at (-0.1,2.2);
		\coordinate (true) at (0.1,2.2);
		\coordinate (trum) at (0,2.2);
		
		\coordinate (ctrl) at (0.175,3.3);
		
		\coordinate (co) at (0,4);
		
		\begin{knot}[clip width=4,clip radius = 3pt]
		\strand [thick] (c) to [out=90,in=-90] (trd1e);
		\strand [thick,blue] (i.110) to [out=90,in=-90] (trd1ww);
		\strand [thick,blue] (i.70) to [out=90,in=-90] (trd1w);
		\strand [thick] (tru1) to (trd);
		\strand [thick,blue] (truw) to [out=90,in=120] (ctrl) to [out=-60,in=90] (trum);
		\strand [thick] (true) to [out=70,in=-90] (co);
		\flipcrossings{2}
		\end{knot}
		\end{tikzpicture}
	}
	=
	\hbox{
		\begin{tikzpicture}[baseline=(current  bounding  box.center)]
		
		\node (c) at (0,0){$z$};
		
		\node (i) at (-0.3,0.8)[draw]{$I$};

		\coordinate (trd1ww) at (-0.12,1.42);
		\coordinate (trd1w) at (-0.08,1.42);
		\coordinate (trd1e) at (0.1,1.42);

		\coordinate (truw) at (-0.1,2.2);
		\coordinate (true) at (0.1,2.2);
		\coordinate (trum) at (-0.06,2.2);
		
		\coordinate (ctrl) at (0.175,3.3);
		
		\coordinate (co) at (0,4);
		
		\begin{knot}[clip width=4,clip radius = 3pt]
		\strand [thick] (c) to [out=80,in=-90] (true) to [out=90,in=-90] (co);
		\strand [thick,blue] (i.110) to [out=90,in=-90] (trd1ww) to [out=90,in=-90] (truw) to [out=90,in=120] (ctrl) to [out=-60,in=90] (trum) to [out=-90,in=90] (trd1w) to [out=-90,in=90] (i.70);
		\flipcrossings{1}
		\end{knot}
		\end{tikzpicture}
	}	=
	\hbox{
		\begin{tikzpicture}[baseline=(current  bounding  box.center)]
		
		\node (c) at (0,0){$z$};
		
		\coordinate (co) at (0,4);
		
		\begin{knot}[clip width=4]
		\strand [thick] (c) to (co);
		\end{knot}
		\end{tikzpicture}
	},
	$$
	where the first step uses Equation \eqref{STinclprojprops}, snapping (the resulting ring on the top comes off immediately), and the second step comes from the observation that the unit braids trivially with all other objects.
	
	Finally, we have to show that the symmetry for $\otimes_s$ is sent to the symmetry in $\cat{A}$, but this follows directly from its definition in Lemma \ref{STsymlemma}.
\end{proof}

\subsubsection{Basic Computations with the Symmetric Tensor Product}

In this section we will do some basic computations with the symmetric tensor product that tell us how the symmetric tensor product behaves with respect to the subcategory $\cat{A}\subset\dcentcat{A}$. We start with:

\begin{lem}
	Let $a$ and $a'$ be objects of $\cat{A}\subset\dcentcat{A}$. Then
	$$
	a \otimes_s a' = a\otimes_c a'.
	$$
\end{lem}
\begin{proof}
	The object $z \otimes_s z'$ is defined in terms of the suboject associated to $\Pi_{z,z'}:a\otimes_c a'\rar a\otimes_c a'$. As the objects of $\cat{A}$ are transparent to each other, we see that $\Pi_{z,z'}=\id_{a\otimes_c a'}$, and the result follows.
\end{proof}

The subcategory $\cat{A}$ is $\otimes_s$-orthogonal to the rest of $\dcentcat{A}$, in the sense that the symmetric tensor product between objects of $\cat{A}\subset\dcentcat{A}$ and objects outside this subcategory is zero. To prove this fact, we need the following well-known lemma, that we prove for convenience of the reader:

\begin{lem}\label{STcentofA}
	The centraliser $\cat{Z}_2(\cat{A},\dcentcat{A})$ (see Definition \ref{STtransparentdef}) of $\cat{A}$ in $\dcentcat{A}$ is $\cat{A}$.  
\end{lem}
\begin{proof}
	Let $z=(a,\beta)\in \cat{Z}_2(\cat{A},\dcentcat{A})$. We want to show that $z\in \cat{A}$, for this it suffices to show that $\beta=s_{-,a}$, ie. that for all $a'\in\cat{A}$ we have $\beta_{a'}=s_{a',a}$. But from the condition (Definition \ref{STtransparentdef}) that that $z$ is transparent to $(a',s_{-,a'})\in \cat{A}\subset\dcentcat{A}$ we have that $\beta_{a'}= s_{a,a'}^{-1}=s_{a',a}$.
\end{proof}

\begin{prop}
	Let $a\in \cat{A}$ and let $z\in \dcentcat{A}$ be a simple object not in $\cat{A}$. Then:
	$$
	a \otimes_s z = 0,
	$$
	where $0$ denotes the zero object of $\dcentcat{A}$.
\end{prop}

\begin{proof}
	Recall that $a\otimes_s z$ is defined using the subobject associated to the idempotent $\Pi_{a,z}$ from Lemma \ref{STidempotent}, and is therefore zero if and only if the idempotent is zero on $a\otimes_c z$. Using that the objects of $\cat{A}$ are transparent with respect to each other we see that the idempotent computes as:
	$$
	\Pi_{a,z}=
	\hbox{
		\begin{tikzpicture}[baseline=(current  bounding  box.center)]
		\coordinate (west) at (0,0);
		\coordinate (north) at (0.5,0.4);
		\coordinate (east) at (1,0);
		\coordinate (south) at (0.5,-0.4);
		\node (a) at (-0.5,-1.2) {$a$};
		\node (b) at (0.5,-1.2) {$z$};
		\coordinate (ao) at (-0.5,1);
		\coordinate (bo) at (0.5,1);
		
		\begin{knot}[clip width=4]
		\strand [blue, thick] (west)
		to [out=90,in=-180] (north)
		to [out=0,in=90] (east)
		to [out=-90,in=0] (south)
		to [out=-180,in=-90] (west);
		\strand [thick] (a) to (ao);
		\strand [thick] (b) to (bo);
		\flipcrossings{1,4}
		\end{knot}
		\end{tikzpicture}
	},
	$$
	which is zero if endomorphism of the simple $z$ defined by the right part of the diagram is zero. This in turn happens if and only if the trace of this endomorphism is zero. Its trace is by definition:
	$$
	\sum_{i\in\cat{O}(\cat{A})} \frac{d_i}{D} S(z,i),
	$$
	where $S(z,i)$ is the S-matrix entry (see \cite{Mueger2002}) for $z$ and $i$. By \cite[Lemma 2.13]{Mueger2002}, this trace computes as:
	$$
	\sum_{i\in\cat{O}(\cat{A})} \frac{d_i}{D} S(z,i)=d_{z'} \chi_{\cat{Z}_2(\cat{A},\dcentcat{A})}(z).
	$$
	Here $\chi_{\cat{Z}_2(\cat{A},\dcentcat{A})}$ denotes the characteristic function on the objects of $\cat{Z}_2(\cat{A},\dcentcat{A})$. The centraliser of $\cat{A}$ in its Drinfeld centre is $\cat{A}$ by Lemma \ref{STcentofA}, so all in all we see that $\Pi_{a,z}=0$ if $z$ is not in $\cat{A}$. Hence $a\otimes_s z$ is zero for all simple $z$ not in $\cat{A}$.
\end{proof}

These two results combine to give:

\begin{prop}\label{STsymtenswitha}
	Let $a\in \cat{A}$ and $z \in \dcentcat{A}$. Denote by $z_{\cat{A}}$ the maximal summand of $z$ that is an object of $\cat{A}$. Then:
	$$
	a \otimes_s z = a \otimes_c z_\cat{A}.
	$$
\end{prop}

Note that this also implies, by associativity and symmetry of the symmetric tensor product, that $z\otimes_s z'$ is never an object of $\cat{A}$ if $z$ and $z'$ have no summands in $\cat{A}$.

We can also prove the following relationship between the symmetric unit and the braided tensor product $\otimes_c$ between objects of $a$ and objects of $\dcentcat{A}$.
\begin{lem}\label{STconvvssymunit}
	Let $a\in\cat{A}$ and $z\in \dcentcat{A}$. Then:
	$$
	a \otimes_c z \cong (a\otimes_c\mathbb{I}_s)\otimes_s z.
	$$
\end{lem}

\begin{proof}
	The object $(a \otimes_c \mathbb{I}_s)\otimes_s z$ is computed in terms of the idempotent $\Pi_{a\otimes_c \mathbb{I}_s,z}$. As $a$ is transparent, we have that:
	$$
	\Pi_{a\otimes_c \mathbb{I}_s,z}= \id_a \otimes_c \Pi_{\mathbb{I}_s,z}=\id_a \otimes_c \id_z,
	$$
	where in the second step we used that $\mathbb{I}_s$ is the unit for $\otimes_s$. The result now follows.
\end{proof}

\section{The Symmetric Tensor Product under Tannaka Duality}\label{STtannacases}
Any symmetric fusion category is, by Tannaka Duality (Theorem \ref{STtannathm}), equivalent to the representation category of a finite (super-)group. Furthermore, as discussed in Section \ref{STdcofrepcat}, the Drinfeld centre of such a representation category can be viewed as the category of equivariant vector bundles over (the underlying group of) this (super-)group (Definition \ref{STGeqvb}). This category admits two obvious tensor products, the convolution tensor product (Definition \ref{STconvtensonvb}) and the fibrewise tensor product (Definition \ref{STfibrewisetensorproductdef}). 

The goal of this section is to first show that for $\cat{A}$ Tannakian (Definition \ref{STfibrefunctorandtannadef}), the symmetric tensor product on $\dcentcat{A}$ translates to the fibrewise tensor product when viewing the Drinfeld centre as equivariant vector bundles. After this, we will examine what the symmetric tensor product becomes when the symmetric fusion category is super-Tannakian. We will see that in this case, the symmetric tensor product translates to a twisted version of the fibrewise tensor product that takes into account the super-group structure. 

\subsection{Tannaka Duality for Symmetric Fusion Categories}\label{STtannaprelimsect}
A famous result by Deligne \cite{Deligne1990,Deligne2002} states that every symmetric fusion category over a field of characteristic zero is the representation category of a (super-)group. Before we can state the theorem, we need some definitions.

\begin{df}\label{STfibrefunctorandtannadef}
	Let $\cat{C}$ be a braided fusion category. A braided functor $\cat{C}\rar \Vect$ (or $\cat{C}\rar \svec$) is called a \emph{(super-)fibre functor}. A braided fusion category $\cat{C}$ is called \emph{(super-)Tannakian} if $\cat{C}$ admits a (super-)fibre functor.
\end{df}

Before we state Deligne's Theorem, we recall some basic facts about super-groups\footnote{This definition of super-groups is different from viewing super-groups as group objects in the category of super-manifolds. The definition here is the one used in the context of fusion categories, see for example \cite{Bruillard2016}.}:
\begin{df}\label{STsupergroup}
	A \emph{super-group} $(G,\omega)$ is a group $G$ together with a choice of central element of order two $\omega$. A \emph{representation of a super-group} is a super-vector space $V$ and a homomorphism $G\rar\tn{Aut}_{\svec}(V)$ that takes the element $\omega$ to the grading involution on $V$. The fusion category of such representations $\tn{Rep}(G,\omega)$ is symmetric, with symmetry inherited from $\svec$. Observe that, as $\omega$ is central, an irreducible representation is homogeneous, and $\Rep(G,\omega)$ splits (as a linear category) as the sum of the subcategories of even representations (where $\omega$ acts as the identity) and odd representations (where $\omega$ acts as minus the identity).
\end{df}

\begin{thm}[\cite{Deligne1990,Deligne2002}]\label{STtannathm}
	Let $\cat{A}$ be a symmetric fusion category over a field of characteristic zero. Then $\cat{A}$ admits either a fibre functor or a super-fibre functor, so is either Tannakian or super-Tannakian (Definition \ref{STfibrefunctorandtannadef}). Furthermore, the category $\cat{A}$ is in the Tannakian (or super-Tannakian) case equivalent as a symmetric fusion category to the category of representations of the (super)-group of monoidal natural automorphisms of the (super-)fibre functor (where the grading involution natural isomorphism is taken as the central order 2 element of the supergroup).
\end{thm}

\subsection{The Drinfeld Centre of the Representation Category of a Finite Group}\label{STdcofrepcat}
As discussed in Section \ref{STtannaprelimsect}, every symmetric fusion category $\cat{A}$ is a representation category of a finite group or super-group. It turns out that the Drinfeld centre of a representation category of a finite group $G$ has the interesting feature that it is equivalent (as braided monoidal category) to the Drinfeld centre of the category of $G$-graded vector spaces, as we discuss in this section. We will first discuss the case of $G$ being an ordinary finite group, then we move on to the super-group case.

\subsubsection{The Drinfeld Centre of a Tannakian Category}\label{STdctannacase}
It is well-known (\cite[Chapter 3.2]{Bakalov2001a}) that when $\cat{A}=\tn{Rep}(G)$, there is an equivalence:
\begin{equation}\label{STcentisbund}
\mathcal{E}\colon \dcentcat{A}\xrightarrow{\cong}\Vect_G[G],
\end{equation}
between the Drinfeld centre and the category of equivariant vector bundles over $G$. The latter category is defined as follows:
\begin{df}\label{STGeqvb}
	A \emph{$G$-equivariant vector bundle $V$ on $G$} is a collection of vector spaces $V_g$ for $g\in G$, together with for each $h\in G$ a family of isomorphisms 
	$$
	\rho_h\colon V_g\xrightarrow{\cong} V_{h^{-1}gh},
	$$
	indexed by $g$, and such that $\rho_{h'}\rho_h=\rho_{h'h}$. The vector space $V_g$ will be called \emph{the fibre over $g$}, and the isomorphisms $\rho$ the \emph{action data}.

	The \emph{category $\Vect_G[G]$ of $G$-equivariant vector bundles on $G$} is the category with objects $G$-equivariant bundles over $G$, and morphisms fibrewise linear maps that commute with the $\rho_h$.
\end{df}

\begin{df}\label{STconvtensonvb}
	The \emph{convolution tensor product} $V\otimes W$ of two equivariant vector bundles $V,W$ over $G$ is the equivariant vector bundle with fibres
	$$
	(V\otimes W)_g=\bigoplus_{g_1g_2=g} V_{g_1}\otimes W_{g_2},
	$$
	and action data $\rho_g=\oplus_{g_1g_2=g}\rho^V_{g_1}\otimes \rho^W_{g_2}$.
\end{df}

Furthermore, there is a braiding:

\begin{df}\label{STequivvbbraiding}
	The \emph{braiding isomorphism}
	$$
	\beta_{V,W}: V\otimes W\rar W\otimes V
	$$
	for $V,W\in \Vect_G[G]$, is given by using for each $g_1g_2=g$
	$$
	V_{g_1}\otimes W_{g_2} \xrightarrow{\tau\circ(\rho_{g_2}\otimes \id)} W_{g_2}\otimes V_{g^{-1}_2g_1g_2},
	$$
	where $\tau$ is the switch map of vector spaces, and summing this to a fibrewise map.
\end{df}

This makes $\Vect_G[G]$ into a braided fusion category. It is in fact a modular tensor category, with simples supported by conjugacy classes of $G$. Note that, as the neutral element $e$ is stabilised under conjugation by the whole group, the subcategory of vector bundles supported by the conjugacy class $[e]$ is the representation category of $G$. The inclusion functor from Equation \eqref{STinclusionbrtodrinfeld} is in this model for the Drinfeld centre the functor
\begin{align}\begin{split}
\label{STexplicitannainclusion}
\mathcal{I}\colon \Rep(G) \rar & \Vect_{G}[G]\\
(V,\rho)\mapsto &(\{V_g=\begin{cases}
V \mbox{ for g=e},\\
0 \mbox{ otherwise.}
\end{cases}\}_{g\in G},\rho)
\end{split}
\end{align}
that views a representation of $G$ as a vector bundle over $G$ supported by $[e]$.

\begin{df}\label{STequivvbforget}
	The \emph{forgetful functor from $\Vect_G[G]$ to $\tn{Rep}(G)$} is given by
	\begin{align*}
	\Phi:	 \Vect_G[G] \rar & \tn{Rep}(G)\\
	V=(\{ V_g\},\rho) \mapsto& (\bigoplus_{g\in G} V_g,\rho),
	\end{align*}
	where the action data $\rho$ acts on the direct sum in the obvious way.
\end{df}

Using the forgetful functor, the inverse to the equivalence $\mathcal{E}$ from Equation \eqref{STcentisbund} between $\cat{Z}(\tn{Rep}(G))$ and $\Vect_G[G]$ is in one direction given by taking $V=\{V_g\}$ and mapping it to $(\Phi(V),\beta_{V,-})$. 

%
%
%
%
%
%

\subsubsection{The Drinfeld Centre of a Super-Tannakian Category}\label{STsupertannadc}
We will now discuss the Drinfeld centre of the representation category of a finite supergroup $(G,\omega)$. We will denote the underlying finite group by $G$. We start with the following observation:

\begin{lem}
	For any finite supergroup $(G,\omega)$, there is an equivalence $$\cat{Z}(\Rep(G,\omega))\cong \cat{Z}(\Rep(G))$$ of braided monoidal categories.
\end{lem}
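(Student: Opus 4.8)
The plan is to show that $\Rep(G,\omega)$ and $\Rep(G)$ are equivalent as \emph{monoidal} categories, and then to use the fact that the Drinfeld centre (Definition \ref{drinfeldcenterdef}) is built entirely from the monoidal structure of a category, with no reference to any pre-existing braiding. A monoidal equivalence therefore induces a braided monoidal equivalence of Drinfeld centres, which is exactly the claim.

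First I would construct the monoidal equivalence $F\colon \Rep(G,\omega)\rar \Rep(G)$. An object of $\Rep(G,\omega)$ is a super-vector space $V$ with a homomorphism $\rho\colon G\rar \tn{Aut}_{\svec}(V)$ sending $\omega$ to the grading involution; the functor $F$ forgets the $\mathbb{Z}/2$-grading and remembers only the underlying vector space with its $G$-action $\rho$. This is an equivalence: since $\omega$ is central of order two, for any $G$-representation $W$ the operator $\rho(\omega)$ is a $G$-equivariant involution, and its $\pm1$-eigenspace decomposition recovers a unique grading on $W$ making $\omega$ act as the grading involution, which gives a quasi-inverse. The functor $F$ is (strictly) monoidal because the underlying vector space of the graded tensor product in $\svec$ is the ordinary tensor product, the diagonal $G$-action agrees in both categories, and the associators are the standard ones on either side.

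Having established the monoidal equivalence, I would then recall that $\cat{Z}(-)$ sends a monoidal category to the braided monoidal category of pairs $(m,\beta)$ with $\beta$ a half-braiding (Definition \ref{drinfeldcenterdef}); this construction is functorial in monoidal functors, sending a monoidal equivalence $F$ to the braided monoidal functor $(m,\beta)\mapsto (F(m),\beta')$, where $\beta'$ is the half-braiding transported along the monoidal structure isomorphisms of $F$. Applying this to $F$ yields the desired braided monoidal equivalence $\cat{Z}(\Rep(G,\omega))\cong \cat{Z}(\Rep(G))$.

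The main point to be careful about is that, although $\Rep(G,\omega)$ and $\Rep(G)$ have genuinely different symmetries --- the former carries the Koszul sign $(-1)^{|v||w|}$ in its braiding, inherited from $\svec$, while the latter does not --- this difference is invisible to the Drinfeld centre, precisely because half-braidings are extra data rather than something extracted from an ambient braiding. Thus the only real work is verifying that $F$ respects the tensor product and associators (as opposed to the symmetries), which is routine; once this is in hand the equivalence of centres is immediate from the functoriality of $\cat{Z}$.
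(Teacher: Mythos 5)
Your proposal is correct and is essentially the paper's argument: the paper's proof consists of the single observation that $\Rep(G,\omega)$ and $\Rep(G)$ are monoidally equivalent, which you spell out in full (forgetting the grading, recovering it from the eigenspaces of $\rho(\omega)$) together with the standard fact that the Drinfeld centre depends only on the monoidal structure. Your remark that the differing symmetries are invisible to the centre is exactly the point the paper is implicitly relying on.
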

\begin{proof}
	This follows directly from the fact that $\Rep(G,\omega)$ and $\Rep(G)$ are equivalent as monoidal categories and that the Drinfeld centre construction only uses the monoidal structure.
\end{proof}

So, using the equivalence from Equation \eqref{STcentisbund}:

\begin{cor}\label{STsupercentisbund}
	For any finite supergroup $(G,\omega)$, there is an equivalence $$\cat{Z}(\Rep(G,\omega))\cong\Vect_G[G]$$ of braided monoidal categories.	
\end{cor}

In the super-case, the inclusion functor from Equation \eqref{STinclusionbrtodrinfeld} takes a more complicated form than that from Equation \eqref{STexplicitannainclusion}. 

\begin{prop}\label{STsupertannainclfunctor}
	Under the equivalence of $\cat{Z}(\Rep(G,\omega))$ with $\Vect_G[G]$, the inclusion functor
	$$
	\Rep(G,\omega)\hookrightarrow\Vect_G[G]
	$$ 
	from Equation \eqref{STinclusionbrtodrinfeld} is given by sending even and odd representations to vector bundles supported by $[e]$ and $[\omega]$, respectively.
\end{prop}
\begin{proof}
	We need, for an object $(V,\rho)\in \Rep(G,\omega)$ (a super-vector space together with a representation), to examine what object $((V,\rho),s_{-,V})$ is mapped to under the equivalence from Corollary \ref{STsupercentisbund}. It is enough to do this for a homogeneous object of parity $|V|\in \{0,1\}$, as $\Rep(G,\omega)$ is a direct sum of its subcategories of even and odd objects. Assuming $V$ is homogeneous, and $(W,\rho')$ is an object of $\Rep(G,\omega)$, the half-braiding $s_{W,V}$ is given by $(-1)^{|W||V|}\tau_{W,V}$, where $\tau_{W,V}$ is the switch in vector spaces (recall that a super-vector space is just a vector space with a grading, and that the tensor product is just the vector space tensor product). Observe that we can rewrite the factor $(-1)^{|W||V|}$ as $\rho'(\omega^{|V|})$. Comparing this to Definition \ref{STequivvbbraiding}, we see that this means that $((V,\rho),s_{-,V})$ is the bundle with fibre $(V,\rho)$ supported on $[\omega^{|V|}]$. 
\end{proof}

With the choice of $\omega\in G$, we also get a parity for objects of $\Vect_{G}[G]$. 
\begin{lem}\label{SThomogeneitysupertanna}
	Let $V$ be a simple object of $\Vect_G[G]$, then $\omega$ acts by either $\id_{V}$ or $-\id_{V}$.
\end{lem}

\begin{proof}
	The simple objects in $\Vect_G[G]$ are supported by conjugacy classes, and indecomposable as bundles over these. As $\omega$ is central, it has to act by the same linear map on each fibre. As it is an element of order two, this map has to be a block sum of $\pm \id$, and as $\omega$ is central and $V$ is assumed to be indecomposable there can only be one block.
\end{proof}

With this lemma in hand, we can simply define:
\begin{df}\label{STevenodddefindc}
	Let $V$ be a simple object in $\Vect_G[G]$, then $c$ is called \emph{even} (or \emph{odd}) if $\omega$ acts as $\id$ (or $-\id$).
\end{df}

The forgetful functor on $\cat{Z}(\Rep(G,\omega))\cong \Vect_{G}[G]$ is again the functor to $\Rep(G,\omega)$ that takes the direct sum of the fibres. The above definition ensures that this forgetful functor preserves parity.

\subsection{The Symmetric Tensor Product under Tannakia Duality}

We will now take a look at what the symmetric tensor product $\otimes_s$ becomes from the point of view of Tannaka duality. The main results of this section are Theorems \ref{STsymprodtannafibthm} and \ref{STsymtenssupertannacase}.

\subsubsection{Tannakian Case}\label{STtannakacase}

In this section we will examine what the symmetric tensor product on $\dcentcat{A}$ gives in the case where $\cat{A}=\tn{Rep}(G)$, where $G$ is a finite group. We will show that: 

\begin{thm}\label{STsymprodtannafibthm}
Let $G$ be a finite group. Then the equivalence $\mathcal{E}$ from Equation \eqref{STcentisbund} between $(\cat{Z}(\tn{Rep}(G)),\otimes_s)$ and $(\Vect_G[G],\otimes_f)$ is a symmetric monoidal equivalence. Here $\otimes_f$ denotes the fibrewise tensor product from Definition \ref{STfibrewisetensorproductdef}.
\end{thm}

The proof of this theorem will take up the rest of this section. We start by giving the definition of the fibrewise tensor product.

\begin{df}\label{STfibrewisetensorproductdef}
	The \emph{fibrewise tensor product on $\Vect_G[G]$} is given by
	$$
	(V\otimes_f W)_g= V_g\otimes W_g,
	$$
	with $G$-action $\rho_V\otimes\rho_W$. 
\end{df}
This tensor product is clearly symmetric with symmetry given fibrewise by the switch map of vector spaces.

We will now examine what the idempotent $\Pi_{z,z'}$ looks like in $\Vect_G[G]$. In particular, we will establish the following:
\begin{lem}\label{STcomputepitanna}
Let $V,W\in\Vect_G[G]$ then the idempotent $\Pi_{V,W}:V\otimes_c W \rar V\otimes_c W$ is given by
$$
\Pi_{V,W}|_{V_{g_1}\otimes W_{g_2}}=\begin{cases}\id &\tn{ for }g_1=g_2 \\ 0&\tn{ otherwise.} \end{cases}
$$
\end{lem}

\begin{proof}
By definition, $\Pi_{V,W}$ is given by
$$
\sum_{i\in \tn{IrRep(G)}} \frac{d_i }{D}
\hbox{
\begin{tikzpicture}[baseline=(current  bounding  box.center)]

\coordinate (west) at (-1,0.4);
\coordinate (north) at (0,1.1);
\coordinate (east) at (1,0.4);
\coordinate (south) at (0,-0.3);

\node (a) at (-0.5,-1.2) {$V$};
\node (b) at (0.5,-1.2) {$W$};

\node (i) at (-1.250, 0.4){$i$};
\node (id) at (1.25,0.4){$i^*$};

\coordinate (ao) at (-0.5,2);
\coordinate (bo) at (0.5,2);

\begin{knot}[clip width=4]
\strand [thick] (west)
to [out=90,in=-180] (north)
to [out=0,in=90] (east)
to [out=-90,in=0] (south)
to [out=-180,in=-90] (west);
\strand [thick] (a) to (ao);
\strand [thick] (b) to (bo);
\flipcrossings{1,4}
\end{knot}
\end{tikzpicture}
},
$$
where we put the label $i^*$ to emphasise the object going up is $i^*$. Recall, from Section \ref{STdctannacase}, that we are viewing $i\in \Rep(G)$ as an object in $\Vect_G[G]$ by regarding it as a vector bundle supported by $[e]$. The convolution tensor product (Definition \ref{STconvtensonvb}) between any bundle $E$ and a bundle $F=F_e$ supported by $[e]$ has fibres given by 
$$
(E\otimes F)_{g}=E_g\otimes F_e.
$$

We claim that $\Pi_{V,W}$ acts as a sum of endomorphisms of the summands $V_{g_1}\otimes W_{g_2}$ of the fibres over $g=g_1g_2$ of $V\otimes_c W$. The braidings on the $V$ and $W$ strands with $i$ and $i^*$ will individually fibrewise be automorphisms of $V_{g_1}\otimes i$ and $ i^*\otimes W_{g_2}$. Precomposing with co-evaluation and postcomposing with evaluation for $i$ combines these to automorphisms of $V_{g_1}\otimes W_{g_2}$, for each $i$ in the sum. This means the idempotent will be a direct sum of maps
$$
\Pi_{V_{g_1},W_{g_2}}:V_{g_1}\otimes W_{g_2}\rar V_{g_1}\otimes W_{g_2}, 
$$
for each possible combination of fibres $V_{g_1}$ and $W_{g_2}$. 

We now want to compute what these endomorphisms are. By the definition of the braiding (Definition \ref{STequivvbbraiding}), each of these maps $\Pi_{V_{g_1},W_{g_2}}$ is given by the composite of the evaluation and coevaluation for $i$ with, denoting by $\rho^i(g)$ the action of $G$ on the representation $i$,
$$
 V_{g_1}\otimes i\otimes i^* \otimes W_{g_2} \xrightarrow{\id_{V} \otimes \id_i\otimes \rho^{i^*}(g_2)\otimes\id_W}\otimes V_{g_1}\otimes i \otimes i^* \otimes W_{g_2}
$$
and
$$
 V_{g_1}\otimes i\otimes i^* \otimes W_{g_2} \xrightarrow{\id_{V}\otimes \rho^{i}(g_1)\otimes\id_i \otimes\id_W}V_{g_1}\otimes i \otimes i^* \otimes W_{g_2},
$$
where we have gotten rid of unnecessary switch maps between vector spaces. By unitarity of the representations, $\tn{ev} \circ( \id_i \otimes \rho^{i^*}(g_2))= \tn{ev} \circ (\rho^i(g_2^{-1})\otimes \tn{id}_{i^*})$. The evaluation and coevaluation combine to a trace, so we see that 
$$
\Pi_{V_{g_1},W_{g_2}}=\sum\limits_{i\in\tn{IrRep}(G)}\frac{d_i}{D}\tn{tr}(\rho^{i}(g_2^{-1})\rho^{i}(g_1))=\sum\limits_{i\in\tn{IrRep}(G)}\frac{d_i}{D}\chi_i(g^{-1}_2g_1),
$$  
where $\chi_i$ denotes the character of $i$. We recognise the right hand side as $\frac{1}{D}$ times the character of the group algebra, viewed as a representation of $G$, evaluated on $g_2^{-1}g_1$. As the group acts freely on itself, this character is $D$ times the characteristic function of the conjugacy class of the identity element. This proves the lemma.
\end{proof}

\begin{cor}\label{STtannidempsubobj}
The subobject associated to $\Pi_{V,W}$ has fibres
\begin{equation}\label{STidempotentfibres}
(V\otimes_\Pi W)_{g'} = \bigoplus_{g^2=g'} V_g\otimes W_g.
\end{equation}
\end{cor}

To compare the symmetric tensor product to the fibrewise product, we need to see what effect equipping this object with the half-braiding from Equation \eqref{SThalfbraid} has. We claim that this replaces $g^2$ by $g$. This will establish:

\begin{lem}\label{STtannidembraid}
Let $\mathcal{E}$ be the equivalence from Equation \eqref{STcentisbund}. For any $V,W\in \Vect_G[G]$ we have:
$$
\mathcal{E}(V\otimes_s W)= V\otimes_f W.
$$
\end{lem}
\begin{proof}
Unpacking the definition of the half-braiding, we see that the braiding on $V\otimes_s W$ with respect to $a\in \tn{Rep}(G)$ is given by, on each summand in Equation \eqref{STidempotentfibres},
$$
 a V_gW_g   \xrightarrow{ s_{V_g,a}\otimes \id_{W} \circ (\rho^a(g)\otimes \id_{V\otimes W})}  V_g a W_g \xrightarrow{s_{W_g,a}}  V_g W_g a,
$$
where the first map is the braiding from Equation \eqref{STequivvbbraiding} and the second the symmetry in $\cat{A}$. By monoidality of the symmetry $s$, this composite is the same as:
$$
a V_g W_g  \xrightarrow{s_{a,V_gW_g} \circ ( \rho^a(g)\otimes\id_{V\otimes W})}   V_g W_g a.
$$
Comparing this with Definition \eqref{STequivvbbraiding}, this is saying that $V\otimes_s W$ is the bundle with fibres 
$$
(V\otimes_s W)_g=V_g \otimes W_g,
$$
and this is what we wanted to show.
\end{proof}

Combining Corollary \ref{STtannidempsubobj} and Lemma \ref{STtannidembraid} now proves Theorem \ref{STsymprodtannafibthm}.

\subsubsection{Super-Tannakian Case}
We will now examine the case where $\cat{A}$ is super-Tannakian (Definition \ref{STfibrefunctorandtannadef}) and hence, by Deligne's Theorem \ref{STtannathm}, equivalent to $\Rep(G,\omega)$ for some finite super-group (Definition \ref{STsupergroup}). The structure of the Drinfeld centre in this case is discussed in Section \ref{STsupertannadc}. The Drinfeld centre is still $\Vect_{G}[G]$. However, the inclusion of $\Rep(G,\omega)$ into $\Vect_{G}[G]$ will be different, and the symmetric tensor product gives rise to a new tensor product on $\Vect_{G}[G]$. 

\begin{df}\label{STfibrewisesupertensordef}
	Let $(G,\omega)$ be a finite super-group. The \emph{fibrewise super-tensor product} of homogeneous (see Definition \ref{STevenodddefindc}) $V,W\in \Vect_{G}[G]$ with parities $|V|,|W|\in\{0,1\}$ is the $G$-equivariant vector bundle $V\otimes_{f}^{\omega} W$ with fibres
	$$
	(V\otimes_{f}^{\omega} W)_g=V_{\omega^{|W|}g}W_{\omega^{|V|}g},
	$$
	and $G$-action given by the tensor product of the $G$-actions.
\end{df}

\begin{rmk}
	We can interpret Definition \ref{STfibrewisesupertensordef} as follows: for every choice of central order 2 element of a finite group $G$, there is a symmetric tensor product on $\Vect_{G}[G]$.
\end{rmk}

In this section, we will prove the following:

\begin{thm}\label{STsymtenssupertannacase}
	Let $(G,\omega)$ be a finite super-group. Then the equivalence between $(\cat{Z}(\Rep(G,\omega)),\otimes_s)$ and $(\Vect_{G}[G],\otimes_f^{\omega})$ is symmetric monoidal.
\end{thm}

\begin{proof}
The main difficulty in proving this Theorem is that, as asserted by Proposition \ref{STsupertannainclfunctor}, the inclusion functor from $\Rep(G,\omega)$ to $\cat{Z}(\Rep(G,\omega))$ does not only hit bundles supported by $[e]$. This means we have revisit Lemma \ref{STcomputepitanna} and its proof. We will do this step by step below.

The starting point is again that $\Pi_{V,W}$ is given by
	\begin{equation}\label{STpivwsuper}
	\sum_{i\in \tn{IrRep}(G,\omega)} \frac{d_i }{D}
	\hbox{
		\begin{tikzpicture}[baseline=(current  bounding  box.center)]
		
		\coordinate (west) at (-1,0.4);
		\coordinate (north) at (0,1.1);
		\coordinate (east) at (1,0.4);
		\coordinate (south) at (0,-0.3);
		
		\node (a) at (-0.5,-1.2) {$V$};
		\node (b) at (0.5,-1.2) {$W$};
		
		\node (i) at (-1.250, 0.4){$i$};
		\node (id) at (1.25,0.4){$i^*$};
		
		\coordinate (ao) at (-0.5,2);
		\coordinate (bo) at (0.5,2);
		
		\begin{knot}[clip width=4]
		\strand [thick] (west)
		to [out=90,in=-180] (north)
		to [out=0,in=90] (east)
		to [out=-90,in=0] (south)
		to [out=-180,in=-90] (west);
		\strand [thick] (a) to (ao);
		\strand [thick] (b) to (bo);
		\flipcrossings{1,4}
		\end{knot}
		\end{tikzpicture}
	}.
	\end{equation}
Recall (see Definition \ref{STsupergroup}), that the $i$ are either even or odd, and that (Proposition \ref{STsupertannainclfunctor}) even representations are viewed as bundles supported by $[e]$, while odd representations are viewed as bundles supported by $[\omega]$. 

Each even $i$ summand in Equation \eqref{STpivwsuper} will, just as in the Tannakian case, contribute an automorphism of each $V_{g_1}\otimes W_{g_2}$ given by multiplication by $\chi_i(g_2^{-1}g_1)$, regardless of the parity of $V$ and $W$.

Now suppose that $i$ is odd. Since $\omega$ is the only element its conjugacy class, analogous reasoning to that applied in the Tannakian case tells us that for such odd $i$ we get an endomorphism of $V_{g_1}\otimes W_{g_2}$, let us denote it by
	$$
	\Pi_{V_{g_1},W_{g_2}}^i:V_{g_1}\otimes W_{g_2}\rar V_{g_1}\otimes W_{g_2}.
	$$
	We now want to compute what this map is. It is given by the composite of the appropriate evaluation and coevaluation with
	$$
	V_{g_1}\otimes i\otimes i^* \otimes W_{g_2} \xrightarrow{(-1)^{|V|}\id_{V} \otimes \id_i\otimes \rho^{i^*}(g_2)\otimes\id_W} V_{g_1}\otimes i \otimes i^* \otimes W_{g_2}
	$$
	and
	$$
	V_{g_1}\otimes i\otimes i^* \otimes W_{g_2} \xrightarrow{(-1)^{|W|}\id_{V}\otimes \rho^{i}(g_1)\otimes\id_i \otimes\id_W}V_{g_1}\otimes i \otimes i^* \otimes W_{g_2},
	$$
	where $|V|,|W|\in \{0,1\}$ denote the parity of $V$ and $W$ (by restricting to simple objects we can assume $V$ and $W$ to be homogeneous, see Lemma \ref{SThomogeneitysupertanna}). The signs come from the braiding between $V$ and $i$ and $W$ and $i^*$, respectively. From here, we can apply the same arguments as in the Tannakian case to arrive at:
	$$
	\Pi_{V_{g_1},W_{g_2}}=\sum\limits_{i\in\tn{IrRep}_{0}(G,\omega)}\frac{d_i}{D}\tn{tr}(\rho^{i}(g_2^{-1}g_1))+\sum\limits_{i\in\tn{IrRep}_{1}(G,\omega)}\frac{d_i}{D}(-1)^{|V|+|W|}\tn{tr}(\rho^{i}(g_2^{-1}g_1)),
	$$
	where we have denoted sets of representatives of the even and odd simple objects of $\Rep(G,\omega)$ by $\tn{IrRep}_0(G,\omega)$ and $\tn{IrRep}_1(G,\omega)$, respectively. Now, recall that, by definition, $\omega$ acts as $\id$ on even and as $-\id$ on odd $i$. This means that
	$$
	\chi_i(\omega^{|V|+|W|}g_2^{-1}g_1)=\begin{cases}
	\chi_i(g_2^{-1}g_1)&\tn{ for }i\tn{ even}\\
	(-1)^{|V|+|W|}\chi_i(g_2^{-1}g_1)&\tn{ for }i\tn{ odd}
	\end{cases}.
	$$
	We can use this to rewrite:
	$$
	\Pi_{V_{g_1},W_{g_2}}=\sum\limits_{i\in\tn{IrRep}(G)}\frac{d_i}{D}\chi_i(\omega^{|V|+|W|} g^{-1}_2g_1)=\begin{cases}
	\id&\tn{ for } g^{-1}_2g_1=\omega^{|V|+|W|}\\
	0 & \tn{ otherwise}
	\end{cases},
	$$
	which is the super version of Lemma \ref{STcomputepitanna}. This means that, analogous to Corollary \ref{STidempotentfibres}, we have:
	
	\begin{cor}
		The subobject associated to $\Pi_{V,W}$ is the equivariant vector bundle with fibres:
		$$
		(V\otimes_\Pi W)_{g'}=\bigoplus_{g^2=\omega^{|V|+|W|}g'}V_{\omega^{|V|+|W|}g}W_g=\bigoplus_{g^2=\omega^{|V|+|W|}g'}V_{\omega^{|W|}g}W_{\omega^{|V|}g},
		$$
		for $V$ and $W$ homogeneous.
	\end{cor}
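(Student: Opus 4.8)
The plan is to read off the fibres of the subobject directly from the explicit form of $\Pi_{V,W}$ just established, exactly as Corollary \ref{tannidempsubobj} was deduced from Lemma \ref{computepitanna} in the non-super Tannakian case. Recall that the subobject associated to an idempotent is its image (Section \ref{idemnotation}), and that $\Pi_{V,W}$ is a morphism in $\Vect_G[G]$, hence commutes with the action data. Therefore its image is an equivariant sub-bundle whose fibre over each $g'$ is the image of $\Pi_{V,W}$ on the corresponding fibre of $V\otimes_c W$, with action data obtained by restriction; this observation handles the $G$-equivariant structure for free and lets me concentrate on the underlying vector spaces.

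First I would write the fibre of the convolution product over $g'$ as $(V\otimes_c W)_{g'}=\bigoplus_{g_1g_2=g'}V_{g_1}\otimes W_{g_2}$ (Definition \ref{convtensonvb}), and invoke the super-analogue of Lemma \ref{computepitanna} derived immediately above: $\Pi_{V,W}$ acts as the identity on the summand $V_{g_1}\otimes W_{g_2}$ precisely when $g_2^{-1}g_1=\omega^{|V|+|W|}$, and as zero otherwise. Thus the image over $g'$ is the direct sum of those summands satisfying $g_1=\omega^{|V|+|W|}g_2$.

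Next I would impose the two constraints $g_1g_2=g'$ and $g_1=\omega^{|V|+|W|}g_2$ simultaneously. Substituting the second into the first and using that $\omega$ is central of order two (so $\omega^{-1}=\omega$) gives $g_2^2=\omega^{|V|+|W|}g'$; setting $g=g_2$ then yields the first displayed expression $\bigoplus_{g^2=\omega^{|V|+|W|}g'}V_{\omega^{|V|+|W|}g}W_g$. The second, more symmetric expression follows by reindexing the summation along the involution $g\mapsto\omega^{|V|}g$ of $G$: this sends $V_{\omega^{|V|+|W|}g}$ to $V_{\omega^{|W|}g}$ and $W_g$ to $W_{\omega^{|V|}g}$ (again using $\omega^2=e$ and centrality), while preserving the constraint since $(\omega^{|V|}g)^2=g^2$.

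The only genuine obstacle is the bookkeeping with the powers of $\omega$: one must verify that order-two and centrality make both the substitution in the constraint and the reindexing between the two displayed forms legitimate, and that $g\mapsto\omega^{|V|}g$ is a bijection of $G$ so that no summand is lost or double-counted. Everything else is a faithful transcription of the Tannakian argument, with the equivariant structure inherited directly from $V\otimes_c W$ because $\Pi_{V,W}$ is itself equivariant.
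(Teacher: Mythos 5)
Your proposal is correct and follows the same route as the paper, which simply reads off the fibres of the image from the diagonal form of $\Pi_{V,W}$ on the summands $V_{g_1}\otimes W_{g_2}$ established just above (the super-analogue of Lemma \ref{computepitanna}), exactly as Corollary \ref{tannidempsubobj} was obtained in the Tannakian case. Your index bookkeeping (solving $g_1=\omega^{|V|+|W|}g_2$ together with $g_1g_2=g'$, and the reindexing $g\mapsto\omega^{|V|}g$ using that $\omega$ is central of order two) is accurate and in fact more explicit than what the paper records.
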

As we can decompose any vector bundle into homogeneous summands, this Corollary completely determines the object underlying the symmetric tensor product of any two vector bundles.

Following the exposition of the Tannakian case, our next task is now to determine what the half-braiding (Equation \eqref{SThalfbraid}) is that we will equip this object with to form the symmetric tensor product. 

We will again compute what this braiding is summandwise. So, let $a\in \tn{Rep}(G,\omega)$ be homogeneous and $V_{\omega^{|V|+|W|}g}W_g$ be a summand in the fibre over $\omega^{|V|+|W|}g^2$. Unpacking the definition of the half-braiding, we get, analogously to the Tannakian case:
\begin{align*}
a V_{\omega^{|V|+|W|}g}W_g   \xrightarrow{(-1)^{|V||a|}\tau_{V,a}\otimes \id_W}  &V_{\omega^{|V|+|W|}g} a W_g \\
& \xrightarrow{(\id_{VW}\otimes \rho^a(g))\circ(\id_V \otimes \tau_{W,a})}  V_{\omega^{|V|+|W|}g} W_g a ,
\end{align*}
where $\tau$ denotes the switch map in vector spaces and, for readability, we have dropped the subscripts on $V$ and $W$ in writing down the map. The sign $ (-1)^{|V||a|}$ comes from the symmetry in $\Rep(G,\omega)$. This composes to:
	$$
	a V_{\omega^{|V|+|W|}g}W_g  \xrightarrow{(-1)^{|V||a|} (\id_{V\otimes W}\otimes  \rho^a(g))\circ\tau_{a,VW} }   V_{\omega^{|V|+|W|}g}W_g a.
	$$
Observe that:
$$
(-1)^{(|V|)|a|}\rho^a(g)=\rho^a(\omega^{|V|}g),
$$
so the half-braiding becomes (using naturality of the switch map):
$$
a V_{\omega^{|V|+|W|}g}W_g  \xrightarrow{\tau_{a,VW} \circ (\id_{V\otimes W}\otimes \rho^a(\omega^{|V|}g))}   V_{\omega^{|V|+|W|}g}W_g a.
$$
Now, comparing this with the definition of the half-braiding in $\Vect_G[G]$ (see Equation \eqref{STequivvbbraiding}), this indicates that $V_{\omega^{|V|+|W|}g}W_g$ is, in $V\otimes_s W$, a summand of the fibre over $\omega^{|V|}g$. We have found:
	$$
	(V\otimes_s W)_{\omega^{|V|}g}=V_{\omega^{|V|+|W|}g}  W_g.
	$$
or, reindexing:
$$
(V\otimes_s W)_{g}= V_{\omega^{|W|}g}W_{\omega^{|V|} g}.
$$

This concludes the proof of Theorem \ref{STsymtenssupertannacase}.
\end{proof}

\subsection*{Acknowledgements}
The author thanks the Engineering and Physical Sciences Research Council for the United Kingdom, the Prins Bernhard Cultuurfonds, the Hendrik Mullerfonds, the Foundation Vrijvrouwe van Renswoude and the Vreedefonds for their financial support. The author was supported by the by the Centre for Symmetry and Deformation at the University of Copenhagen (CPH-SYM-DNRF92) and Nathalie Wahl's European Research Council Consolidator Grant (772960).

Chris Douglas and Andr\'e Henriques have the author's thanks for their support and advice while preparing this document. The author is grateful to Ana Ros Camacho for her comments on a draft of this work, and would like to thank Michael M\"{u}ger and Ulrike Tillmann for helpful comments and suggestions. Thanks go to an anonymous referee for helpful and thorough comments.


\begin{thebibliography}{BDSPV15}
	
	\bibitem[Bar09]{Bartlett2009}
	Bruce Bartlett.
	\newblock {\em {On unitary 2-representations of finite groups and topological
			quantum field theory}}.
	\newblock PhD thesis, University of Sheffield, 2009.
	
	\bibitem[Bar16]{Bartlett2015}
	Bruce Bartlett.
	\newblock {Fusion categories via string diagrams}.
	\newblock {\em Communications in Contemporary Mathematics}, 18(5), 2016.
	
	\bibitem[BDSPV15]{Bartlett2015a}
	Bruce Bartlett, Christopher~L. Douglas, Christopher~J. Schommer-Pries, and
	Jamie Vicary.
	\newblock {Modular categories as representations of the 3-dimensional bordism
		2-category}.
	\newblock {\em arXiv preprint arXiv:1509.06811}, 2015.
	
	\bibitem[BGH{\etalchar{+}}17]{Bruillard2016}
	Paul Bruillard, C{\'{e}}sar Galindo, Tobias Hagge, Siu-Hung Ng, Julia~Yael
	Plavnik, Eric~C. Rowell, and Zhenghan Wang.
	\newblock {Fermionic modular categories and the 16-fold way}.
	\newblock {\em Journal of Mathematical Physics}, 58(4):041704, 2017.
	
	\bibitem[BK01]{Bakalov2001a}
	B~Bakalov and AA~Kirillov.
	\newblock {\em {Lectures on tensor categories and modular functors}}.
	\newblock American Mathematical Society, 2001.
	
	\bibitem[CE08]{Calaque2004}
	Damien Calaque and Pavel Etingof.
	\newblock {Lectures on tensor categories}.
	\newblock In Benjamin Enriquez, editor, {\em Quantum Groups}, chapter~1, pages
	1--38. European Mathematical Society Publishing House, Zuerich, Switzerland,
	1st edition, 2008.
	
	\bibitem[Del90]{Deligne1990}
	Pierre Deligne.
	\newblock {Cat{\'{e}}gories tannakiennes}.
	\newblock {\em The Grothendieck Festschrift}, II(87):111--195, 1990.
	
	\bibitem[Del02]{Deligne2002}
	Pierre Deligne.
	\newblock {Cat{\'{e}}gories tensorielles}.
	\newblock {\em Moscow Mathematical Journal}, 2(2):227--248, 2002.
	
	\bibitem[DGNO10]{Drinfeld2009}
	Vladimir Drinfeld, Shlomo Gelaki, Dmitri Nikshych, and Victor Ostrik.
	\newblock {On braided fusion categories I}.
	\newblock {\em Selecta Mathematica}, 16(1):1--119, 2010.
	
	\bibitem[ENO05]{Etingof2002}
	Pavel Etingof, Dmitri Nikshych, and Viktor Ostrik.
	\newblock {On fusion categories}.
	\newblock {\em Annals of Mathematics}, 162(2):581--642, 2005.
	
	\bibitem[HPT16]{Henriques2016}
	Andre~G. Henriques, David Penneys, and James Tener.
	\newblock {Categorified Trace for Module Tensor Categories over Braided Tensor
		Categories}.
	\newblock {\em Documenta Mathematica}, 21:1089--1149, 2016.
	
	\bibitem[JS86]{Joyal1986}
	Andr{\'{e}} Joyal and Ross Street.
	\newblock {Braided Monoidal Categories}.
	\newblock {\em Macquarie Math. Reports}, (860081), 1986.
	
	\bibitem[JS91a]{Joyal1991}
	Andr{\'{e}} Joyal and Ross Street.
	\newblock {The geometry of tensor calculus, I}.
	\newblock {\em Advances in Mathematics}, 88(1):55--112, jul 1991.
	
	\bibitem[JS91b]{Joyal1991a}
	Andr{\'{e}} Joyal and Ross Street.
	\newblock {Tortile Yang-Baxter operators in tensor categories}.
	\newblock {\em Journal of Pure and Applied Algebra}, 71(1):43--51, apr 1991.
	
	\bibitem[Kas95]{Kassel1995}
	Christian Kassel.
	\newblock {Braidings}.
	\newblock In J.H. Ewing, F.W. Gehring, and P.R. Halmos, editors, {\em Quantum
		Groups}, chapter~13, pages 199--238. Springer, New York, NY, 1st edition,
	1995.
	
	\bibitem[Maj91]{Majid1991}
	Shahn Majid.
	\newblock {Representations, duals and quantum doubles of monoidal categories}.
	\newblock In {\em Proceedings of the Winter School "Geometry and Physics"},
	volume II.-Supple, pages 197--206. Circolo Matematico di Palermo(Palermo),
	1991.
	
	\bibitem[M{\"{u}}g03a]{Muger2003a}
	Michael M{\"{u}}ger.
	\newblock {From subfactors to categories and topology II: The quantum double of
		tensor categories and subfactors}.
	\newblock {\em Journal of Pure and Applied Algebra}, 180(1):159--219, 2003.
	
	\bibitem[M{\"{u}}g03b]{Mueger2002}
	Michael M{\"{u}}ger.
	\newblock {On the Structure of Modular Categories}.
	\newblock {\em Proceedings of the London Mathematical Society},
	87(02):291--308, 2003.
	
	\bibitem[Sel10]{Selinger2010}
	P.~Selinger.
	\newblock {A Survey of Graphical Languages for Monoidal Categories}.
	\newblock In {\em Springer Lecture Notes in Physics}, volume 813, pages
	289--355. Springer, Berlin, Heidelberg, 2010.
	
	\bibitem[Tha20]{Tham2019}
	Ying~Hong Tham.
	\newblock {Reduced Tensor Product on the Drinfeld Center}.
	\newblock {\em arXiv preprint arXiv: 2004.09611}, 2020.
		
	\bibitem[Was19a]{Wasserman2017b}
	Thomas~A. Wasserman.
	\newblock {Drinfeld Centre-Crossed Braided Tensor Categories}.
	\newblock {\em To Appear in Higher Structures}.
	
	\bibitem[Was19b]{Wassermanc}
	Thomas~A. Wasserman.
	\newblock {The Reduced Tensor Product of Braided Tensor Categories Containing a
		Symmetric Fusion Category}.
	\newblock {\em arXiv preprint arXiv: 1910.13562}, 2019.
	
	
	\bibitem[Was20]{Wasserman2017a}
	Thomas~A. Wasserman.
	\newblock {The Drinfeld Centre of a Symmetric Fusion Category is 2-Fold
		Monoidal}.
	\newblock {\em Advances in Mathematics}, 366:107090, 2020.
\end{thebibliography}

\newcommand{\etalchar}[1]{$^{#1}$}

\end{document}